\newtheorem{theorem}{Theorem}[section]
\newtheorem{assumption}[theorem]{Assumption}
\newtheorem{lemma}[theorem]{Lemma}
\newtheorem{corollary}[theorem]{Corollary}
\newtheorem{proposition}[theorem]{Proposition}
\newtheorem{remark}[theorem]{Remark}
\newcommand{\EE}{\mathbb{E}}
\newcommand{\NN}{\mathbb{N}}
\newcommand{\N}{\mathbb{N}}
\newcommand{\PP}{\mathbb{P}}
\newcommand{\RR}{\mathbb{R}}
\newcommand{\sphere}{\mathbb S}
\newcommand{\dd}{\mathrm{d}}
\newcommand{\cF}{\mathcal{F}}
\newcommand{\cN}{\mathcal{N}}
\newcommand{\cB}{\mathcal{B}}
\newcommand{\cS}{\mathcal{S}}
\newcommand{\bigO}{\mathcal{O}}
\newcommand{\scp}[2]{\left\langle{#1}, {#2}\right\rangle}
\DeclarePairedDelimiter{\abs}{\vert}{\vert}
\DeclarePairedDelimiter{\norm}{\|}{\|}
\let\dim\relax
\DeclareMathOperator{\dim}{dim}
\DeclareMathOperator{\sign}{sign}
\DeclareMathOperator*{\argmax}{argmax}
\newcommand{\AV}{\mathchoice
    {A\hspace{-2.75pt}\mbox{\rotatebox[]{-1}{\raisebox{0.09mm}{$V$}}}}
    {A\hspace{-2.75pt}\mbox{\rotatebox[]{-1}{\raisebox{0.09mm}{$V$}}}}
    {A\hspace{-2.5pt}\mbox{\rotatebox[]{-1}{\raisebox{0.09mm}{$\scriptstyle V$}}}}
    {A\hspace{-2.45pt}\mbox{\rotatebox[]{-1}{\raisebox{0.09mm}{$\scriptscriptstyle V$}}}}
}
\newcommand{\adj}[1]{ #1}
\title{Computing adjoint mismatch of linear maps}
\author{Jonas Bresch\thanks{Institute of Mathematics, Technical University Berlin, Straße des 17. Juni 136, 10623 Berlin, Germany,
\href{mailto:bresch@math.tu-berlin.de}{bresch@math.tu-berlin.de}. \url{https://www.tu.berlin/imageanalysis}.} \and Dirk A. Lorenz\thanks{Center for Industrial Mathematics, Fachbereich 3, University of Bremen, Postfach 33 04 40, 28334 Bremen, Germany, \href{mailto:d.lorenz@uni-bremen.de}{d.lorenz@uni-bremen.de}.} \and Felix Schneppe\thanks{Center for Industrial Mathematics, Fachbereich 3, University of Bremen, Postfach 33 04 40, 28334 Bremen, Germany, \href{mailto:schneppe@uni-bremen.de}{schneppe@uni-bremen.de}.} \and Maximilian Winkler\thanks{Center for Industrial Mathematics, Fachbereich 3, University of Bremen, Postfach 33 04 40, 28334 Bremen, Germany, \href{mailto:maxwin@uni-bremen.de}{maxwin@uni-bremen.de}.}}
\begin{document}
\maketitle

\begin{abstract}
    This paper considers the problem of detecting adjoint mismatch 
    for two linear maps. 
    To clarify, 
    this means that we aim to calculate the operator norm for the difference of two linear maps,
    where for one we only have a black-box implementation 
    for the evaluation of the map, and for the other we only have a black-box for the evaluation of the adjoint map.        
    We give \adj{a} stochastic \adj{algorithm} for which we prove the almost sure convergence to the operator norm.
    The algorithm is a random search method for a generalization of the Rayleigh quotient and uses optimal step sizes.
    Additionally, 
    a convergence analysis is done for the corresponding singular vector 
    and the respective eigenvalue equation.
\end{abstract}

\noindent\textbf{AMS Classification:}
65F35, %  	Numerical computation of matrix norms, conditioning, scaling
15A60, %  	Norms of matrices, numerical range, applications of functional analysis to matrix theory
68W20 %  	Randomized algorithms

\noindent\textbf{Keywords:}
    spectral norm, 
    operator norm,
    stochastic algorithm,
    stochastic gradient method,
    adjoint mismatch

\section{Introduction}
\label{sec:intro}

In this paper we consider the problem 
of the computation of the operator norm $\norm{A-V}$ for the difference of two linear maps $A$ 
and $V$ between finite dimensional real Hilbert spaces \adj{of dimension $d$ and $m$, respectively,} under restrictive conditions:
First, 
we assume that the map $A$ is only available through a black-box oracle that returns $Av$ for a given input $v$ 
and for $V$
we assume to have only access to a black-box oracle that returns $V^*u$ for given input $u$.
Moreover, we consider that it is not possible to store a large number of vectors (of either the input and the output dimension),
which means that we assume that the method has a storage requirement of $\bigO(\max\{m,d\})$.
Finally, we are interested in a method that is able to approximate the value $\norm{A-V}$ 
to any accuracy and also is able to make further incremental improvements if the output at some stage is not good enough.

Concretely, the question of how to approximate $\norm{A-V}$
under the above conditions is relevant in the field 
of computerized tomography where the forward projection 
and its adjoint have to be discretized \cite{buzug2008ct}.
Usually, the forward projection and its adjoint, 
often called backprojection, are discretized in\-de\-pen\-dent\-ly, 
and both maps are not available as matrices as this would quickly surpass 
the available memory for practically relevant cases.
Moreover, 
it turns out that efficient discretizations of the forward projection 
and the backprojection usually lead to maps which are not adjoint to each other, 
see, e.g., the discussion in \cite{bredies2021convergence}.
Consequently, 
there is an interest in solving reconstruction problems 
in computerized tomography by iterative methods under adjoint mismatch~\cite{lorenz2023chambolle,chouzenoux2023convergence,Savanier2021ProximalGA,Chouzenout2021pgm-adjoint,lorenz2018randomized,zeng2000unmatched}. In \cite{lorenz2023chambolle}, 
it is shown that an erroneous adjoint operator in the Chambolle-Pock method 
may still lead to a convergent method but both the error in the solution 
as well as the conditions for convergence depend on the quantity $\norm{A-V}$.

We are not aware of any method to compute $\norm{A-V}$ 
under our assump\-tions and propose a randomized method for this task in this paper.
In the special case of $V=0$, 
an algorithm with the same requirements has been proposed in \cite{BLSW24}, 
that is, for successive approximation of $\|A\|$ without using the adjoint operator $A^*$.
The corresponding method is in the spirit of stochastic gradient ascent.  
\adj{In 
\cite{BMSS25}, the latter approach was generalized to the maximal Rayleigh quotient
$\scp{v}{Av}/\scp{v}{Bv}$
with respect to a Hermitian positive definite linear map 
$B$.}

A standard method to compute operator norm $\norm{L}$ for linear maps $L$ 
is the power method \cite{mises1929} applied to $L^*L$. 
Its convergence rate depends on the spectral gap,
i.e.~the distance between the two largest eigenvalues of $L^{*}L$.
Other methods for this task rely on methods to compute singular values and use
Krylov subspaces \cite{krylov1921}
or the Lanczos method \cite{lanczos1950}.
However, all these methods require the adjoint $L^{*}$.
Sketching methods \cite{martinsson2002randomized}
for the Schatten $p$-norm for even $p$ or in \cite{magdon2011note} for the spectral norm 
(which is the Schatten $\infty$-norm)
lead to lower bounds with probabilistic guarantees, 
however, they need to store a considerable amount of vectors of the size of $L v$, 
which exceeds the storage requirements in this paper.
In~\cite{kuczynski1992estimating}
an (adjoint-free) alternative to the power iteration 
is proposed for maps for which the input and output dimension are equal.
Also under this assumption,
in \cite{kenney1998} they focus on condition estimation
of the operator norm 
and in \cite{dudley2022} they use Monte Carlo techniques for Schatten-$p$ norms.
Methods to solve linear system and
which avoid using adjoints (or transposes) exist since the 1990s
and are called ``adjoint-free (or transpose-free) methods''~\cite{chan1998transpose}, 
e.g. the transpose-free quasi minimal residual method (TFQMR) from~\cite{freund1993tfqmr}.
Some other methods of this type are considered in \cite{brezinski1998transpose,
chan1998transpose,
sonneveld1989cgs,
lorenz2023randomdescent}.
Moreover,
recent work suggest to learn adjoints~\cite{boulle2024operator} 
but this leads to approximations errors that are hard to quantify.

% Contribution
\subsection{Contribution}
\label{sec:contribution}

The contribution of this work is:
\begin{itemize}
\item We propose \adj{a} stochastic \adj{algorithm} 
producing a sequence that con\-ver\-ges almost surely to the operator norm $\norm{A-V}$
for two linear maps $A,V:\RR^{d}\to\RR^{m}$
only using evaluations of the operators $A$ and $V^*$, and calculation of norms and inner products.
\item Our \adj{method works} for any difference of two linear maps between finite dimensional Hilbert spaces, 
i.e. we do not need any assumptions on the dimension of the spaces or on the structure of the linear maps. 
We restrict ourselves to the Euclidean spaces 
for the input- and output space of the operators,
equipped with the Euclidean norm 
and the standard inner product for simplicity.
\item The storage requirement of the \adj{algorithm} we propose is $\bigO(\max\{m,d\})$ 
which is the minimal amount possible, 
more concretely, we only need to store two vectors of the input di\-men\-sion $d$ 
and two vectors of the output dimension $m$ during the iteration.
\end{itemize}
Additionally, 
our \adj{method} produces an approximation of a pair of right and left singular vectors 
for the largest singular value \adj{of the difference $A-V$}
and can be extended to compute further singular vectors.

\subsection{Outline}
\label{sec:outline}

In Section~\ref{sec:architecture} we present the general architecture of the method.
\adj{In Section~\ref{sec:two_stepsizes},
we introduce and discuss our stochastic algorithm and the almost sure existence is proven in Section~\ref{sec:as_negative_cases}. 
The proof of almost sure convergence is done in Section~\ref{sec:convergence_as_two}}.
Finally,
in Section~\ref{sec:convergence_analysis},
we \adj{show results on the convergence rate}.
\adj{In Section~\ref{sec:numerical-experiments},
we study the performance of our proposed method numerically
and compare it to \cite{BLSW24} when $V = 0$. Finally, we provide adjoint mismatch calculations for different implementations of the  Radon transform.}

\subsection{Notation}
\label{sec:notation}

We use $\norm{x}$ and $\scp{x}{y}$ for the standard Euclidean vector
norm and inner product, respectively, $\norm{A}$ for the induced
operator norm of a map $A$, and $I_{d}$ for the identity map on
$\RR^{d}$. The kernel of an operator $A$ is denoted by $\ker(A)$. With
$\cN(\mu,\Sigma)$ we denote the normal distribution with mean $\mu$ and
covariance $\Sigma$ and with
$\sphere^{d-1} \coloneqq \{x\in\RR^{d}\mid \norm{x}=1\}$ we denote the unit
sphere in $d$-dimensional space. 
The uniform distribution on some \adj{compact} set $C$ 
is denoted by $\mathcal{U}_{C}$.
With $\PP(B)$ we denote the
probability of an event $B$ (and the underlying distribution will be
clear from the context).
Moreover $\sigma_{k}(A)$ will always denote the
$k$-th largest singular value of the linear map $A$ (and sometimes the
argument $A$ will be dropped), and especially we use
$\sigma_{1} = \norm{A}$. For a vector $v \in \RR^d$, we denote by
$\{v\}^\perp \coloneqq \left\{ w \in \RR^d \mid \scp{w}{v}=0 \right\}$ the
hyperplane orthogonal to $v$.
% and by
% $B_{\varepsilon}(v) \coloneqq \left\{ w \in \RR^d \mid \norm{w-v}<\varepsilon \right\}$ is the
% $\varepsilon$-ball around $v$.  
By $A^*$, we denote the adjoint operator of
$A$ defined by $\scp{x}{A^{*}y} = \scp{Ax}{y}$. 
We will occasionally
abbreviate ``almost surely'' with a.s. and mean that the described
event will occur with probability one. 
By $\mathbbm{1}_{B}$ we denote the characteristic function of a
set $B$.

\section{Calculation of $\norm{A-V}$}
\label{sec:adjoint-free-A-V}

We recall that for a linear map $\adj{L}$ from
$\RR^d$ to $\RR^m$ (both equipped with some inner products $\scp{\cdot}{\cdot}$ 
and corresponding norms $\norm{\cdot}$) it holds that 
\label{prop:optimal_pair}
\begin{align*}
    \norm{L}
    = 
    \max_{\norm{v} = 1} \norm{\adj{L} v}
    = 
    \max_{\norm{v} = \norm{u} = 1} \scp{u}{\adj{L} v}
\end{align*}
and the critical points  of the latter optimization problem
are the pairs of nor\-mal\-ized left and right singular vectors 
of $\adj{L}$ to a common singular value.
If the singular value $\sigma$ of $L$ has multiplicity $r\leq d$,
i.e. 
\begin{equation*}
    E_\sigma \coloneqq \{v \in \RR^d \mid L^*L v = \sigma^2 v\}
\end{equation*}
has dimension $r$,
then it holds that 
\begin{equation}
    \label{eqn:E_sigma}
  \tilde E_\sigma 
  \coloneqq 
  \left\{
  \begin{bmatrix} u \\ v \end{bmatrix} \in \RR^{m+d}
  \, \middle|\, 
  \begin{bmatrix} 0 & \adj{L} \\ \adj{L}^* & 0 \end{bmatrix}\begin{bmatrix} u \\v \end{bmatrix} = \sigma \begin{bmatrix} u \\v \end{bmatrix}
  \right\}
\end{equation}
has dimension $r$, too.
Moreover, we have $\tilde E_\sigma |_{\RR^d} = E_\sigma$.
The multiplicities of the nonzero singular values
for $\adj{L}$ and $\adj{L}^*$ are the same
and the only multiplicity
which differs is the one of the zero singular value
when $m \neq d$.
Moreover,
if $\{v_i\}_{i = 1}^r$ is an orthonormal basis 
of $E_\sigma$
then 
\begin{equation*}
    \left\{\tfrac{1}{\sqrt{2}}\left[\begin{matrix}
        \tfrac{\adj{L} v_i}{\norm{\adj{L} v_i}} \\ v_i
    \end{matrix}\right]\right\}_{i = 1}^r
\end{equation*} 
is an orthonormal basis of $\tilde E_\sigma$.

\subsection{Architecture of the Algorithm}
\label{sec:architecture}

The starting point for our algorithm is the formulation 
\begin{align}\label{eq:objective-uv}
    \|A - V\| 
    = 
    \max_{\substack{u \in \RR^{\adj{m}}, u\neq 0,\\ v \in \RR^{\adj{d}}, v \neq 0}} \frac{\langle u, (A - V) v\rangle}{\|u\| \cdot \|v\|}
\end{align}
and our construction of algorithms follows the idea in \cite[Alg.~1]{BLSW24}.
\begin{enumerate}
\item \label{ite:1}
  Initialize uniformly at random $u^{0} \sim \mathcal U_{\sphere^{m-1}}$
  and $v^{0} \sim \mathcal U_{\sphere^{d-1}}$, check if $\scp{u^{0}}{Av\adj{^{0}}} - \scp{V^{*}u^{0}}{v^{0}}$ is positive, if not, flip the sign of $u^{0}$.
\item \label{ite:2}
  Independently sample the search directions $x^{k}, w^{k}$
  that are uniformly distributed on the unit sphere \adj{in the} tangent space 
  of $v^{k}$, and $x^{k}$, respectively.
\item \label{ite:3}
  Then calculate the pair of optimal step sizes $\tau^{k}, \xi^k$ by
    \footnote{The maximizer might be not attained, in general;
      we will prove that this happens with probability zero, 
      see Proposition~\ref{prop:first_order_optimality},
      Proposition~\ref{prop:one_reduced_step_size},
      Proposition~\ref{prop:factos_alomst_surely_not_zero},
      and finally Corollary~\ref{cor:normdistance-2-no-sing-vectors-during-iteration}.}
    \begin{align}
      \label{eq:argmax_two}
      (\tau_k, \xi_k) 
      = \argmax_{\tau, \xi} 
      \frac{\scp{u^k + \tau w^k}{(A - V)(v^k + \xi x^k)}^2}{\norm{u^k + \tau w^k}^2 \cdot \norm{v^k + \xi x^k}^2}
    \end{align}
    and update the iterates $u^{k+1} = \tfrac{u^k + \tau_k w^k}{\norm{u^k + \tau_k w^k}}$
    and $v^{k+1} = \tfrac{v^k + \xi_k x^k}{\norm{v^k + \xi_k x^k}}$. 
\end{enumerate}

\begin{remark}[Non-negativity of objective]\label{rem:nonneg-objective}
  The initialization
  and the ascent of the objective in each iteration ensures that we always have 
  \begin{align*}
    \adj{\scp{u^k}{(A - V) v^k}
    =} \scp{u^{k}}{Av^{k}} - \scp{V^{*}u^{k}}{v^{k}}
    \ge 0
  \end{align*}
  throughout the iteration.
\end{remark}

\begin{remark}[Sampling of $x^k$ and $w^{k}$ {\cite[Rem.~2.3]{BLSW24}}]   \label{rem:distribution-x-w}
  To obtain the samples $x^{k}$ and $w^{k}$ 
  we can  sample $y^{k}\sim \cN(0,I_{d})$ and $z^{k}\sim \cN(0,I_{m})$ and set
  \begin{align*}
    x^{k} 
    & = \frac{y^{k} - \scp{y^{k}}{v^{k}}v^{k}}{\norm{y^{k} - \scp{y^{k}}{v^{k}}v^{k}}}
    % = \tfrac{y^{k} - \scp{y^{k}}{v^{k}}v^{k}}{\sqrt{\norm{y^{k}}^2 - \scp{y^{k}}{v^{k}}^2}}\\
    \quad\text{and}\quad
    w^{k} 
    = \frac{z^{k} - \scp{z^{k}}{u^{k}}u^{k}}{\norm{z^{k} - \scp{z^{k}}{u^{k}}u^{k}}}
  \end{align*}
  % (see \cite[Rem.~2.3]{BLSW24}).
\end{remark}

\adj{Now, we analyze} the step size choice in \eqref{eq:argmax_two},
for the two independent directions  $w^{k} \in \sphere^{m-1}$ 
and $x^{k} \in \sphere^{d-1}$.
To simplify notation, 
we define 
the function $q : \RR^2 \to \RR$ measuring the square-root of the objective value in \eqref{eq:argmax_two}
\begin{align}
    \label{eq:q_function_two_stepsizes}
    q_{k}(\tau, \xi) 
    & \coloneqq
    \frac{\scp{ (u^{k} + \tau w^{k})}{(A - V) (v^{k} + \xi x^{k})}}{\norm{u^{k} + \tau w^{k}} \cdot \norm{v^{k} + \xi x^{k}}}.
\end{align}

We introduce the following parameters \adj{in each iteration $k \in \NN$}
\begin{align}
\label{eq:q_k_abcd_k}
\begin{tabular}{l@{\hskip2pt}l @{\hskip1cm} l@{\hskip2pt}l}
    $a_k$ & $\coloneqq \scp{u^k}{(A - V) v^k}$ & $b_k$ & $\coloneqq\scp{w^k}{(A - V) v^k}$ \\
    & $\,= \scp{u^k}{Av^k} - \scp{V^*u^k}{v^k},$ &  & $\,=\scp{w^k}{Av^k} - \scp{V^*w^k}{v^k},$ \\
    \\
    $c_k$ & $\coloneqq \scp{u^k}{(A - V) x^k}$ & $d_k$ & $\coloneqq\scp{w^k}{(A - V) x^k}$ \\
    & $\,= \scp{u^k}{Ax^k} - \scp{V^*u^k}{x^k},$ &  & $\,=\scp{w^k}{Ax^k} - \scp{V^*w^k}{x^k},$ \\
\end{tabular}
\end{align}
which can be evaluated under our restrictive assumptions for the two linear maps.
Hence, 
under the stated normalization and orthogonality assumptions, i.e. $\norm{u}=\norm{v} = \norm{w} = \norm{x}=1$, 
$w \in \{u\}^\perp$ and $x \in \{v\}^\perp$
the function in \eqref{eq:q_function_two_stepsizes} can be written as
\begin{align}
    \label{eq:q_function}
    q_k(\tau, \xi) 
    \coloneqq 
    \frac{a_k + b_k \tau + c_k \xi + d_k \tau\xi}{\sqrt{1 + \tau^2}\sqrt{1 + \xi^2}}.
\end{align}
Notice, 
that by the interest of evaluating the (positive) operator norm,
we first restrict our self to the optimality of \eqref{eq:q_function} (which comes from~\eqref{eq:q_function_two_stepsizes}),
and later use the squared objective \eqref{eq:argmax_two} for further convergence analysis.
Both formulations are equivalent since our initialization guarantees that the objective is always non-negative, see Remark~\ref{rem:nonneg-objective}.
\adj{The proposed method is summarized in Algorithm~\ref{alg:OpNorm3}.}

\begin{algorithm}[t!]
\caption{Matrix- and adjoint free norms for $\|A - V\|$ (two step sizes)}\label{alg:OpNorm3}
\begin{algorithmic}[1]
\State \textbf{Initialize} vectors $v^{0}\in\RR^{d}, u^{0}\in\RR^m$ with $\norm{v^{0}} = \norm{u^{0}}=1$ such that $\scp{u^{0}}{Av^{0}} - \scp{V^{*}u^{0}}{v^{0}}\geq 0$
\For{$k=0,1,2,\dots$}
  \State \textbf{Sample} $y^{k}\sim \cN(0,I_{d})$ and $z^{k}\sim \cN(0,I_{m})$
  \State \textbf{Project}
  \begin{align}
    \label{eq:uv-update-step2}
    x^{k} 
    & = \frac{y^{k} - \scp{y^{k}}{v^{k}}v^{k}}{\norm{y^{k} - \scp{y^{k}}{v^{k}}v^{k}}}
    \quad\text{and}\quad
    w^{k} 
    = \frac{z^{k} - \scp{z^{k}}{u^{k}}u^{k}}{\norm{z^{k} - \scp{z^{k}}{u^{k}}u^{k}}}
\end{align}
 and calculate
\begin{align*}
    a_{k} & = \langle u^{k}, A v^{k}\rangle - \langle V^* u^{k}, v^{k}\rangle, &
    b_k & = \langle w^{k}, A v^{k}\rangle - \langle V^* w^{k}, v^{k}\rangle, \\
    c_k & = \langle u^{k}, A x^{k}\rangle - \langle V^* u^{k}, x^{k}\rangle,&
    d_k & = \langle w^{k}, A x^{k}\rangle - \langle V^* w^{k}, x^{k}\rangle.
\end{align*}
\State \textbf{Calculate} step sizes $\tau_{k}$ and $\sigma_{k}$ as 
\Comment{Prop.~\ref{prop:first_order_optimality} and Prop.~\ref{prop:one_reduced_step_size}}
\begin{align*}
    (\tau_{k}, \xi_k)
    & = 
    \argmax_{\tau, \xi} \frac{\langle u^{k} 
    + \tau w^{k}, (A - V)(v^{k} + \xi x^{k})\rangle^2}{\norm{u^{k} + \tau w^{k}}^2 
    \cdot \norm{v^{k} + \xi x^{k}}^2}
\end{align*}
%\State
with $e_k \coloneqq a_kb_k + c_kd_k$ and $f_k \coloneqq a_k^2 + c_k^2 - b_k^2 -d_k^2$ as
\begin{align*}
    \tau_k \coloneqq -\text{sign}(e_k)\left(\frac{f_k}{2|e_k|} - \sqrt{\frac{f_k^2}{4e_k^2} + 1}\right),
    \quad
    \xi_k \coloneqq \frac{c_k + \tau_k d_k}{a_k + \tau_k b_k}.
\end{align*}
\State \textbf{Update}
\begin{align}
    \label{eq:v-update-step2}
    v^{k+1} 
    & = \tfrac{v^{k}+ \xi_k x^{k}}{\sqrt{1+\xi_{k}^{2}}} \qquad
    \text{and}\quad
    &
    u^{k+1} 
    & = \tfrac{u^{k}+ \tau_k w^{k}}{\sqrt{1+\tau_{k}^{2}}}
\end{align}
\EndFor
\State \textbf{Return} estimate $\norm{A - V} \approx \scp{u^{k}}{Av^{k}} - \scp{V^*u^k}{v^k}$
%\EndProcedure
\end{algorithmic}
\end{algorithm}

%\FloatBarrier

\subsection{On the optimal step sizes}
\label{sec:two_stepsizes}

We give a first result concerning the optimality criteria 
for the optimal pair of step sizes for the function $q_k(\tau,\xi)$ \adj{from \eqref{eq:q_function}},
say $(\tau^*_k, \xi^*_k)$,
in the $k$-th iteration. 

\begin{proposition}
    \label{prop:first_order_optimality}
    The first order optimality criteria for $(\tau, \xi)$ in \eqref{eq:argmax_two}
    within the $k$-th iteration
    are given by 
    \begin{align*}
        0 = b_k + \xi_k^* d_k - \tau_k^*(a_k + \xi_k^* c_k),
        \quad
        0 = c_k + \tau_k^* d_k - \xi_k^*(a_k + \tau_k^* b_k).
    \end{align*}
    Furthermore, 
    if we assume that $a_k\neq 0$, $a_k b_k + c_k d_k\neq 0$, $a_k c_k + b_k d_k \neq 0$ and $a_k d_k-b_k c_k \neq 0$, 
    the first order optimality criteria for \eqref{eq:argmax_two}
    are fulfilled exactly for
    \begin{align*}
        \tau_{k,\pm}^* 
        = 
        - \frac{a_k^2 - b_k^2 + c_k^2 - d_k^2}{2(a_kb_k + c_kd_k)}
        \pm \sqrt{\frac{(a_k^2 - b_k^2 + c_k^2 - d_k^2)^2}{4(a_kb_k + c_kd_k)^2} + 1}
    \end{align*}
    and 
    \begin{align*}
        \xi_{k,\pm}^*
        = 
        - \frac{a_k^2 + b_k^2 - c_k^2 - d_k^2}{2(a_kc_k + b_kd_k)}
        \pm \sqrt{\frac{(a_k^2 + b_k^2 - c_k^2 - d_k^2)^2}{4(a_kc_k + b_kd_k)^2} + 1}.
    \end{align*}
\end{proposition}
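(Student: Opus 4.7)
The plan is to compute $\nabla q_k$ from \eqref{eq:q_function} and reduce $\nabla q_k = 0$ to a single quadratic equation. Since $q_k^2$ appears in \eqref{eq:argmax_two} but Remark~\ref{rem:nonneg-objective} keeps $q_k \ge 0$ throughout the iteration, any critical point with $q_k > 0$ is also a critical point of $q_k$ itself, so the condition $\nabla(q_k^2) = 0$ reduces to $\nabla q_k = 0$ away from the trivial locus $\{q_k = 0\}$.

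Writing $N(\tau,\xi) \coloneqq a_k + b_k\tau + c_k\xi + d_k\tau\xi$, I would compute
\begin{equation*}
\partial_\tau q_k(\tau,\xi) = \frac{(b_k + d_k\xi)(1+\tau^2) - \tau\, N(\tau,\xi)}{(1+\tau^2)^{3/2}\sqrt{1+\xi^2}}.
\end{equation*}
The $\tau^2$-terms in the numerator cancel, so $\partial_\tau q_k = 0$ simplifies to the first claimed condition $b_k + \xi d_k - \tau(a_k + \xi c_k) = 0$; the second condition follows from the symmetry $\tau \leftrightarrow \xi$, $b_k \leftrightarrow c_k$ of $q_k$ applied to $\partial_\xi q_k$.

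To obtain the explicit formulas I would solve the second (linear in $\xi$) equation for $\xi = (c_k + \tau d_k)/(a_k + \tau b_k)$ and substitute into the first. After multiplying through by $a_k + \tau b_k$, the cubic and constant contributions in $\tau$ collect into the quadratic
\begin{equation*}
(a_k b_k + c_k d_k)\tau^2 + (a_k^2 + c_k^2 - b_k^2 - d_k^2)\tau - (a_k b_k + c_k d_k) = 0,
\end{equation*}
whose discriminant $(a_k^2 + c_k^2 - b_k^2 - d_k^2)^2 + 4(a_k b_k + c_k d_k)^2$ is strictly positive under $a_k b_k + c_k d_k \neq 0$. The quadratic formula then yields the two values $\tau^*_{k,\pm}$, and an analogous elimination of $\tau$ (using the other linear equation) produces $\xi^*_{k,\pm}$.

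The main obstacle will be accounting for the four nonvanishing hypotheses. Specifically, $a_k b_k + c_k d_k \neq 0$ and $a_k c_k + b_k d_k \neq 0$ are exactly the leading coefficients of the two reduced quadratics and are needed for nondegeneracy; $a_k \neq 0$ excludes $(0,0)$ as a critical point and, together with the remaining assumptions, prevents the substitution denominators $a_k + \tau b_k$ and $a_k + \xi c_k$ from vanishing at a root; finally $a_k d_k - b_k c_k \neq 0$ ensures that distinct $\xi$-roots correspond under the substitution to distinct $\tau$-roots, so that the pairing $\tau^*_{k,\pm} \leftrightarrow \xi^*_{k,\pm}$ is well defined and no spurious solutions are introduced by cross-multiplication. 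These checks are routine but the bookkeeping is the least transparent part of the argument.
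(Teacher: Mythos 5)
Your proposal is correct and follows essentially the same route as the paper: compute $\partial_\tau q_k$ and $\partial_\xi q_k$ (reducing to $q_k$ rather than $q_k^2$ away from the zero set), observe the cancellation that yields the two bilinear optimality conditions, eliminate one variable via $\xi=(c_k+\tau d_k)/(a_k+\tau b_k)$, and solve the resulting quadratic $(a_kb_k+c_kd_k)\tau^2+(a_k^2-b_k^2+c_k^2-d_k^2)\tau-(a_kb_k+c_kd_k)=0$. The only part you leave as a sketch --- showing that $a_k+\tau b_k=0$ or $a_k+\xi c_k=0$ at a critical point forces $a_k=0$ or $b_kc_k-a_kd_k=0$, contradicting the hypotheses --- is exactly the case analysis the paper carries out explicitly, so your outline matches the published argument.
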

\begin{proof}
  We analyze the critical points $(\tau,\xi)$ of $q_{k}$ since they are are also critical points of $q_{k}^{2}$:
    \begin{align*}
        \partial_\tau q_k (\tau, \xi)
        & = 
        \tfrac{(b_k + \xi d_k)(1 + \tau^2) - \tau(a_k + \tau b_k + \xi c_k + \tau\xi d_k)}{\sqrt{1 + \tau^2}^3\sqrt{1 + \xi^2}}
        = 
        \tfrac{b_k + \xi d_k - \tau(a_k + \xi c_k)}{\sqrt{1 + \tau^2}^3\sqrt{1 + \xi^2}}
    \end{align*}
    Hence, we have
    \begin{align}
        0 = \partial_\tau q_k (\tau, \xi)
        \iff
       0 & = b_k + \xi d_k - \tau(a_k + \xi c_k).
       \label{eq:opt_tau}
    \end{align}
    Similarly it holds that
    \begin{align*}
        \partial_\xi q_k (\tau, \xi)
        & = \tfrac{(c_k + \tau d_k)(1 + \xi^2) - \xi(a_k + \tau b_k + \xi c_k + \tau\xi d_k)}{\sqrt{1 + \tau^2}\sqrt{1 + \xi^2}^3}
        = \tfrac{c_k + \tau d_k - \xi(a_k + \tau b_k)}{\sqrt{1 + \tau^2}\sqrt{1 + \xi^2}^3}.
    \end{align*}
    and thus,
    \begin{align}
    	0 = \partial_\xi q_k (\tau, \xi)
    	\iff 
    	0 & = c_k + \tau d_k - \xi(a_k + \tau b_k).
            \label{eq:opt_xi}
    	% \iff
    	% \xi & = \frac{c_k + \tau d_k}{a_k + \tau b_k}
            % \notag
    \end{align}
    Hence,
    if $a_k \neq - \xi c_k$ and $a_k \neq - \tau b_k$,
    which we discuss later,
    we have from \eqref{eq:opt_tau} and \eqref{eq:opt_xi} that
    \begin{align}
        \label{eqn:opt_crit_two_stepsizes_solved}
        \tau = \frac{b_k + \xi d_k}{a_k + \xi c_k} 
        \quad\text{and}\quad
        \xi = \frac{c_k + \tau d_k}{a_k + \tau b_k}.
    \end{align}
    Plugging the optimal form of $\tau$ into the optimal form of $\xi$ yields 
    \begin{align*}
        \xi
        & = 
        \frac{c_k + \frac{b_k + \xi d_k}{a_k + \xi c_k} d_k}{a_k + \frac{b_k + \xi d_k}{a_k + \xi c_k} b_k}
        \quad \iff \quad 
        0 = 
        \xi^2 + \xi \frac{a_k^2 + b_k^2 - c_k^2 - d_k^2}{a_k c_k +  d_k b_k} - 1.
    \end{align*}
    We obtain the two optimal solutions
    \begin{align*}
        \xi_{\pm}
        \coloneqq
        -\frac{a_k^2 + b_k^2 - c_k^2 - d_k^2}{2(a_k c_k + d_k b_k)}
        \pm 
        \sqrt{\frac{(a_k^2 + b_k^2 - c_k^2 - d_k^2)^2}{4(a_k c_k + d_k b_k)^2} + 1}.
    \end{align*}
    Similarly,
    by plugging the optimal form of $\xi$ into the optimal form of $\tau$ yields 
    \begin{align*}
        \tau 
        & = 
        \frac{b_k + \frac{c_k + \tau d_k}{a_k + \tau b_k} d_k}{a_k + \frac{c_k + \tau d_k}{a_k + \tau b_k} c_k} 
        \quad \iff \quad
        0 = 
        \tau^2 
        + \tau \frac{a_k^2 - b_k^2 + c_k^2 - d_k^2}{a_k b_k + d_k c_k}
        - 1
    \end{align*}
    and the assertion follows.
    Now,
    we discuss the case,
    where $a_k + \tau b_k = 0$.
    There are two cases:
    1. $\tau = 0$, which results into $a_k = 0$, 
    contradicting the assumptions.
    2. $b_k = 0$, which also implies $a_k = 0$, 
    contradicting the assumptions.
    From the latter, 
    we have $\tau = - \tfrac{a_k}{b_k} \neq 0$,
    otherwise we have again the first case.
    Since $a_k + \tau b_k = 0$,
    we have from the optimality criteria \eqref{eq:opt_xi} that $c_k + \tau d_k = 0$ 
    and plugging in the expression for $\tau = - \tfrac{a_k}{b_k}$ yields 
    \begin{align*}
        0 = c_k - \tfrac{a_k}{b_k} d_k 
        \quad\Rightarrow\quad
        0 = b_k c_k - a_k d_k,
    \end{align*}
    contradicting the assumptions again.
    The same discussion can be done for $a_k + \xi c_k = 0$
    resulting into either $a_k = 0$ or $b_k c_k - a_k d_k = 0$, again.
    In summary,
    we have finite (potential) step sizes $(\tau_k^*, \xi_k^*)$.
\end{proof}

It remains to decide which pair of critical points corresponds to a local maximum of $q_{k}$ \adj{from \eqref{eq:q_function}}.
The previous proposition shows how the optimal $\tau_{k}^{*}$ depends on the respective optimal $\xi_{k}^{*}$.
Hence, 
we can use the optimality criteria for the critical step sizes $(\tau_k^*, \xi_k^*)$ 
to rewrite $q_k(\tau,\xi)$
as a one-dimensional function
in $\tau$ \adj{on the curve $(\tau, \xi_k^*(\tau))$ defined in \eqref{eqn:opt_crit_two_stepsizes_solved}},
which we again denote by 
\begin{equation}
    \label{eq:q_function_red}
    q_k(\tau) := q_{k}(\tau,\xi_{k}^{*}(\tau))
\end{equation} 
(this should not lead to confusion in the following).

\begin{lemma}
    \label{lem:two_step_sizes}
    Under the assumption from Proposition~\ref{prop:first_order_optimality}, 
    $q_k$ \adj{from \eqref{eq:q_function}}can be explicitly parametrized as one-dimensional function in $\tau$
    on the curve $\xi = \xi_k^*(\tau)$
    \begin{align*}
        q_{k}(\tau) 
        % \coloneqq
        % &\;
        % q_{k}(\tau,\xi_{k}^{*}(\tau)) \\
        = &\;
        \textup{sign}(a_k + \tau b_k)
        \frac{\sqrt{a_k^2 + c_k^2 + 2\tau (a_k b_k + c_kd_k) + \tau^2(b_k^2 + d_k^2)}}{\sqrt{1 + \tau^2}}.
    \end{align*}
\end{lemma}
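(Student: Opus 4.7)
The plan is a direct substitution and simplification. Starting from the critical-point relation supplied by Proposition~\ref{prop:first_order_optimality}, namely
\[
\xi_k^*(\tau) \;=\; \frac{c_k + \tau d_k}{a_k + \tau b_k},
\]
I would plug this into the definition \eqref{eq:q_function} of $q_k(\tau,\xi)$ and simplify the numerator and denominator separately before recombining.

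For the numerator $a_k + b_k\tau + (c_k + d_k\tau)\xi$, substituting $\xi_k^*(\tau)$ and bringing everything to the common denominator $a_k + \tau b_k$ yields
\[
a_k + b_k\tau + (c_k + d_k\tau)\xi_k^*(\tau) \;=\; \frac{(a_k + \tau b_k)^2 + (c_k + \tau d_k)^2}{a_k + \tau b_k}.
\]
For the denominator factor $\sqrt{1+\xi^2}$, the same substitution gives
\[
1 + \xi_k^*(\tau)^2 \;=\; \frac{(a_k + \tau b_k)^2 + (c_k + \tau d_k)^2}{(a_k + \tau b_k)^2},
\]
so that
\[
\sqrt{1 + \xi_k^*(\tau)^2} \;=\; \frac{\sqrt{(a_k + \tau b_k)^2 + (c_k + \tau d_k)^2}}{\abs{a_k + \tau b_k}}.
\]

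Combining the two expressions, the common factor $\sqrt{(a_k + \tau b_k)^2 + (c_k + \tau d_k)^2}$ cancels once, leaving
\[
q_k(\tau,\xi_k^*(\tau)) \;=\; \frac{a_k + \tau b_k}{\abs{a_k + \tau b_k}} \cdot \frac{\sqrt{(a_k + \tau b_k)^2 + (c_k + \tau d_k)^2}}{\sqrt{1 + \tau^2}},
\]
which produces the stated $\textup{sign}(a_k + \tau b_k)$ prefactor. Expanding the square
\[
(a_k + \tau b_k)^2 + (c_k + \tau d_k)^2 \;=\; a_k^2 + c_k^2 + 2\tau(a_k b_k + c_k d_k) + \tau^2 (b_k^2 + d_k^2)
\]
gives exactly the formula in the lemma.

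The only subtle point is why the manipulation is legitimate: one needs $a_k + \tau b_k \neq 0$ in order to divide by it, and the assumptions of Proposition~\ref{prop:first_order_optimality} were already used there to rule out exactly this degeneracy (together with $a_k + \xi c_k \neq 0$). So under those same hypotheses the critical-point substitution is valid along the entire curve in a neighborhood of interest, and the rewriting is well-defined. There is no real obstacle beyond the bookkeeping, in particular remembering that $\sqrt{(a_k + \tau b_k)^2} = \abs{a_k + \tau b_k}$ is what creates the sign factor that one might otherwise overlook.
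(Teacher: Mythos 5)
Your proof is correct and follows essentially the same route as the paper: substitute the critical-point relation $\xi_k^*(\tau)=(c_k+\tau d_k)/(a_k+\tau b_k)$ into $q_k$, compute $1+\xi^2=\bigl((a_k+\tau b_k)^2+(c_k+\tau d_k)^2\bigr)/(a_k+\tau b_k)^2$, and simplify, with the sign factor arising from $\sqrt{(a_k+\tau b_k)^2}=\abs{a_k+\tau b_k}$. The only cosmetic difference is that the paper first rewrites the numerator as $(1+\xi^2)(a_k+\tau b_k)$ before inserting the expression for $\sqrt{1+\xi^2}$, whereas you carry the explicit quadratic throughout; both are the same computation.
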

\begin{proof}
    From the optimality criteria \eqref{eq:opt_xi} for $\xi$,
    c.f.~Proposition~\ref{prop:first_order_optimality}, 
    we observe that
    \begin{align*}
        1 + \xi^2 
        = \frac{a_k^2 + c_k^2 + 2\tau(a_kb_k + c_kd_k) + \tau^2(b_k^2 + d_k^2)}{(a_k + \tau b_k)^2}.
    \end{align*}
    Hence,
    we have 
    \begin{align}
        \label{eq:one_plus_xi^2}
        \sqrt{1 + \xi^2}
        = 
        \frac{1}{|a_k + \tau b_k|}
        \sqrt{a_k^2 + c_k^2 + 2\tau(a_kb_k + c_kd_k) + \tau^2(b_k^2 + d_k^2)}.
    \end{align}
    Using \eqref{eqn:opt_crit_two_stepsizes_solved} \adj{in \eqref{eq:q_function}}, 
    we obtain
    \begin{align}
        q_k(\tau, \xi)
        % = \frac{a_k + \tau b_k + \xi c_k + \tau\xi d_k}{\sqrt{1 + \tau^2}\sqrt{1 + \xi^2}} 
        = \tfrac{a_k + \tau b_k + \xi(c_k + \tau d_k)}{\sqrt{1 + \tau^2}\sqrt{1 + \xi^2}}
        = \tfrac{(1+\xi^2)(a_k + \tau b_k)}{\sqrt{1 + \tau^2}\sqrt{1 + \xi^2}}
        = \tfrac{\sqrt{1 + \xi^2}}{\sqrt{1 + \tau^2}}(a_k + \tau b_k).\label{eq:q_function_tau}
    \end{align}
    Inserting the expression for $\sqrt{1 + \xi^2}$ from \eqref{eq:one_plus_xi^2} yields the assertion.
\end{proof}

\begin{remark}[Non-differentiability]
    \label{rem:diff_q_2}
    Notice that $q_k(\tau)$ in the form of the previous lemma
    is not differentiable due to the sign.
    However,
    \begin{align}
        \label{eq:q_2_function_tau}
        q_k^2(\tau)
        =
        \frac{a_k^2 + c_k^2 + 2\tau (a_kb_k + c_kd_k) + \tau^2(b_k^2 + d_k^2)}{1 + \tau^2}
    \end{align}
    is differentiable again. 
    Moreover,
    $q_k^2(\tau)$ equals $q_k^2(\tau, \xi)$
    on the curve given by \eqref{eq:opt_xi}%
    ---the optimality condition of $\xi$ w.r.t. $\tau$ for \eqref{eq:q_function}
    and therefore \eqref{eq:argmax_two}.
    Hence we will maximize $q_k^2(\tau)$ in the following.
\end{remark}

\begin{proposition}
    \label{prop:one_reduced_step_size}
    \adj{For $q_k$ defined in \eqref{eq:q_function_red}
    and $a_k, b_k, c_k$ and $d_k$ defined as in \eqref{eq:q_k_abcd_k} it holds 
    that:}
    \begin{enumerate}[i)] 
        \item If $a_k b_k + c_k d_k \neq 0$, 
        then the optimal pair $(\tau_{k},\xi_{k})$ solving \eqref{eq:argmax_two} is given by
        \begin{align*}
            \tau_k
            & = 
            \textup{sign}(a_k b_k + c_k d_k)\Bigl(-\tfrac{a_k^2 - b_k^2 + c_k^2 - d_k^2}{2\abs{a_k b_k + c_k d_k}} + \sqrt{\tfrac{(a_k^2 - b_k^2 + c_k^2 - d_k^2)^2}{4(a_k b_k + c_k d_k)^2} + 1}\Bigr)\\
        \xi_{k} & = \tfrac{c_k + \tau_kd_k}{a_k+\tau_kb_{k}}.
        \end{align*}
        \item If $a_k b_k + c_k d_k = 0$ and
        \begin{enumerate}[a)]
            \item $a_k^2 + c_k^2 > b_k^2 + c_k^2$,
            then $\tau = 0$ maximizes $q_k^2(\tau)$,
            \item $a_k^2 + c_k^2 < b_k^2 + c_k^2$,
            then $q_k^2(\tau)$ does not attain its supremum,
            \item $a_k^2 + c_k^2 = b_k^2 + c_k^2$,
            then $q_k^2$ is a constant.
        \end{enumerate}
    \end{enumerate}
\end{proposition}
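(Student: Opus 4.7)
The plan is to work directly with the smooth one-dimensional objective $q_k^2(\tau)$ from \eqref{eq:q_2_function_tau} (justified by Remark~\ref{rem:diff_q_2}), which represents the restriction of $q_k^2(\tau,\xi)$ to the curve where the $\xi$-optimality condition \eqref{eq:opt_xi} holds. For brevity I would abbreviate $P \coloneqq a_k^2+c_k^2$, $Q \coloneqq a_kb_k+c_kd_k$, and $R \coloneqq b_k^2+d_k^2$, so that $q_k^2(\tau) = (P + 2Q\tau + R\tau^2)/(1+\tau^2)$.

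Differentiating shows that the numerator of $(q_k^2)'(\tau)$ is proportional to the quadratic $-Q\tau^2 + (R-P)\tau + Q$. In the generic case $Q\neq 0$, the critical points are the roots of $Q\tau^2 + (P-R)\tau - Q = 0$, which are real and, by Vieta's formulas, satisfy $\tau_+\tau_- = -1$; in particular, one is positive and one is negative. To pick out the global maximizer I would use two elementary observations: $(q_k^2)'(0) = 2Q$, so $q_k^2$ is increasing at $\tau=0$ exactly when $Q>0$, and $q_k^2(\tau) \to R$ as $\tau\to\pm\infty$. Since only two critical points exist, these facts force the global maximum to occur at the critical root whose sign matches $\sign(Q)$, while the other root is a local minimum.

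Solving the quadratic for that distinguished root, and rewriting $\sqrt{(R-P)^2 + 4Q^2}/(2Q)$ as $\sign(Q)\sqrt{(P-R)^2/(4Q^2) + 1}$, produces the closed-form expression for $\tau_k$ stated in the proposition. The companion step size $\xi_k$ is then read off from the $\xi$-optimality condition \eqref{eq:opt_xi} as $\xi_k = (c_k + \tau_k d_k)/(a_k + \tau_k b_k)$, with the denominator nonzero under the standing assumptions of Proposition~\ref{prop:first_order_optimality}.

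For the degenerate case $Q=0$ the derivative numerator collapses to $2(R-P)\tau$, and the three stated sub-cases follow at once: if $P>R$ the function increases on $(-\infty,0]$ and decreases on $[0,\infty)$, so the maximum is attained at $\tau=0$; if $P<R$ then $\tau=0$ is a minimum and $q_k^2(\tau)\to R$ without being attained; if $P=R$ the function is identically $P$. The main obstacle throughout is purely bookkeeping: selecting the correct critical root among $\tau_\pm$ and massaging it into the compact $\sign(Q)$-form prescribed by the statement. Once the key observations $(q_k^2)'(0) = 2Q$ and $\tau_+\tau_-=-1$ are in hand, the remaining steps are routine algebra.
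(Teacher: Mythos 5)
Your proposal is correct and follows essentially the same route as the paper: reduce to the one-dimensional smooth objective $q_k^2(\tau)$, find the two critical points of the quadratic numerator of $(q_k^2)'$, select the root whose sign matches $\sign(a_kb_k+c_kd_k)$ via $(q_k^2)'(0)=2(a_kb_k+c_kd_k)$, recover $\xi_k$ from \eqref{eq:opt_xi}, and treat the degenerate case by inspection of the derivative. Your additional observations (Vieta's formula $\tau_+\tau_-=-1$ and the limit $q_k^2(\tau)\to b_k^2+d_k^2$ at infinity) only make the global-maximality claim more explicit than the paper's terser justification.
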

\begin{proof}
    We have from Remark~\ref{rem:diff_q_2}
    \begin{align}
        \label{eq:root_tau}
        \big(q_k^2\big)'(\tau) 
        & = 
        \tfrac{[2(a_kb_k + c_kd_k) + 2\tau(b_k^2 + d_k^2)](1 + \tau^2)
        - 2\tau [a_k^2 + c_k^2 + 2 \tau(a_k b_k + c_k d_k) + \tau^2(b_k^2 + d_k^2)]}{(1 + \tau^2)^2} \notag \\
        & = 
        2\tfrac{a_kb_k + c_kd_k 
        - \tau (a_k^2 - b_k^2 + c_k^2 - d_k^2)
        - \tau^2(a_kb_k + c_kd_k)}{(1 + \tau^2)^2}.
    \end{align}
    As long as $a_kb_k + c_k d_k \neq 0$
    we have two solutions of $\big(q_k^2\big)'(\tau) =0$
    given by 
    \begin{align*}
        \tau_{\pm}
        \coloneqq
        -\tfrac{a_k^2 - b_k^2 + c_k^2 - d_k^2}{2(a_kb_k + c_kd_k)}
        \pm \sqrt{\tfrac{(a_k^2 - b_k^2 + c_k^2 - d_k^2)^2}{4(a_kb_k + c_kd_k)^2} + 1},
    \end{align*}
    which we already know from Proposition~\ref{prop:first_order_optimality}.
    However,
    we have to decide which one is optimal,
    i.e. solves \eqref{eq:argmax_two}.
    To that end we calculate
    \begin{align*}
        \big(q_k^2\big)'(0) = 2(a_k b_k + c_k d_k).
    \end{align*}
    Hence, we choose the positive root $\tau_+$ if $a_k b_k + c_k d_k > 0$, 
    and the negative root if $a_k b_k + c_k d_k < 0$.
    This shows the claimed formula for $\tau_{k}$ and the formula for the optimal $\xi_{k}$ follows from~\eqref{eq:opt_xi}.
    
    If $a_k b_k + c_k d_k = 0$,
    then $\tau = 0$ is optimal 
    if any only if $a_k^2 + c_k^2 > b_k^2 + d_k^2$.
    If $a_k b_k + c_k d_k = 0$ and $a_k^2 + c_k^2 = b_k^2 + d_k^2=0$,
    then any $\tau$ is optimal as can be seen from \eqref{eq:root_tau}. Finally, if $a_k^2 + c_k^2 < b_k^2 + d_k^2$, then from \eqref{eq:root_tau} we get $\big(q_k^2\big)'(\tau)>0$ for $\tau>0$ and $\big(q_k^2\big)'(\tau)<0$ for $\tau<0$, which means that the maximum is not attained.
\end{proof}

Figure~\ref{fig:shape-q} illustrates the function $q_{k}^{2}(\tau)$ in the different cases (and the situation is similar to the one in \cite[Figure~1]{BLSW24}).

% settings for the plots of q 
\pgfplotsset{width=5cm,height=4cm,
  xlabel style={xlabel=$\tau$},
  samples = 50,
  every axis plot post/.append style={purple,mark=none},
  every axis post/.append style={xmin=-5, xmax=5, ymin=0, ymax = 1.9, axis lines=middle, xtick=\empty},
  every tick label/.append style={font=\tiny}
}

\begin{figure}[htb]
  \resizebox{\linewidth}{!}{
  \begin{tabular}{cccc}
    & 
    $a_{k}b_k + c_kd_k>0$ &
    $a_{k}b_k + c_kd_k<0$ &
    $a_{k}b_k + c_kd_k=0$ \\\bigskip
    \rotatebox{90}{\scalebox{0.9}{\quad $a_k^2 + c_k^2 < b_k^2 + d_k^2$}} & 
    \begin{tikzpicture}[scale=1.3]
      \begin{axis}[ytick={1,1.5},yticklabels={$a_k^2 + c_k^2$,$b_k^2 + d_k^2$}]
        \addplot{(1+x*1+1.5*x*x)/(1+x*x)};
        \addplot[thin,dotted] {1.5};
      \end{axis}
    \end{tikzpicture}&
    \begin{tikzpicture}[scale=1.3]
      \begin{axis}[ytick={1,1.5},yticklabels={$a_k^2 + c_k^2$,$b_k^2 + d_k^2$}]
        \addplot{(1-x*1+1.5*x*x)/(1+x*x)};
        \addplot[thin,dotted] {1.5};
      \end{axis}
    \end{tikzpicture}&
    \begin{tikzpicture}[scale=1.3]
      \begin{axis}[ytick={1,1.5},yticklabels={$a_k^2 + c_k^2$,$b_k^2 + d_k^2$}]
        \addplot{(1+1.5*x*x)/(1+x*x)};
        \addplot[thin,dotted] {1.5};
      \end{axis}
    \end{tikzpicture}\\\bigskip
    \rotatebox{90}{\scalebox{0.9}{\quad $a_k^2 + c_k^2 > b_k^2 + d_k^2$}} & 
    \begin{tikzpicture}[scale=1.3]
      \begin{axis}[ytick={1,1.5},yticklabels={$b_k^2 + d_k^2$,$a_k^2 + c_k^2$}]
        \addplot{(1.5+x*1+1*x*x)/(1+x*x)};
        \addplot[thin,dotted] {1};
      \end{axis}
    \end{tikzpicture}&
    \begin{tikzpicture}[scale=1.3]
      \begin{axis}[ytick={1,1.5},yticklabels={$b_k^2 + d_k^2$,$a_k^2 + c_k^2$}]
        \addplot{(1.5-x*1+1*x*x)/(1+x*x)};
        \addplot[thin,dotted] {1};
      \end{axis}
    \end{tikzpicture}&
    \begin{tikzpicture}[scale=1.3]
      \begin{axis}[ytick={1,1.5},yticklabels={$b_k^2 + d_k^2$,$a_k^2 + c_k^2$}]
        \addplot{(1.5+1*x*x)/(1+x*x)};
        \addplot[thin,dotted] {1};
      \end{axis}
    \end{tikzpicture}\\
  \end{tabular}}
  \caption{Shape of $q_k^2(\tau)$ from~\eqref{eq:q_2_function_tau} 
  for different cases of the signs of $a_kb_k + c_kd_k$ 
  and $a_k^2 - b_k^2 + c_k^2 - d_k^2$.}
  \label{fig:shape-q}
\end{figure}

\begin{remark}[Rewritten squared objective]
    \label{rem:rewritten_squared_objective}
    Similar to the proof of Lemma \ref{lem:two_step_sizes},
    we could also write 
    for the optimal pair $(\tau_k, \xi_k)$
    \begin{align}
        \label{eq:q_function_xi}
        q_k(\tau_k, \xi_k)
        = 
        \tfrac{\sqrt{1 + \tau_k^2}}{\sqrt{1 + \xi_k^2}} (a_k + \xi_k c_k)
    \end{align}
    by replacing the optimality criteria for $\xi$.
    In summary, 
    combining \eqref{eq:q_function_tau} 
    and \eqref{eq:q_function_xi} we have 
    \begin{align*}
        q_k^2(\tau_k, \xi_k)
        = 
        (a_k + \tau_k b_k)(a_k + \xi_k c_k).
        % \label{eq:qk2_expressed_as_product}
    \end{align*}
    along the two curves given by the optimal criteria 
    \eqref{eqn:opt_crit_two_stepsizes_solved}.
\end{remark}

\adj{\begin{remark}[Markov chain provided by $(u^k, v^k)$]
    \label{rem:distribution-x-w-const}
    The pair $(u^k, v^k)$ 
    generated by Algorithm~\ref{alg:OpNorm3}
    can be understood as sequence of random variables,
    i.e. measurable functions 
    $(u,v)^k : \Omega \to \sphere^{m-1} \times \sphere^{d-1}$,
    realized by $(u^k(w), v^k(w)) \coloneqq (u,v)^k(w)$ for some $w \in \Omega$,
    where $\Omega$ is a sampling space
    equipped with a $\sigma$-algebra $\cF$ and probabilty meausre $\PP$
    forming a probability space $(\Omega, \cF, \PP)$.
    Similar to \cite[Rem.~2.7]{BLSW24},
    the sampling space is $\Omega = (\sphere^{m-1}\times\sphere^{d-1})^{\NN}$,
    and $\cF = \cS^{\otimes \NN}$ with $\cS = \cB(\sphere^{m-1}) \otimes \cB(\sphere^{d-1})$
    being the product of the Borel $\sigma$-algebras.
    The definition of the probability measure $\PP$
    follows the Markov chain rule 
    using the Markov kernel describing the conditional distribution 
    of $(w,x)$ given $(u,v)$
    \begin{equation}
        \label{eq:markov_kernel}
        K_{(w,x)\mid(u,v)}((u,v),\cdot) = (T_{uv})_{\#}(\sigma_{\sphere^{m-1}}\otimes\sigma_{\sphere^{d-1}}),
    \end{equation}
    where $T_{uv} : \sphere^{m-1}\times\sphere^{d-1} \to (\sphere^{m-1} \cap \{u\}^\perp)\times(\sphere^{d-1}\cap\{v\}^\perp)$
    via
    \begin{align*}
        (z,y) \mapsto \bigl(\tfrac{P_{\{u\}^\perp}z}{\norm{P_{\{u\}^\perp}z}}, \tfrac{P_{\{v\}^\perp}y}{\norm{P_{\{v\}^\perp}y}}\bigr)
        \coloneqq \bigl(\tfrac{(I_m - uu^*)z}{\norm{(I_m - uu^*)z}}, \tfrac{(I_d - vv^*)y}{\norm{(I_d - vv^*)y}}\bigr).
    \end{align*}
\end{remark}}

% \adj{
% \begin{remark}[Distribution of $(w^k, x^k)$]    \label{rem:distribution-x-w-const}
%     We use the pair $(u^k, v^k)$ as random variables 
%     where $(u,v)^k : \Omega \to \sphere^{m-1} \times \sphere^{d-1}$
%     is a sequence of measurable functions
%     defining our sequence like $(u^k(w), v^k(w)) \coloneqq (u,v)^k(w)$
%     equipped with a probability spaces $(\Omega, \cF, \PP)$.
%     Similar to \cite[Rem.~2.7]{BLSW24},
%     the sampling space is $\Omega = (\sphere^{m-1}\times\sphere^{d-1})^{\NN}$,
%     and $\cF = \cS^{\otimes \NN}$ with $\cS = \cB(\sphere^{m-1}) \otimes \cB(\sphere^{d-1})$.
%     The definition of the probability measure $\PP$
%     follows the Markov chain rule 
%     using the Markov kernel describing the conditional distribution 
%     of $(w,x)$ given $(u,v)$
%     \begin{equation*}
%         K_{(w,x)\mid(u,v)}((u,v),\cdot) = (T_{uv})_{\#}(\sigma_{\sphere^{m-1}}\otimes\sigma_{\sphere^{d-1}}),
%     \end{equation*}
%     where $T_{uv} : \sphere^{m-1}\times\sphere^{d-1} \to (\sphere^{m-1} \cap \{u\}^\perp)\times(\sphere^{d-1}\cap\{v\}^\perp)$
%     via
%     \begin{align*}
%         (z,y) \mapsto \bigl(\tfrac{P_{\{u\}^\perp}z}{\norm{P_{\{u\}^\perp}z}}, \tfrac{P_{\{v\}^\perp}y}{\norm{P_{\{v\}^\perp}y}}\bigr)
%         \coloneqq \bigl(\tfrac{(I_m - uu^*)z}{\norm{(I_m - uu^*)z}}, \tfrac{(I_d - vv^*)y}{\norm{(I_d - vv^*)y}}\bigr).
%     \end{align*}
% \end{remark}}

\adj{\subsection{On the assumptions for the optimal step size}
\label{sec:as_negative_cases}

Now, we give some statements characterizing the assumptions 
in Proposition~\ref{prop:first_order_optimality}
and hence Proposition~\ref{prop:one_reduced_step_size}.}

\begin{remark}[Almost sure non-zero and monotone sequence of objective values]
    \label{rem:almost_sure_non_zero_objective}
    Under the assumption that $A\neq V$
    we have by construction 
    that the initial objective fulfills $a_0 = \scp{u^0}{(A - V) v^0} > 0$ almost surely.
    Therewith,
    under the assumptions from Proposition~\ref{prop:first_order_optimality},
    for every iterate it holds a.s. that
    \begin{align*}
        a_{k+1}^2 = q_k^2(\tau_k) > a_k^2 > a_0^2 > 0,\quad\text{and}\quad a_{k+1} = q_{k}(\tau_{k}) > a_{k}> a_{0}>0.
    \end{align*}
    Especially we have throughout the iterations 
    that $v^k \not \in \ker(A - V)$
    and $u^k \not \in \ker((A - V)^*)$.
\end{remark}

In the following we will use the abbreviation
\begin{align}
\label{eq:AV}
\AV\coloneqq \begin{pmatrix}0 & (A-V)^* \\ (A-V) & 0 \end{pmatrix}.
\end{align}

The next result is similar to \cite[Lem 2.8]{BLSW24} and \adj{guarantees that 
$b_k + c_k \neq 0$} in Algorithm~\ref{alg:OpNorm3} almost surely if $(u^{k},v^{k}) \in \sphere^{m-1}\times\sphere^{d-1}$ 
is not a pair of left  and right singular vectors.

\begin{lemma}
    \label{lem:update_step_as}
    Assume that $(u^k,v^k) \in \sphere^{d-1}\times\sphere^{m-1}$ 
    is not a pair of left  and right singular vectors \adj{of $A-V$} corresponding 
    to a common singular value, 
    and that $x^k$ and $w^k$ are \adj{sampled} according to steps 3 and 4 
    of Algorithm~\ref{alg:OpNorm3}. 
    Then it holds that $b_k + c_k \neq 0$ almost surely.
\end{lemma}
\begin{proof}
Let 
\begin{align*}
    0 
    = b_k + c_k 
    = \scp{\AV\begin{pmatrix} v^k \\ u^k \end{pmatrix}}{\begin{pmatrix} x^k \\ w^k \end{pmatrix}}.
\end{align*}
Accordingly, 
\adj{we have $x^k \in \{v^k\}^\perp$ and $w^k \in \{u^k\}^\perp$,
or equivalently in $\RR^{d+m}$}
\begin{align*}
    \adj{\left[\begin{pmatrix} v^k \\ 0 \end{pmatrix} \perp \begin{pmatrix} x^k \\ 0 \end{pmatrix}  
    \land 
    \begin{pmatrix} 0 \\ u^k \end{pmatrix} \perp \begin{pmatrix} 0 \\ w^k \end{pmatrix} \right]}
    \quad\text{and}\quad 
    \left[\begin{pmatrix} x^k \\ w^k \end{pmatrix} \perp \begin{pmatrix} (A-V)^* u^k \\ (A-V) v^k \end{pmatrix}\right].
\end{align*}
\adj{Those conditions do not restrict the dimensions 
of the sampling space, if it was true that
\begin{equation}
    \label{eq:1}
    \begin{pmatrix} (A-V)^* u^k \\ (A-V) v^k \end{pmatrix} 
    = \lambda_1 \begin{pmatrix} v^k \\ 0 \end{pmatrix}
    + \lambda_2 \begin{pmatrix} 0 \\ u^k \end{pmatrix}
\end{equation}
for some $\lambda_1, \lambda_2 \neq 0$.
From \eqref{eq:1} we conclude that 
\begin{equation*}
    \lambda_1 = \lambda_1 \scp{v^k}{v^k}
    = \scp{v^k}{(A - V)^* u^k}
    = \scp{(A - V)v^k}{ u^k}
    = \lambda_2\scp{ u^k}{u^k}
    = \lambda_2.
\end{equation*}}
If it was true that
\begin{align*}
    \begin{pmatrix} v^k \\ u^k \end{pmatrix} 
    = \lambda \begin{pmatrix} (A-V)^* u^k \\ (A-V) v^k \end{pmatrix}
\end{align*}
for some $\lambda \neq 0$, 
then we would have
\begin{align*}
    \left\{ \begin{array}{l} 
    v^{k} = \lambda^2 (A-V)^* (A-V) v^k, \\ 
    u^{k} = \lambda^2 (A-V)(A-V)^* u^k.
    \quad \Leftrightarrow \quad 
    \AV \begin{bmatrix}
        u^k \\ v^k
    \end{bmatrix} 
    = \sigma \begin{bmatrix}
        u^k \\ v^k
    \end{bmatrix} 
\end{array}\right.
\end{align*}
Thus, $(u^k,v^k)$ would be a pair of left  and right singular vectors corresponding to a common singular value, contradicting the initial assumption. Therefore, the random vector $(x^k,w^k)^*$ must lie in a subspace of dimension at most $(d-1)+(m-1)-1$. 
Since both $x^k \in \sphere^{d-1}$ and $w^k \in \sphere^{m-1}$ 
are independently and uniformly drawn, 
this condition restricts the random vector $(x^k,w^k)^*$ to a set of zero measure, 
\adj{see Remark~\ref{rem:distribution-x-w} and~\ref{rem:distribution-x-w-const}}.
Consequently, this situation almost surely never occurs.
\end{proof}

The next result is similar to \cite[Lem 2.8]{BLSW24}
and Lemma~\ref{lem:update_step_as} but more technical
\adj{and ensures that the iterations in Algorithm~\ref{alg:OpNorm3} are well-defined (a.s.) 
as long as $(u^k, v^k) \in \sphere^{m-1}\times\sphere^{d-1}$ is not a pair of singular vectors of $A - V$}.

\begin{proposition}
    \label{prop:factos_alomst_surely_not_zero}
    Assume that $(u^k, v^k) \in \sphere^{m-1}\times\sphere^{d-1}$ is not a pair of singular vectors
    of $A - V$,
    and that $A-V\neq 0$.
    Furthermore, let $a_k, b_k ,c_k$ and $d_k$ be generated as in Algorithm~\ref{alg:OpNorm3}.
    Then it almost surely holds that
    \begin{align*}
        a_k b_k + c_k d_k \neq 0, \quad 
        a_k c_k + b_k d_k \neq 0\quad 
        \text{and}\quad 
        b_k c_k - a_k d_k \neq 0.
    \end{align*}
\end{proposition}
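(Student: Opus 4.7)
The plan is to recognize each of the three quantities as a polynomial in the random pair $(w^k, x^k) \in ((u^k)^\perp \cap \sphere^{m-1}) \times ((v^k)^\perp \cap \sphere^{d-1})$ and invoke the standard fact that the zero set of a non-identically-vanishing polynomial is a proper algebraic subvariety, hence has measure zero with respect to the Haar measure on the product of spheres. The three claims then reduce to showing non-identical vanishing. Writing $M := A-V$, the three expressions rewrite as
\begin{align*}
  a_kb_k + c_kd_k &= \scp{w^k}{a_k M v^k + c_k M x^k}, \\
  a_kc_k + b_kd_k &= \scp{x^k}{a_k M^* u^k + b_k M^* w^k}, \\
  b_kc_k - a_kd_k &= \scp{w^k}{c_k Mv^k - a_k Mx^k},
\end{align*}
each of which is linear in $w^k$ (or in $x^k$) with coefficients that depend polynomially on the remaining variable.

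The overall strategy is a Fubini-style two-stage argument. Conditioning on $x^k$ (respectively $w^k$), the expression is a linear form in $w^k$ (resp. $x^k$); since $w^k$ is uniform on the unit sphere in $(u^k)^\perp$, this inner probability is zero unless the coefficient vector lies in $\mathrm{span}(u^k)$ (resp.\ $\mathrm{span}(v^k)$). I would then analyze when the coefficient vector, as a polynomial function of the other variable, can land in this one-dimensional span on a set of positive measure. For example, for the first condition, projecting $a_k M v^k + c_k M x^k$ onto $(u^k)^\perp$ gives $a_k(Mv^k - a_k u^k) + c_k(Mx^k - c_k u^k)$; asking this to vanish for $x^k$ of positive measure in $(v^k)^\perp \cap \sphere^{d-1}$ (together with picking $x^k$ for which $c_k = \langle M^*u^k, x^k\rangle = 0$, and then varying) forces the identities $Mv^k = a_k u^k$ and $Mx = \langle u^k, Mx\rangle u^k$ for all $x \in (v^k)^\perp$. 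Combined via linearity, this forces $M$ to be the rank-one map $M = u^k (M^*u^k)^T$.

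At this point one uses that $a_k > 0$ by Remark~\ref{rem:almost_sure_non_zero_objective}, so $M^*u^k \neq 0$, and then shows that $(u^k, v^k)$ must be a pair of left and right singular vectors of $M$ to its unique nonzero singular value, contradicting the hypothesis. The same structural analysis, applied to the second and third expressions (the second being symmetric to the first under the substitution $M \leftrightarrow M^*$, $(u,v) \leftrightarrow (v,u)$, $(w,x) \leftrightarrow (x,w)$, the third reducing to a vanishing determinant of a $2\times 2$ Gram-type matrix $P^\top M Q$ with $P=[u^k\,|\,w^k]$ and $Q=[v^k\,|\,x^k]$), again forces a rank-one structure on $M$ with either $u^k$ as its left principal or $v^k$ as its right principal direction.

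The main obstacle is isolating precisely which algebraic degeneracies actually survive the hypotheses. In particular, the third identity $b_kc_k - a_kd_k = -\det(P^\top M Q)$ vanishes identically in $(w^k, x^k)$ whenever $M$ is rank one with left principal singular vector $u^k$, so ruling out this situation needs careful use of Assumption~\ref{ass:assump_1} (which constrains the multiplicities of singular values and in particular excludes rank-one $M$ in the square case $d=m$) together with the fact that, under the hypothesis $(u^k, v^k) \notin$ any common singular-vector pair, any remaining rank-one degeneracy leads to a contradiction with either $(u^k,v^k)$ being a pair of singular vectors to the nonzero singular value or to a pair of singular vectors to $0$. Once this bookkeeping is done for each of the three conditions, the measure-zero conclusion follows uniformly by the argument above.
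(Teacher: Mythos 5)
Your overall strategy---viewing each of the three quantities as a linear form in $w^k$ (respectively $x^k$) whose coefficient vector depends polynomially on the other variable, conditioning, and noting that such a form vanishes with positive conditional probability only if the coefficient vector projects to zero on $(u^k)^\perp$ (resp.\ $(v^k)^\perp$)---is sound, and it is in fact sharper than the paper's own argument: the paper collapses the two proportionality constants for the two block components into a single $\eta$, whereas your conditioning correctly allows two independent scalars, which is exactly where the degenerate case you then uncover is hiding. Your analysis of $a_kb_k+c_kd_k$ up to the dichotomy ``either $(A-V)^*u^k\in\operatorname{span}(v^k)$, whence $(u^k,v^k)$ is a singular pair, or $\range(A-V)\subseteq\operatorname{span}(u^k)$'' is correct.

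The genuine gap is in how you dispose of the second alternative. If $M:=A-V=u^k s^{*}$ with $s=M^{*}u^k\neq 0$, then $(u^k,v^k)$ is a pair of singular vectors only when $v^k\parallel s$; choosing $v^k$ with $\scp{s}{v^k}=a_k\neq 0$ but $v^k\not\parallel s$ satisfies every hypothesis of the proposition, yet $b_k=\scp{w^k}{u^k}\scp{s}{v^k}=0$ and $d_k=\scp{w^k}{u^k}\scp{s}{x^k}=0$ surely, so $a_kb_k+c_kd_k$ and $b_kc_k-a_kd_k$ vanish identically and the conclusion fails. Hence your claim that the rank-one case forces $(u^k,v^k)$ to be a singular pair (to the nonzero singular value, or to $0$) is false, and the appeal to Assumption~\ref{ass:assump_1} does not close the hole: that assumption is not among the proposition's hypotheses, and with the paper's own definition of multiplicity (the dimension of the eigenspace of $(A-V)^*(A-V)$ in $\RR^d$) a rank-one map with $d<m$ has zero singular value of multiplicity $d-1<\max\{d,m\}-1$ and is not excluded. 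The mirror configuration $\ker(A-V)=(v^k)^{\perp}$ likewise kills $a_kc_k+b_kd_k$ and $b_kc_k-a_kd_k$. To complete the argument you must either add an explicit assumption $\operatorname{rank}(A-V)\geq 2$ (or an unambiguous multiplicity bound implying it) or show separately that the iterates almost surely never reach these rank-one configurations; the paper's proof silently suppresses them through the unjustified single-$\eta$ step, so what you have found is a real defect in the statement, not merely in your own write-up.
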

\begin{proof}
Under the assumption that $A - V \neq 0$,
we already have by Remark~\ref{rem:almost_sure_non_zero_objective},
that $a_k = \scp{u^k}{(A - V) v^k} \neq 0$ almost surely.
To show the assertion
we rewrite the first factor by definition from \eqref{eq:q_k_abcd_k}
as follows 
\begin{align}
    \label{eq:abcd_parameter}
    a_k b_k + c_k d_k
    = \scp{\begin{pmatrix}
        w^k \\ x^k 
    \end{pmatrix}}{\begin{pmatrix}
        (A - V) v^k \scp{(A - V)v^k}{u^k} \\
        (A - V)^* u^k \scp{(A - V)x^k}{w^k}
    \end{pmatrix}}.
\end{align}
If we assume $a_k b_k + c_k d_k = 0$
in combination with \adj{$v^{k}\perp x^{k}$ and $u^{k}\perp w^{k}$}
a similar discussion 
of proper subspaces as in Lemma~\ref{lem:update_step_as}
with some $\eta \in \RR$,
leads to 
\begin{align}
    \label{eq:aux-eq-parameter-abcd}
    \left\{ \begin{array}{l}
    \eta v^k = (A - V)^* u^k \scp{(A - V)x^k}{w^k} = (A - V)^* u^k d_k,  \\
    \eta u^k = (A - V) v^k \scp{(A - V)v^k}{u^k} = (A - V) v^k a_k.
    \end{array}\right.
\end{align}
Multiplying the last equation in~\eqref{eq:aux-eq-parameter-abcd} 
with $(u^k)^*$ from the left yields by definition from \eqref{eq:q_k_abcd_k} that
\begin{align}
    \label{eq:eta_a_k}
    \eta = \eta \scp{u^k}{u^k}
    = \scp{(A - V)v^k}{u^k}^2
    = a_k^2
\end{align}
and with $(w^k)^*$ yields 
\begin{align*}
    0 & = \scp{(A - V)v^k}{w^k}\scp{(A - V) v^k}{u^k} \\
    \quad\Rightarrow\quad
    0 & = \scp{(A - V)v^k}{w^k} \; \lor \; 0 = \scp{(A - V) v^k}{u^k}.
\end{align*}
Multiplying the first equation in~\eqref{eq:aux-eq-parameter-abcd} with $(x^k)^*$ from the left yields
\begin{align*}
    0 & = \scp{(A - V)x^k}{u^k}\scp{(A - V) x^k}{w^k} \\
    \quad\Rightarrow\quad
    0 & = \scp{(A - V)x^k}{u^k} \; \lor \; 0 = \scp{(A - V) x^k}{w^k}.
\end{align*}
From the latter,
we end up with four cases:
three of them result into
$\eta = a_k^2 = \scp{u^k}{(A - V) v^k}^2 = 0$,
and thus $a_k = 0$ from \eqref{eq:aux-eq-parameter-abcd}
and utilizing \eqref{eq:eta_a_k},
where the last case 
($0 = \scp{(A - V)v^k}{w^k} = b_k$ and $0 = \scp{(A - V)x^k}{u^k} = c_k$)
implies that $(u^k, v^k)$ is a pair of singular vectors%
---all contradicting the assumptions,
since $a_k \neq 0$ using Remark~\ref{rem:almost_sure_non_zero_objective}
and $(u^k, v^k)$ is not a pair of singular vectors,
utilizing again a similar argumentation as in Lemma~\ref{lem:update_step_as}.
% The latter can be seen as follows:
% since $b_k = 0 = c_k$ we obtain 
% \begin{align*}
%     0 = b_k + c_k = \scp{\AV\begin{pmatrix} v^k \\ u^k \end{pmatrix}}{\begin{pmatrix} x^k \\ w^k \end{pmatrix}}
% \end{align*}
% such that 
% \begin{align*}
%     \adj{\left[\begin{pmatrix} v^k \\ 0 \end{pmatrix} \perp \begin{pmatrix} x^k \\ 0 \end{pmatrix}  
%     \land 
%     \begin{pmatrix} 0 \\ u^k \end{pmatrix} \perp \begin{pmatrix} 0 \\ w^k \end{pmatrix} \right]}
%     \quad\land\quad \left[\begin{pmatrix} x^k \\ w^k \end{pmatrix} \perp \begin{pmatrix} (A-V)^* u^k \\ (A-V) v^k \end{pmatrix}\right].
% \end{align*}

A similar construction, argumentation and conclusion 
can be done for the second factor,
since 
\begin{align*}
    a_k c_k + b_k d_k
    = 
    \scp{\begin{pmatrix}
        w^k \\ x^k 
    \end{pmatrix}}{\begin{pmatrix}
        (A - V) v^k \scp{(A - V)x^k}{w^k} \\
        (A - V)^* u^k \scp{(A - V)v^k}{u^k}
    \end{pmatrix}}
\end{align*}

For the last one,
we obtain
\begin{align*}
    b_k c_k - a_k d_k
    = 
    \scp{\begin{pmatrix}
        w^k \\ x^k 
    \end{pmatrix}}{\begin{pmatrix}
        - (A - V) x^k \scp{(A - V)v^k}{u^k} \\
        (A - V)^* u^k \scp{(A - V)v^k}{w^k}
    \end{pmatrix}}.
\end{align*}
Hence, 
if we assume that $b_k c_k - a_k d_k = 0$,
we have as previously some $\eta \in \RR$,
since $w^k \in \sphere^{m-1} \cap \{u^k\}^\perp$
and $x^k \in \sphere^{d-1} \cap \{v^k\}^\perp$ are randomly \adj{chosen},
such that 
\begin{align}
    \label{eq:aux-eq-parameter-acbd}
    \left\{ \begin{array}{l}
    \eta u^k = - (A - V) x^k a_k = - (A - V) x^k \scp{(A - V)v^k}{u^k}, \\
    \eta v^k = (A - V)^* u^k b_k = (A - V)^* u^k \scp{(A - V)v^k}{w^k}.
    \end{array}\right.
\end{align}
Taking the inner product with $w^{k}$ in the first equation in \eqref{eq:aux-eq-parameter-acbd} 
and with $x^{k}$ in the second 
yields $0 = b_k c_k = a_k d_k$.
Similarly, the inner products with $u^{k}$ and $v^{k}$ in the first and second equation in \eqref{eq:aux-eq-parameter-acbd}, respectively,
yields $\eta = a_k b_k = a_k c_k$.
Hence, we obtain $d_k = 0$, since $a_k = \scp{u^k}{(A - V) v^k} \neq 0$
and we conclude $c_k = 0$ or $b_k = 0$
and consequently $\eta = 0$.
Now, taking the inner product with $u^{k}$ in the first and with $v^{k}$ in the second equation of \eqref{eq:aux-eq-parameter-acbd}, respectively, yields
\begin{align*}
    0 & = \eta \norm{u^k}^2 = -\scp{u^k}{(A - V) x^k}\scp{u^k}{(A - V) v^k} = - c_k a_k, \\
    0 & = \eta \norm{v^k}^2 = \scp{u^k}{(A - V) v^k}\scp{w^k}{(A - V) v^k} = a_k b_k,
\end{align*}
since $\eta = 0$.
From the first equation we obtain that $a_k = 0$ or $c_k = 0$,
where $a_k = 0$ contradicts the assumptions
and hence \adj{it holds}  $c_k = 0$, 
and similarly we get $b_k = 0$ from the second equation.
As in the first case,
we end up with a contradiction,
since $c_k = 0$ and $b_k = 0$ would imply that $(u^k, v^k)$ is a pair of singular vectors, see Lemma~\ref{lem:update_step_as}.
\end{proof}

\begin{remark}[$d_k$ is zero with probability zero]
    Notice that we almost surely have $d_k = \scp{w^k}{(A - V) x^k} \neq 0$,
    if $A - V\neq 0$
    and $(u^k, v^k)$ is not a pair of singular vectors to a common singular value.
    One can see this as follows:
    \begin{align*}
        d_k = 0 
        & \iff
        \scp{z^k - \scp{y^k}{z^k}y^k}{(A - V) (y^k - \scp{y^k}{v^k}v^k)} = 0 \\
        & \iff
        (A - V)(I_d - v^k(v^k)^*) y^k \in \langle(I_m - y^k(y^k)^*) z^k \rangle^\perp.
    \end{align*}
    Since $y^k \sim \cN(0, I_d)$ and $z^k \sim \cN(0, I_m)$ and are independent, Fubini's theorem shows that the probability for the latter inclusion to hold is zero similarly as in Remark~\ref{rem:drop_assumptions_in_algo_1}.
\end{remark}

We now show 
that if $(u^k, v^k)$ is not a pair of left and right singular vec\-tors 
corresponding to a common singular \adj{value}, 
then the same is true for $(u^{k+1}, v^{k+1})$ almost surely.
\adj{Setting $L = A - V$ in the definition of $\tilde{E}_{\sigma}$ in \eqref{eqn:E_sigma}, we obtain equivalently} 
\begin{align}
    \label{eqn:E_sigma_special}
    \tilde E_\sigma 
    &= \left\{ \begin{bmatrix} u \\ v \end{bmatrix} \in \RR^{m+d} 
    \;\middle|\; 
    \AV \begin{bmatrix} u \\ v \end{bmatrix} 
    = \sigma \begin{bmatrix} u \\ v \end{bmatrix} \right\} \notag \\
    &= 
    \left\{ \left[\begin{matrix}
        u \\ v
    \end{matrix}\right] \in \RR^{m + d} 
    \; \middle| \;
    \begin{array}{l}
        (A - V)^*(A - V)v = \sigma^2 v, \\
        (A - V)(A - V)^*u = \sigma^2 u
    \end{array}
    \right\}.
\end{align}

\begin{lemma}
    \label{lem:normdifference-dimension-directions}
    Let \(\sigma\) be a singular value of \(A - V \in \RR^{m \times d}\) 
    with multiplicity \(r < \max\{d,m\}\), 
    and \((u, v)^* \in \RR^{m+d}\) be not a pair of left and right singular vectors 
    corresponding to \(\sigma\). 
    Then the set of directions \((w, x)^* \in \{u\}^{\perp} \times \{v\}^{\perp} \)
    for which \((u + w, v + x) \in \tilde E_\sigma\) holds 
    \adj{is} an affine subspace with dimension of $r$, \adj{$r-1$, or $r-2$ 
    or is empty}.
\end{lemma}
\begin{proof}
    The proof is similar to~\cite[Lemma~2.9]{BLSW24},
    cf.~Appendix~\ref{sec:proof-normdifference-dimension-directions}.
\end{proof}

\begin{corollary}
    \label{cor:normdistance-2-no-sing-vectors-during-iteration}
    Let \adj{$\max\{m,d\} > 2$ and $A, V \in \RR^{m\times d}$} with $A \neq V$
    and suppose the multiplicity of all singular values of \(A - V\) 
    is less than \(\max\{d, m\} - 1\).
    Then, it is almost surely the case that 
    \begin{align*}
        a_k \neq 0, \quad 
        a_k b_k + c_k d_k \neq 0, \quad 
        a_k c_k + b_k d_k \neq 0\quad 
        \text{and}\quad 
        b_k c_k - a_k d_k \neq 0.
    \end{align*}
    holds throughout all iterations of Algorithm~\ref{alg:OpNorm3} 
    if \((u^0, v^0)\) is not a pair of left and right singular vectors 
    corresponding to a common singular value of \(A - V\).
\end{corollary}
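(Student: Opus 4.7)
The plan is to reduce the corollary to an inductive statement, namely that $(u^k, v^k)$ is almost surely never a pair of left and right singular vectors to a common singular value of $A-V$, and then to adapt the argument of Corollary~\ref{cor:normdistance-no-sing-vectors-during-iteration} from one to two step sizes.

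First, once we know that $(u^k, v^k)$ is not such a singular vector pair and that $A \neq V$, Proposition~\ref{prop:factos_alomst_surely_not_zero} immediately supplies the three conditions $a_k b_k + c_k d_k \neq 0$, $a_k c_k + b_k d_k \neq 0$ and $b_k c_k - a_k d_k \neq 0$ almost surely. The remaining condition $a_k \neq 0$ follows inductively from Remark~\ref{rem:almost_sure_non_zero_objective}: the initialization of Algorithm~\ref{alg:OpNorm3} guarantees $a_0 > 0$ almost surely (arguing as in Remark~\ref{rem:OpNorm2_detecting_null_maps}), and under the other non-vanishing conditions the sequence $(a_k)$ is strictly increasing, so $a_{k+1} > a_k > 0$ is preserved.

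Second, the corollary then reduces to the inductive step: assuming $(u^k, v^k)$ is not a singular vector pair of $A-V$, show that almost surely neither is $(u^{k+1}, v^{k+1})$. I would prove this by direct analogy with Corollary~\ref{cor:normdistance-no-sing-vectors-during-iteration}. Suppose $(u^{k+1}, v^{k+1})$ lies in the singular subspace $E_\sigma$ for some singular value $\sigma$ of $A-V$ with multiplicity $r_\sigma$. Since $E_\sigma$ is linear and the updates \eqref{eq:v-update-step2} merely rescale, this is equivalent to $(u^k + \tau_k w^k, v^k + \xi_k x^k) \in E_\sigma$. Lemma~\ref{lem:normdifference-dimension-directions} then forces $(\tau_k w^k, \xi_k x^k)$ into an affine subspace $L \subset u^{\perp} \times v^{\perp}$ of dimension at most $r_\sigma$.

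The main obstacle, which is the only genuinely new ingredient compared to Algorithm~\ref{alg:OpNorm2}, is the measure-zero argument with two independent rescalings $\tau$ and $\xi$ instead of one: the set of directions $(w, x) \in (T_{u^k}\sphere^{m-1}\cap \sphere^{m-1}) \times (T_{v^k}\sphere^{d-1}\cap \sphere^{d-1})$ for which there exist $\tau, \xi \in \RR$ with $(\tau w, \xi x) \in L$ is the image of the open subset $\{(\ell_1, \ell_2) \in L : \ell_1 \neq 0, \ell_2 \neq 0\}$ under the smooth map $(\ell_1, \ell_2) \mapsto (\ell_1 / \norm{\ell_1}, \ell_2 / \norm{\ell_2})$. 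This image has dimension at most $r_\sigma$ and is therefore null under the product Haar measure whenever the standing multiplicity bound $r_\sigma < \max\{m, d\} - 1$ holds, with any residual low-dimensional edge cases handled exactly as in the proof of Corollary~\ref{cor:normdistance-no-sing-vectors-during-iteration}. Taking the union over the finitely many singular values of $A - V$ preserves measure zero, which closes the induction.
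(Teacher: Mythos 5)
Your proof is correct and follows essentially the same route as the paper: the paper's own proof is a one-liner citing Remark~\ref{rem:almost_sure_non_zero_objective} for $a_k\neq 0$ and then Corollary~\ref{cor:normdistance-no-sing-vectors-during-iteration} together with Proposition~\ref{prop:factos_alomst_surely_not_zero}. You are in fact more careful than the paper at the one point that genuinely needs new work---adapting the measure-zero argument of Corollary~\ref{cor:normdistance-no-sing-vectors-during-iteration} to the update with two independent step sizes, where the admissible directions $(w,x)$ form the image of the punctured affine subspace $L$ under the componentwise normalization map---a step the paper's bare citation silently glosses over.
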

\begin{proof}
    From Remark~\ref{rem:almost_sure_non_zero_objective} we know $a_k\neq 0 $.
    For the rest we refer to Appendix~\ref{proof:normdistance-no-sing-vectors-during-iteration}
    and utilize Proposition~\ref{prop:factos_alomst_surely_not_zero}.
\end{proof}

\adj{The restriction of the dimensions $\max\{m,d\}>2$ 
is only technical:
If $m = d = 2$, then,
as in \cite{BLSW24},
a simple geometric discussion leads us to the optimal pair of singular vectors 
within one iteration and Algorithm~\ref{alg:OpNorm3} will stop.
If $\min\{m,d\} = 1$, then there is one positive singular value 
and Algorithm~\ref{alg:OpNorm3} will stop with the optimal value in the initial step.}

% \begin{remark}[Algorithm~\ref{alg:OpNorm3} is a stochastic projected gradient method]
%     \label{rem:stoch_gradient_ascent_op2}
%     Using the expression of $a_k b_k + c_k d_k$ in \eqref{eq:abcd_parameter},
%     whose sign defines the sign of the step size,
%     a similar discussion as in Remark~\ref{rem:stoch_gradient_ascent_op1}
%     yields the assertion.
% \end{remark}

\begin{remark}[Algorithm~\ref{alg:OpNorm3} is a stochastic projected gradient method]
  \label{rem:stoch_gradient_ascent_op2}
  \adj{Despite being a random search method, the method can still be interpreted as a stochastic gradient method due to the choice of step sizes:}
  Since the step size $\tau_{k}$ is positive for 
  \begin{align}
    \label{eq:double-half_sphere}
    a_k b_k + c_k d_k
    = \scp{\begin{pmatrix}
        w^k \\ x^k 
    \end{pmatrix}}{\begin{pmatrix}
        (A - V) v^k \scp{(A - V)v^k}{u^k} \\
        (A - V)^* u^k \scp{(A - V)x^k}{w^k}
    \end{pmatrix}}>0,
  \end{align}
  and negative for $a_k b_k + c_k d_k<0$, and thus, we effectively move in direction 
  \[
    \sign(a_k b_k + c_k d_k)\begin{pmatrix}
        x^k \\ w_{k}
    \end{pmatrix}.
  \]
  This means that the direction in which the algorithm 
  moves is uniformly dis\-tri\-bu\-ted on the product of two half spheres 
  in the space orthogonal to $(u^k, v^{k})^*$ 
  and defined by \eqref{eq:double-half_sphere}.
  Due to symmetry of the distribution, 
  the expected direction $(\EE(\sign(a_{k})x^{k}), \EE(\sign(a_k b_k + c_k d_k)w^{k}))^*$ 
  is a positive multiple of the projected version of $\AV (v^k, u^k)^*$ 
  on the space orthogonal to $(u^k,v^{k})$. 
  We conclude that Algorithm~\ref{alg:OpNorm3} 
  is a stochastic ascent algorithm in the sense that the direction 
  is a multiple of the projection 
  on the gradient $\AV (v^k, u^k)^*$ of the objective $\scp{u}{(A - V) v}$.
\end{remark}

We collect all our assumptions.
Notably,
if we choose $(u^0, v^0)$ uniformly on $\sphere^{m-1} \times \sphere^{d-1}$,
then the assumptions in Corollary~\ref{cor:normdistance-2-no-sing-vectors-during-iteration} guarantee 
that $(u^0, v^0)$ is not a pair of singular vectors to a common singular value of $A - V$,
and Corollary~\ref{cor:normdistance-2-no-sing-vectors-during-iteration}
guarantees that the sequences generated in Algorithm~\ref{alg:OpNorm3}
are well defined.
Hence,
those assumptions are sufficient to run the algorithm.

\begin{assumption} \mbox{}
    \label{ass:assump_2}
    Let $0 \neq A - V \in \RR^{m \times d}$
    \adj{have} no singular value with multiplicity larger or equal than $\max\{m,d\} - 1$.
\end{assumption}

From now on,
as long as nothing else is said,
we assume that Assumption~\ref{ass:assump_2} holds true.
However,
we will show in the end
that even the Assumption~\ref{ass:assump_2} can be dropped,
see Remark~\ref{rem:drop_assumptions_in_algo_1}.
Assumption~\ref{ass:assump_2} is equivalently fulfilled,
if $A - V$  has singular values with multiplicity at most $\max\{m,d\}-2$.

\begin{corollary}
    \label{cor:monotonicity_abs_ak_in_OpNorm3}
    The sequence $(a_k)_{k \in \N}$
    generated by Algorithm~\ref{alg:OpNorm3}
    is strictly increasing and almost surely convergent.
\end{corollary}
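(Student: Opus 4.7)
The plan is to combine three ingredients: the almost-sure well-definedness of the step sizes from the preceding results, strict ascent of the squared objective, and boundedness by the operator norm.

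First, I would observe that under Assumption~\ref{ass:assump_2}, Corollary~\ref{cor:normdistance-2-no-sing-vectors-during-iteration} guarantees that, almost surely and throughout the iteration, $a_k \neq 0$ and $a_k b_k + c_k d_k \neq 0$ (together with the other non-degeneracy conditions). In particular, Proposition~\ref{prop:one_reduced_step_size} applies and the optimal pair $(\tau_k, \xi_k)$ is well-defined and finite at every step.

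Second, I would establish strict ascent. Evaluating~\eqref{eq:root_tau} at $\tau = 0$ gives $\bigl(q_k^2\bigr)'(0) = 2(a_k b_k + c_k d_k) \neq 0$ almost surely. Hence $\tau = 0$ is not a critical point of $\tau \mapsto q_k^2(\tau)$, and the sign choice in Proposition~\ref{prop:one_reduced_step_size} precisely selects the branch on which $q_k^2$ is strictly larger than its value at the origin. Since $q_k^2(0,0) = a_k^2$ and, by~\eqref{eq:qk2_expressed_as_product} (or the interpretation of $q_k$ as the objective after one update), $q_k^2(\tau_k, \xi_k) = a_{k+1}^2$, we obtain
\[
a_{k+1}^2 \;>\; a_k^2 \quad \text{almost surely.}
\]
Combined with $a_k \geq 0$ for all $k$ — which follows from the initialization $a_0 \geq 0$ together with an inductive application of Remark~\ref{rem:nonneg-objective} (or equivalently Remark~\ref{rem:almost_sure_non_zero_objective}) — this yields $a_{k+1} > a_k$ almost surely.

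Third, I would invoke Cauchy--Schwarz for the upper bound: because $\|u^k\| = \|v^k\| = 1$, one has $a_k = \scp{u^k}{(A-V)v^k} \leq \|A - V\|$. A monotonically increasing, bounded sequence converges, proving the claim. The only mildly delicate point is ensuring the non-degeneracy conditions hold simultaneously for every $k$ and not just initially, which is exactly what the inductive structure of Corollary~\ref{cor:normdistance-2-no-sing-vectors-during-iteration} provides; with that in hand, the argument above proceeds along a single sample path.
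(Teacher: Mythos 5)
Your proposal is correct and takes essentially the same route as the paper's own (very terse) proof, which likewise combines the almost-sure well-definedness of $(\tau_k,\xi_k)$, the initialization $a_0>0$, and the bound $a_{k+1}=q_k(\tau_k)\leq\norm{A-V}$ with monotone convergence. You simply make explicit the strictness of the ascent via $\bigl(q_k^2\bigr)'(0)=2(a_kb_k+c_kd_k)\neq 0$ and the non-negativity of $a_k$ from Remark~\ref{rem:nonneg-objective}, details the paper leaves to Remark~\ref{rem:almost_sure_non_zero_objective}.
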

\begin{proof}
    Since the stepsizes $\tau_k$ and $\xi_k$ are well defined \adj{almost surely}
    and the method is initialized with $a_{0}>0$,
    the sequence $(a_k)_{k \in \N}$ is increasing,
    \adj{cf.~Remark~\ref{rem:almost_sure_non_zero_objective}}. 
    Furthermore, 
    by construction,
    we have $a_{k+1} = q_k(\tau_k) \leq \norm{A - V}$,
    which shows boundedness of the sequence $(a_{k})_{k \in \N}$.
\end{proof}

\subsection{Almost sure convergence to $\norm{A-V}$}
\label{sec:convergence_as_two}

In this subsection,
we give the proof for the almost sure convergence of the sequence 
of objective values generated by Algorithm~\ref{alg:OpNorm3} to the largest singular value.
Additionally, we prove that 
the sequence of the vectors $(u^k, v^k)_{k \in \N}$
converges to a pair of singular vectors corresponding to $\sigma_{1} = \norm{A-V}$
\adj{if the corresponding space is one-dimensional 
and otherwise we show the almost sure convergence
of the random variables $(u^k, v^k)$
towards the leading singular vectors space $\tilde E_{\sigma_1}$.
}

\begin{lemma}
    \label{lem:asscent_lem_2}
    \adj{Let $(u^k, v^k)_{k \in \NN}$ and $(a_k)_{k \in \NN}, (b_k)_{k \in \NN}, (c_k)_{k \in \NN}$ 
    and $(d_k)_{k \in \NN}$ be the sequences of random variables generated} from Algorithm~\ref{alg:OpNorm3} 
    with step sizes $(\tau^k, \xi^k)_{k \in \NN}$ from Proposition~\ref{prop:one_reduced_step_size}, 
    i.e. \eqref{eqn:opt_crit_two_stepsizes_solved},
    it holds
    \begin{align*}
        & \scp{u^{k+1}}{(A - V) v^{k+1}}^2 - \scp{u^k}{(A - V) v^k}^2 \\
        & = c_k^2 + \tau_k (a_k b_k + c_k d_k) \\
        & = b_k^2 + \xi_k (a_kc_k + b_k d_k) \\
        & = \frac{1}{2}(b_k^2 + c_k^2 + \tau_k (a_k b_k + c_kd_k) + \xi_k(a_kc_k + b_k d_k)).
    \end{align*}
\end{lemma}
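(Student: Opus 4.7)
The plan is to start from the closed-form expression for the optimal value derived in Remark~\ref{rem:rewritten_squared_objective}, namely
\[
a_{k+1}^2 \;=\; q_k^2(\tau_k,\xi_k) \;=\; (a_k + \tau_k b_k)(a_k + \xi_k c_k),
\]
which comes from equation~\eqref{eq:qk2_expressed_as_product}. Expanding this product and subtracting $a_k^2$ immediately gives the common baseline expression
\[
a_{k+1}^2 - a_k^2 \;=\; \tau_k a_k b_k + \xi_k a_k c_k + \tau_k \xi_k b_k c_k,
\]
which is symmetric in how $\tau_k$ and $\xi_k$ enter. From here it remains only to rewrite this symmetric expression in each of the three desired asymmetric forms by invoking the first-order optimality conditions.

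The key step is to use the two equivalent optimality conditions from~\eqref{eqn:opt_crit_two_stepsizes_solved} (equivalently, equations~\eqref{eq:opt_tau} and~\eqref{eq:opt_xi}):
\[
\tau_k(a_k + \xi_k c_k) \;=\; b_k + \xi_k d_k, \qquad \xi_k(a_k + \tau_k b_k) \;=\; c_k + \tau_k d_k.
\]
For the first claimed identity, solve the second condition for $\xi_k a_k = c_k + \tau_k d_k - \tau_k \xi_k b_k$ and substitute into $\xi_k a_k c_k$ in the baseline expression; the $\tau_k \xi_k b_k c_k$ terms cancel and what remains is $c_k^2 + \tau_k(a_k b_k + c_k d_k)$. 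For the second identity, symmetrically solve the first condition for $\tau_k a_k = b_k + \xi_k d_k - \tau_k \xi_k c_k$ and substitute into $\tau_k a_k b_k$; again a cancellation occurs and what remains is $b_k^2 + \xi_k(a_k c_k + b_k d_k)$. The third identity is then just the arithmetic mean of the first two, so it follows for free.

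I do not expect any substantive obstacle: everything is elementary algebra once \eqref{eq:qk2_expressed_as_product} and the two optimality conditions are on the table. The only care needed is that these formulas implicitly require the denominators from~\eqref{eqn:opt_crit_two_stepsizes_solved} to be nonzero, i.e.\ $a_k + \tau_k b_k \neq 0$ and $a_k + \xi_k c_k \neq 0$; but this was already established in the proof of Proposition~\ref{prop:first_order_optimality} under the standing Assumption~\ref{ass:assump_2} via Corollary~\ref{cor:normdistance-2-no-sing-vectors-during-iteration}, so no extra work is needed here.
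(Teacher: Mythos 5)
Your proof is correct and follows essentially the same route as the paper: both start from the factorization $a_{k+1}^2 = (a_k + \tau_k b_k)(a_k + \xi_k c_k)$ from Remark~\ref{rem:rewritten_squared_objective} and then eliminate one step size via the first-order optimality conditions~\eqref{eqn:opt_crit_two_stepsizes_solved}, obtaining the third identity as the mean of the first two. The only cosmetic difference is that you expand the product into the symmetric form $\tau_k a_k b_k + \xi_k a_k c_k + \tau_k\xi_k b_k c_k$ and then substitute the relation $\xi_k a_k = c_k + \tau_k d_k - \tau_k\xi_k b_k$, whereas the paper substitutes the explicit quotient $\xi_k = (c_k+\tau_k d_k)/(a_k+\tau_k b_k)$ directly into the product; these are the same algebraic manipulation in different order.
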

\begin{proof}
    From Remark~\ref{rem:rewritten_squared_objective},
    and by using the optimality criteria from Lemma~\ref{lem:two_step_sizes} for $\xi_k$,
    we have
    \begin{align*}
        q^2_k(\tau_k,\xi_k)
        & = (a_k + \xi_k c_k)(a_k + \tau_k b_k) \\
        & = \left(a_k + \frac{c_k + \tau_k d_k}{a_k + \tau_k b_k}c_k\right)
        (a_k + \tau_k b_k) \\
        & = a_k^2 + c_k^2 + \tau_k (a_k b_k + c_k d_k).
    \end{align*}
    We use
    $q^2_k(\tau_k,\xi_k) = a_{k+1}^2 = \scp{u^{k+1}}{(A - V) v^{k+1}}^2$
    and $a_k^2 = \scp{u^k}{(A - V) v^k}^2$ and get the first assertion.
    The second assertion 
    follows from the rewritten objective 
    by replacing instead $\tau_k = \tfrac{b_k + \xi_k d_k}{a_k + \xi_k c_k}$.
    The last assertion is just a combination of the first and second one.
\end{proof}

Next,
we can give a proof of almost sure convergence of some of the sequences 
given by some of the parameters $\{a_k, b_k, c_k, d_k\}_{k \in \N}$
generated and used in the algorithms.

\begin{corollary}
    \label{cor:c_k_to_0}
    \adj{Let $(a_k)_{k \in \NN}, (b_k)_{k \in \NN}, (c_k)_{k \in \NN}$ and $(d_k)_{k \in \NN}$ 
    be the sequences of random variables generated} by Algorithm~\ref{alg:OpNorm3}.
    Then, it holds $c_k\to 0$ and $\tau_k(a_k b_k + c_k d_k) \to 0$
    for $k \to \infty$ almost surely.
\end{corollary}
\begin{proof}
    By Lemma~\ref{lem:asscent_lem_2},
    the optimal choice of $\tau_k$ from Lemma~\ref{prop:one_reduced_step_size}
    and the fact that the optimal $\tau_k$ and $a_k b_k + c_k d_k$
    have the same sign by Proposition~\ref{prop:one_reduced_step_size},
    we have $\tau_k( a_k b_k + c_k d_k) \geq 0$
    and therefore conclude 
    \begin{align*}
        0 
        \leq 
        c_k^2 + \tau_k( a_k b_k + c_k d_k)
        = q^2_k(\tau_k) - a_k^2
        = a_{k+1}^2 - a_k^2
        \to 0,
        \quad k \to \infty
    \end{align*}
    since $a^2_k$ is convergent by Corollary~\ref{cor:monotonicity_abs_ak_in_OpNorm3}.
\end{proof}

\begin{remark}[Solving the ascent via $\xi$ instead]
    \label{rem:b_k_to_0}
    Reducing our 2-dimensional objective into a 1-dimensional objective $q_k(\tau, \xi)$,
    can be also done for $\xi$, 
    by plugging in the optimality criteria from Lemma~\ref{lem:two_step_sizes},
    which yields a similar result as in Lemma~\ref{prop:one_reduced_step_size}.
    From that we also obtain $b_k\to 0$ and $0 \leq \xi_k(a_k c_k + b_k d_k) \to 0$
    for $k \to \infty$ almost surely.
\end{remark}

\begin{corollary}
    \label{cor:ab+cd_and_b_to_0}
    \adj{Let $(a_k)_{k \in \NN}, (b_k)_{k \in \NN}, (c_k)_{k \in \NN}$ and $(d_k)_{k \in \NN}$ 
    be the sequences of random variables generated} by Algorithm~\ref{alg:OpNorm3}.
    Then, it holds $a_k b_k + c_k d_k  \to 0$ and $b_k\to 0$
    for $k \to \infty$ almost surely.
\end{corollary}
\begin{proof}
    (Similar to the proof of \cite[Lem.~2.13]{BLSW24}.)
    In more detail,
    we have that $\tau_k$ is a solution of $q_k^2(\tau_k)=0$ in \eqref{eq:root_tau}.
    This means that
    \begin{align*}
        a_k b_k + c_k d_k 
        & = \tau_k (a_k^2 - b_k^2 + c^2_k - d_k^2)
        + \tau_k^2(a_k b_k + c_k d_k) \\
        & = \tau_k(a_k^2 - b_k^2 + c^2_k - d_k^2 + \tau_k (a_k b_k + c_k d_k))
    \end{align*}
    such that 
    \begin{align*}
        (a_k b_k + c_k d_k)^2
        = \tau_k(a_k b_k + c_k d_k)
        (a_k^2 - b_k^2 + c^2_k - d_k^2 + \tau_k (a_k b_k + c_k d_k)).
    \end{align*}
    Since we have $\tau_k (a_k b_k + c_k d_k) \to 0$
    for $k \to \infty$ almost surely by~Corollary~\ref{cor:c_k_to_0},
    and $a_k^2 - b_k^2 + c^2_k - d_k^2$
    is bounded by $4\norm{A - V}^2$,
    we have $a_kb_k + c_kd_k \to 0$ for $k \to \infty$ almost surely. Now as $a_k b_k + c_k d_k \to 0$ and $c_k \to 0$ for $k \to \infty$ a.s.,
    we also have $a_kb_k \to 0$ for $k \to \infty$ a.s.
    Since we have $\norm{A - V} \geq a_k \geq a_0 > 0$ under Assumption~\ref{ass:assump_2},
    we necessarily have $b_k \to 0$ for $k \to \infty$ a.s.
\end{proof}

Finally,
we have everything to prove the almost sure convergence 
of the se\-quence $(a_k)_{k \in \N}$ induced by $((u^k, v^k))_{k \in \N}$ 
from Algorithm~\ref{alg:OpNorm3} to a singular value.

%\begin{lemma}
%    \label{lem:E_xxT}
%    Assume that $v\in\sphere^{d-1}$, $y\sim\mathcal N(0,I_d)$ and 
%    \begin{align*}
%        x = \frac{y-\scp{y}{v}v}{\|y-\scp{y}{v}v\|}.
%    \end{align*}
%    Then it holds that 
%    \begin{align*}
%        \EE[xx^*] = \frac{1}{d-1} \big(I_d - vv^*\big).
%    \end{align*}
%\end{lemma}
%
%\begin{proof}
%    This can be done by using an orthonormal basis of $\{v\}^\perp \subset\RR^d$
%    and analy\-zing the coefficient of $y-\scp{y}{v}v$.
%\end{proof}

\begin{lemma}
    \label{lem:E_c_sqr}
    \adj{Let $(u^k, v^k)_{k \in \NN}$ and $(b_k)_{k \in \NN}, (c_k)_{k \in \NN}$ 
    be the sequences of random variables 
    generated by Algorithm~\ref{alg:OpNorm3}.
    Then, it holds
    \begin{align*}
        \EE\left[ c_k^2 \mid u^k, v^k \right]
            = \tfrac{1}{d-1}
            \left\| \left(I_d-v^k(v^k)^*\right)(A-V)^*u^k \right\|^2\\
        \EE\left[ b_k^2 \mid u^k, v^k \right]
        = \tfrac{1}{m-1}
        \left\| \left(I_m-u^k(u^k)^*\right)(A-V)v^k \right\|^2.
    \end{align*}   } 
\end{lemma}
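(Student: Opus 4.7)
The plan is to apply Lemma~\ref{lem:E_xxT} directly after rewriting $c_k^2$ as a quadratic form in $x^k$. First, I would observe that by definition of $c_k$ in \eqref{eq:q_k_abcd_k},
\[
c_k = \scp{u^k}{(A-V)x^k} = \scp{(A-V)^* u^k}{x^k}.
\]
Since $x^k \in (v^k)^\perp$ by construction in \eqref{eq:uv-update-step2}, the inner product is unchanged when we replace $(A-V)^* u^k$ by its projection onto $(v^k)^\perp$. So, setting $z := (I_d - v^k(v^k)^*)(A-V)^* u^k$, we have $c_k = \scp{z}{x^k}$ and hence $c_k^2 = z^* x^k (x^k)^* z$.

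Next, taking the conditional expectation given $u^k, v^k$ (under which $z$ is deterministic), linearity of the trace gives
\[
\EE\left[ c_k^2 \mid u^k, v^k \right] = z^* \EE\left[x^k (x^k)^*\right] z.
\]
Now I would invoke Lemma~\ref{lem:E_xxT} to replace $\EE[x^k (x^k)^*]$ by $\tfrac{1}{d-1}(I_d - v^k(v^k)^*)$, and use the fact that $z$ lies in $(v^k)^\perp$ so that $(I_d - v^k(v^k)^*)z = z$. This yields
\[
\EE\left[ c_k^2 \mid u^k, v^k \right] = \tfrac{1}{d-1}\, z^* (I_d - v^k(v^k)^*) z = \tfrac{1}{d-1}\, \|z\|^2 = \tfrac{1}{d-1}\, \| (I_d - v^k(v^k)^*)(A-V)^* u^k \|^2,
\]
which is the first claimed identity.

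The second identity follows by the exact analogous argument with the roles of input and output space swapped: write $b_k = \scp{w^k}{(A-V)v^k}$, use that $w^k \in (u^k)^\perp$ with $w^k$ uniformly distributed on the unit sphere in $(u^k)^\perp \subset \RR^m$, set $z' := (I_m - u^k(u^k)^*)(A-V)v^k$, and apply Lemma~\ref{lem:E_xxT} with $m$ replacing $d$ and $u^k$ replacing $v^k$. No obstacle is expected here; the argument is a direct bilinear computation combined with the moment formula just established.
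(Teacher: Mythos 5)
Your proposal is correct and is exactly the argument the paper intends: its proof is the one-line instruction to rewrite $c_k^2$ and $b_k^2$ as quadratic forms in $x^k$ and $w^k$ via \eqref{eq:q_k_abcd_k} and apply Lemma~\ref{lem:E_xxT}, which is precisely what you carry out (the preliminary projection of $(A-V)^*u^k$ onto $(v^k)^\perp$ is harmless since $(I_d-v^k(v^k)^*)$ is idempotent). No gaps.
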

\begin{proof}
    \adj{Rewrite the parameters $c_k^2$ and $b_k^2$
    from \eqref{eq:q_k_abcd_k}
    and use \cite[Lem.~2.14]{BLSW24}.} %Lemma~\ref{lem:E_xxT}.
\end{proof}

\adj{For the next result 
we introduce the distance of a set and a vector by 
\begin{align}
    \label{eq:dist_sets}
    \textup{dist}(C, x) \coloneqq \inf_{y \in C} \norm{x - y}.
\end{align}
This allows to prove the almost sure convergence 
of a subsequence generated by Algorithm~\ref{alg:OpNorm3} 
into a singular vector space.
}

\begin{proposition}
    \label{prop:acc_point_opt_pair_two_stepsizes}
    \adj{Let $(u^k, v^k)_{k \in \NN}$ be the sequence of random variables 
    generated by Algorithm~\ref{alg:OpNorm3}.
    Then,
    there exists a subsequence $(u^{k_j}, v^{k_j})_{j \in \NN}$ 
    and a real-valued ranodom variable $\sigma$
    with values in the set of singular values of $A - V$ 
    such that 
    \[
        \textup{dist}(\tilde E_{\sigma}, (u^{k_j}, v^{k_j})) \to 0, \quad j \to \infty
        \quad \textrm{and} \quad
        \abs{\scp{v^k}{(A - V) v^k}} \nearrow \sigma, \quad k \to \infty.
    \]}
\end{proposition}
\begin{proof}
    \adj{We adapt the strategy of the proof \cite[Prop.~2.15]{BLSW24}.
    According to Corollary~\ref{cor:c_k_to_0} and Corollary~\ref{cor:ab+cd_and_b_to_0}, 
    we know the almost sure convergence of the sequences $(b_k)_{k \in \N}$
    and $(c_k)_{k \in \N}$ to zero,
    and $b_k^2, c_k^2 \leq \norm{A - V}^2$.
    Then,
    Lebesgue's dominated convergence theorem shows 
    that $\EE[b_k^2]\to 0$ and $\EE[c_k^2]\to 0$ for $k \to \infty$. 
    Hence, 
    by Lemma~\ref{lem:E_c_sqr} we obtain 
    \begin{align*}
        \frac{1}{d-1} \EE\Big[ \|\big(I_d-v^k(v^k)^*\big)(A-V)^*u^k\|_2^2 \Big] 
        = 
        \EE\Big[\EE\big[ c_k^2 \mid u^k, v^k \big]\Big]
        =
        \EE\big[ c_k^2\big] \to 0,
    \end{align*}
    and 
    \begin{align*}
        \frac{1}{m-1} \EE\Big[ \|\big(I_d-u^k(u^k)^*\big)(A-V)v^k\|_2^2 \Big] 
        = 
        \EE\Big[\EE\big[ b_k^2 \mid u^k, v^k \big]\Big]
        =
        \EE\big[ b_k^2\big] \to 0,
    \end{align*}
    for $k \to \infty$,.
    The latter equations 
    imply a subsequence $(u,v)^{k_j}$ of $(u,v)^k$
    such that
    \begin{align*}
        \begin{cases}
            \|\big(I_d-u^{k_j}(u^{k_j})^*\big)(A-V)v^{k_j}\|_2^2 \to 0, \\
            \|\big(I_d-v^{k_j}(v^{k_j})^*\big)(A-V)^*u^{k_j}\|_2^2 \to 0
        \end{cases}
        \quad j \to \infty
        \quad \textrm{a.s.} % \; w \in \Omega.
    \end{align*}
    This in turn shows that 
    \begin{align*}
        \begin{cases}
            \textrm{dist}(\ker(I_d-u^{k_j}(u^{k_j})^*), (A-V)v^{k_j}) \to 0, \\
            \textrm{dist}(\ker(I_d-v^{k_j}(v^{k_j})^*), (A-V)^*u^{k_j}) \to 0
        \end{cases}
        \quad j \to \infty
        \quad \textrm{a.s.} % \; w \in \Omega,
    \end{align*}
    and hence 
    \begin{align*}
        \begin{cases}
            \textrm{dist}(\textrm{span}(u^{k_j}), (A-V)v^{k_j}) \to 0, \\
            \textrm{dist}(\textrm{span}(v^{k_j}), (A-V)^*u^{k_j})  \to 0
        \end{cases}
        \quad j \to \infty
        \quad \textrm{a.s.} % \; w \in \Omega.
    \end{align*}
    Therewith,
    two sequences of random variables $(\sigma_j)_{j \in \NN}$ and $(\eta_j)_{j \in \NN}$ exists
    such that 
    \begin{align*}
        \begin{cases}
            \norm{(A-V) v^{k_j} - \sigma_j  u^{k_j}}  \to 0,\\
            \norm{(A-V)^* u^{k_j} - \eta_j  v^{k_j}} \to 0
        \end{cases}
        \quad j \to \infty
        \quad \textrm{a.e.} \; w \in \Omega.
    \end{align*}
    By the almost sure convergence of the objective value $a_k = \scp{u^k}{(A - V) v^k}$,
    we have that $\lim_{j \to \infty} \sigma_j = \lim_{j \to \infty} \eta_j = \sigma$ almost surely.
    This shows
    that $\sigma$ takes values in the set of singular values of $A - V$, cf.~\eqref{eq:1},
    yielding the first assertion.
    The convergence of the entire sequence 
    to the corresponding singular value
    follows from the monotonicity,
    see Remark~\ref{rem:almost_sure_non_zero_objective},
    since 
    \begin{align*}
        \abs{\scp{u^{k_j}}{(A - V) v^{k_j}}}
        \leq \abs{\scp{u^k}{(A - V) v^k}}
        \leq \abs{\scp{u^{k_{j+1}}}{(A - V) v^{k_{j+1}}}}
        %\quad \forall k \in \{k_j,...,k_{j+1}\}
    \end{align*}
    for all $k \in \{k_j,...,k_{j+1}\}$,
    almost surely.}
\end{proof}

The next statement will help to show that \adj{$(a_{k})_{k \in \NN}$} does indeed converge 
to the maximal singular value, i.e. the operator norm of $A - V$.

\begin{lemma} 
    \label{lem:normdifference-B_Lemma}
    \adj{Let $(u^k, v^k)$ be the sequence of random variables 
    generated by Algorithm~\ref{alg:OpNorm3}.}
    For the set
    \begin{align*} 
        B \coloneqq \{(u,v)\in\sphere^{m-1}\times\sphere^{d-1}: \abs{\scp{u}{(A-V)v}} > \sigma_2(A-V)\}
    \end{align*}
    the following holds:
    \begin{enumerate}[i)]
        \item
        If $(u^k,v^{k})\in B$ holds almost surely, 
        then $v^{k+1}\in B$ holds almost surely as well.
        \item
        If $(u^{k_0}, v^{k_0})\in B$ holds for some $k_0 \in \NN$ almost surely, 
        then almost surely holds 
        \[\lim_{k\to\infty} \abs{\scp{u^k}{(A-V)v^k}}
        = \sigma_1(A-V) = \norm{A-V}.\]
        \adj{Moreover, it holds $\textup{dist}(\tilde E_{\sigma_1}, (u^k, v^k)) \to 0$
        for $k \to \infty$ almost surely.}
    \end{enumerate}
\end{lemma}
\begin{proof}
    Utilizing Proposition~\ref{prop:acc_point_opt_pair_two_stepsizes}
    and the monotonicity of $\abs{\scp{u^k}{(A-V)v^k}}$, 
    see Remark~\ref{rem:almost_sure_non_zero_objective}, yields the assertion.
    \adj{We refer to Appendix~\ref{proof:normdifference-B_Lemma}} 
    and to \cite[Lem.~2.16]{BLSW24} as it is a similar proof technique.
\end{proof}

The next technical proposition
from \cite{BLSW24}
will help us to prove that in each step the probability
that the next iterate will fulfill 
$\abs{\scp{u^{k+1}}{(A - V)v^{k+1}}} > \sigma_{2}$
is bounded uniformly from below by a positive constant.

\begin{proposition}[cf.~{\cite[Lem.~2.17]{BLSW24}}] \label{prop:inf_probability}
    %Let $\varepsilon>0$ and $v, v^*\in\sphere^{d-1}$ and define
    %\begin{align*}
    %    D_v 
    %    \coloneqq 
    %    \left\{x\in \{v\}^\perp \cap \sphere^{d-1}
    %    \;\middle|\;
    %    \tfrac{v+\tau x}{\norm{v+\tau x}} \in B_\varepsilon(v^*)\cup B_\varepsilon(-v^*) 
    %    \quad\text{for some}\quad 
    %    \tau\in\RR\right\}
    %\end{align*}
    %and denote by $\mathcal{U}_{v}$ the uniform distribution on the unit sphere in $\{v\}^\perp \cap \sphere^{d-1}$. 
    %\adj{Then there exists a constant $p_{\theta,d}$ independent of $v$ and $\tilde{v}$ such that 
    %\begin{align}\label{eq:def_p_varepsilon}
    %    \inf_{v\in\sphere^{d-1}} \mathcal{U}_v\big( D_v \big) =: p_{\theta,d}>0.
    %\end{align}}
    \adj{Let $\theta>0$ and $v,\tilde v\in\sphere^{d-1}$ and define the hyperspherical cap around $\tilde{v}$ with polar angle $\theta$ by
    \begin{align*}
      B_{\theta}(\tilde{v}) &\coloneqq \left\{ w\in\sphere^{d-1}\ \middle|\ \scp{w}{\tilde{v}} \geq \cos(\theta) \right\} 
    \end{align*}
    further define
    \begin{align*}
    D_v & := \left\{x\in \{v\}^{\bot} \,\cap\, \sphere^{d-1} \;\middle|\;  \frac{v+\tau x}{\norm{v+\tau x}} \in B_{\theta}(\tilde v)\cup B_{\theta}(-\tilde v) \text{ for some } \tau\in\RR\right\}.
  \end{align*}
  % and denote by $\mathcal{U}_{v}$ the uniform distribution on the unit sphere in $\{v\}^{\bot}$. 
 Then there exists a constant $p_{\theta,d}$ independent of $v$ and $\tilde{v}$ such that 
  \begin{align}\label{eq:def_p_varepsilon}
    \inf_{v\in\sphere^{d-1}} \sigma_{\sphere^{d-1} \cap \{v\}^\perp}\big( D_v \big) = p_{\theta,d}>0.
  \end{align}}
\end{proposition}

Now, to prove the convergence almost surely,
the uniform bound from Pro\-po\-si\-tion~\ref{prop:inf_probability} is crucial,
and the independence of the step sizes is necessary 
for the following proof strategy.

\begin{theorem}
    \label{thm:convergence_B}
    \adj{Let $(u^k, v^k)$ the sequence of random variables 
    generated by Algorithm~\ref{alg:OpNorm3}.}
    Then, it holds 
    $\lim_{k\to\infty}\abs{\scp{u^{k}}{(A - V) v^{k}}} = \sigma_1$ 
    almost surely. 
    More\-over, 
    if the singular vector space to $\sigma_1$ is one-dimensional, 
    then $(u^{k}, v^{k})$ converges almost surely 
    \adj{to an optimal pair in $\sphere^{m-1}\times\sphere^{d-1}$ corresponding to $\sigma_1$}.
\end{theorem}
\begin{proof}
    We argue similar to the proof of \cite[Thm.~2.19]{BLSW24}:
    \adj{We consider the set 
    $B = \{(u, v) \in \sphere^{m-1}\times \sphere^{d-1} \colon \abs{\scp{u^{k}}{(A - V) v^{k}}} > \sigma_2\}$
    from Lemma~\ref{lem:normdifference-B_Lemma}
    and define 
    \begin{align*}
        \Omega_k := \{w \in \Omega \mid (u^{k}(w), v^{k}(w)) \not\in B\}.
    \end{align*}}
    \adj{By Lemma~\ref{lem:normdifference-B_Lemma}~i)}
    we immediately have $\Omega_{k+1} \subseteq \Omega_k$ and 
    \begin{align*}
        \{w \in \Omega \mid \abs{\scp{u^{k}(w)}{(A - V) v^{k}(w)}} \not\to \sigma_1\}
        = 
        \bigcap_{k = 0}^\infty \Omega_k.
    \end{align*}
    Since $B$ is relative open
    \adj{and due to the continuity of the objective value,}
    we find \adj{$\theta, \theta' > 0$}
    such that 
    \begin{align*}
        \adj{C_{\theta,\theta'} 
        \coloneqq 
        C_{\theta'} \times C_{\theta} 
        \coloneqq
        \big(B_{\theta'}(\tfrac{(A - V) v_1}{\norm{(A - V) v_1}}) \cap \sphere^{m-1}\big)
        \times \big(B_\theta(v_1) \cap \sphere^{d-1}\big) 
        \subset B}
    \end{align*}
    for the optimal pair of singular vectors 
    $(u_1:=\tfrac{(A - V) v_1}{\norm{(A - V) v_1}}, v_1) \in B$.
    We employ now a similar inequality as in the proof of \cite[Thm.~2.19]{BLSW24},
    using \adj{the independence of the conditional distribution of $w\mid u$ and $x\mid v$
    defined by the Markov kernels $K_{w\mid u}$, respectively $K_{x\mid v}$,}
    such that 
    \begin{align}
        \label{eq:p_independance}
        \adj{\PP_{(u^{k}, v^{k}, z^{k}, y^{k})}
        = K_{(w^k,x^k)\mid(u^k,v^k)} \times \PP_{(u^{k}, v^{k})}
        = K_{w\mid u}\times K_{x\mid v} \times \PP_{(u^{k}, v^{k})}}
    \end{align}
    \adj{with kernel from Remark~\ref{rem:distribution-x-w-const}}
    and using Proposition~\ref{prop:inf_probability}
    separately for the directions $w^k$ 
    to point towards an \adj{$\theta'$}-neigh\-bor\-hood of $\tfrac{(A - V)v_1}{\norm{(A - V) v_1}}$ 
    in $\sphere^{m-1}$
    and independently $x^k$ to an \adj{$\theta$}-neigh\-bor\-hood of $v_1$ 
    in $\sphere^{d-1}$.
    In the following we prove
    \begin{align}
        \label{eq:bound_p}
        \PP(\Omega_{k+1}) 
        \leq \underbracket{(1 - p_{\adj{\theta'},m})(1 - p_{\adj{\theta}, d})}_{=: p} \PP(\Omega_k).
    \end{align}
    Then,
    the assertion follows by induction, 
    since $1 > p > 0$ is a uniform bound 
    independent of $k \in \N$, which means we have
    \begin{align*}
        \PP(\abs{\scp{u^{k}}{(A - V) v^{k}}} \not\to \sigma_1)
        \leq  \lim_{k \to \infty} \PP(A_{k})
        \leq \PP(A_0) \lim_{k \to \infty} p^k
        = 0.
    \end{align*}
    
    To show \eqref{eq:bound_p},
    we denote the update of Algorithm~\ref{alg:OpNorm3} in the output space and input space of $A-V$
    by $U_u(u^k, v^k, w^k)$ and $U_v(u^k, v^k, x^k)$, respectively,
    \adj{i.e. we have 
    $$
        U_u(u, v, w) \coloneqq \tfrac{u + \xi_{u,v} w}{\norm{u + \xi_{u,v} w}},
        \quad 
        U_v(u, v, x) \coloneqq \tfrac{v + \tau_{u,v} x}{\norm{v + \tau_{u,v} x}},
    $$
    with deterministic $\xi_{u,v}$ and $\tau_{u,v}$ once $u,v$ and $w,x$ are fixed 
    from Proposition~\ref{prop:one_reduced_step_size}.}
    % Furthermore,
    % by $S_u(u^k, y^k)$ and $S_v(v^k, z^k)$ we denote the update 
    % of the normally distributed directions $y^k$ and $z^k$,
    % \adj{i.e. we replace the step directions $(w^k, x^k)$ 
    % by the formulas from \eqref{eq:uv-update-step2}
    % with $w = P_u y / \norm{P_u y}$ and $x = P_v z / \norm{P_v z}$},
    % such that 
    % \begin{align*}
    %     u^{k+1} = S_u(u^k, v^k, y^k)
    %     \quad\text{and}\quad
    %     v^{k+1} = S_v(u^k, v^k, z^k).
    % \end{align*}
    \adj{We obtain by Lemma~\ref{lem:normdifference-B_Lemma} (ii) and the law of total probability 
    \begin{align*}
            \PP(\Omega_{k+1})
            &= \iint\limits_{\sphere^{m-1}\times\sphere^{d-1}} 
            \ M(u,v)  
            \ \dd \PP_{(u^k,v^k)}(u,v) \notag \\
            &= \iint\limits_{\sphere^{m-1}\times\sphere^{d-1}\setminus B}  
            \ M(u,v)
            \ \dd \PP_{(u^k,v^k)}(u,v).
            %\label{eq:prob-Ak+1}
    \end{align*}}
    and for any $(u,v) \in\sphere^{m-1}\times\sphere^{d-1}$ we can rewrite
    (using Fubini's lemma)
    \adj{\begin{align*}
        M(u,v) 
        &= 
        \hspace{-1.3cm}\iint\limits_{(\sphere^{m-1} \cap \{u\}^\perp)\times(\sphere^{d-1} \cap\{v\}^\perp)}
        \hspace{-1.3cm}\mathbbm{1}_{(U_u(u,v,w), U_v(u,v,x)) \not \in B}
        \ K_{(w^k,x^k)\mid (u^k,v^k)}((u,v), \ \dd w\times\dd x) \\
        &\leq 
        \hspace{-1.3cm}\iint\limits_{(\sphere^{m-1} \cap \{u\}^\perp)\times(\sphere^{d-1} \cap\{v\}^\perp)}
        \hspace{-1.3cm}\mathbbm{1}_{(U_u(u,v,w), U_v(u,v,x)) \not \in C_{\theta,\theta'}}
        \ K_{w\mid u}(u, \ \dd w) \ K_{x\mid v}(v, \ \dd x) \\
        &\leq 
        \hspace{-0.6cm}\int\limits_{\sphere^{m-1} \cap \{u\}^\perp}
        \hspace{-0.6cm}\mathbbm{1}_{U_u(u,v,w) \not \in C_{\theta'}}
        \ K_{w\mid u}(u, \ \dd w) \
        \cdot 
        \hspace{-0.6cm}\int\limits_{\sphere^{d-1} \cap\{v\}^\perp}
        \hspace{-0.6cm}\mathbbm{1}_{U_v(u,v,x) \not \in C_{\theta}}
        \ K_{x\mid v}(v, \ \dd x) \\
        &= 
        K_{w\mid u}(u, U_u^{-1}(u,v,C_{\theta'}^{c}))
        \cdot 
        K_{x\mid v}(v, U_v^{-1}(u,v,C_{\theta}^{c})).
    \end{align*}}
    Using Proposition~\ref{prop:inf_probability} 
    separately for $w \in \sphere^{m-1} \cap \{u\}^\perp$ 
    and $x \in \sphere^{d-1} \cap \{v\}^\perp$ yields
    \begin{align*}
        \PP(\Omega_{k+1})
        & \leq 
        \iint\limits_{\sphere^{m-1}\times\sphere^{d-1} \setminus B}
        (1 - p_{\adj{\theta'},m})(1 - p_{\adj{\theta},d})
        \ \mathrm d \PP_{(u^k, v^k)}(u, v) \\
        & = (1 - p_{\adj{\theta'},m})(1 - p_{\adj{\theta},d}) \PP(\Omega_k),
    \end{align*}
    \adj{using the law of total probability, again}, which proves \eqref{eq:bound_p}.
\end{proof}

\adj{\begin{remark}[Nonzero objective value and detecting null maps]
    \label{rem:OpNorm2_detecting_null_maps}
    If we take two independent samples $u^0\in \sphere^{m-1}$ and $v^0\in\sphere^{d-1}$ according to the respective uniform distribution and observe that
    \begin{align}
        \label{eqn:OpNorm2_objective_is_zero}
        \langle u^0, Av^0\rangle - \langle V^*u^0, v^0\rangle = 0,
    \end{align}
    then we can conclude that $A=V$ holds almost surely. Namely, we have equivalence of the following three assertions:
    \begin{enumerate}[i)]
        \item $A=V$, 
        \item $\PP[\langle u^0, (A-V)v^0\rangle = 0] = 1$, 
        \item $\PP[\langle u^0, (A-V)v^0\rangle = 0] >0$.
    \end{enumerate}
    Here, 
    the only interesting implication to prove is (iii) $\Rightarrow$ (i),
    which we do by proving that if $A\neq V$, 
    then $\langle u^0, (A-V)v^0\rangle = 0$ holds with probability zero: 
    For all fixed $u$ that are not contained in the null space of $(A-V)^*$ 
    we have $\langle u, (A-V)v^0\rangle = 0$ with probability zero, 
    and hence \eqref{eqn:OpNorm2_objective_is_zero} holds with probability zero by integrating over all $u$ 
    according to the uniform distribution on $\sphere^{m-1}$.
\end{remark}}

\begin{remark}[Neglecting the Assumption~\ref{ass:assump_2}] 
    \adj{\label{rem:drop_assumptions_in_algo_1}
    By Remark \ref{rem:OpNorm2_detecting_null_maps},
    we can drop the case $\norm{A-V}=0$ in Assumption~\ref{ass:assump_2}.
    If $\norm{A-V}\neq0$,
    we want to employ Lemma~\ref{lem:update_step_as} to drop Assumption~\ref{ass:assump_2}.
    Therefore,
    we inductively show that $(u^k, v^k)$ is almost surely not a pair of singular vectors 
    to a common singular value of $A - V$,
    if $\norm{A - V} \neq 0$.
    We know from Proposition~\ref{prop:optimal_pair}
    that the singular vector pairs $(u,v)$ 
    are characterized by $v$
    in the sense that the left singular vector $u$ 
    is given by $u = (A - V)v /\norm{(A - V)v}$.
    Starting in $(u^0, v^0)$,
    by Remark~\ref{rem:OpNorm2_detecting_null_maps} 
    we have $u^0 \not\in \ker((A - V)^*)$, 
    and $v^0 \not\in \ker(A - V)$.

    If the search directions $(w^{0}, x^{0})$ 
    were chosen such that $u^{1}$ is a left singular vector 
    and $v^{1}$ a right singular vector to a common singular value, 
    then we necessarily have 
    $(u^1, v^1) = \left(\tfrac{(A - V)v^1}{\norm{(A - V)v^1}}, v^1 \right)$.
    We also have $v^1 \not \in \ker(A - V)$,
    due to monotonicity, see Corollary~\ref{cor:monotonicity_abs_ak_in_OpNorm3}.
    Hence,
    for fixed $v^1$ this happens with probability zero, 
    since $w^0$ is uniformly in $\sphere^{m-2}$.
    Integrating over all possible $v^1$, 
    Fubini's Theorem shows that the whole event
    that $(u^1,v^1)$ is a pair of left and right singular vectors 
    to a common singular value
    has probability zero. 
    By induction, 
    we conclude that, almost surely, 
    for all $k \in \N$ the iterate $(u^k,v^k)$ 
    is not a pair of singular vectors to a common singular value
    and by Lemma~\ref{lem:update_step_as} we conclude that $a_k\neq 0$. 
    Now,
    all assertions above follow, 
    including Theorem~\ref{thm:convergence_B}.}
\end{remark}

\begin{remark}[Stopping criteria]
    \label{rem:stopping_criteria_alg_2}
    For a stopping criterion we use Corollary~\ref{cor:c_k_to_0}
    and Corollary~\ref{cor:ab+cd_and_b_to_0}
    to conclude that $|b_k| + |c_k| \to 0$ for $k \to \infty$ almost surely.
    Since $a_k, d_k$ are bounded,
    we also have $a_kb_k + c_kd_k \to 0$ for $k \to \infty$ (a.s.)
    and since Proposition~\ref{prop:factos_alomst_surely_not_zero} 
    claims that $a_kb_k + c_kd_k = 0$ happens only at pairs 
    of left and right singular vectors,
    we propose to use $|b_k| + |c_k| < \varepsilon$ as appropriate stopping criteria.
\end{remark}

\subsection{Convergence rates}
\label{sec:convergence_analysis}

In this subsection,
we aim to analyze the convergence rate of Algorithm~\ref{alg:OpNorm3}.

\begin{lemma}
    \label{lem:norm_proj_relation}
    Let $v \in \sphere^{d-1}$.
    For any $x \in \sphere^{d-1}$ holds 
    \begin{align*}
        \norm{(I_d - vv^*)x}^2 & = \norm{x - \scp{v}{x}v}^{2} = \norm{x - v}^2\Bigl(1 - \bigl(\tfrac{1}{2}\norm{x - v}\bigr)^2\Bigr)\\
        & = \tfrac{1}{4} \norm{x - v}^2 \norm{x + v}^2
        = 1 - \scp{x}{v}^2.
    \end{align*}
\end{lemma}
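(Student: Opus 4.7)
The plan is to verify the four equalities in sequence, each by a short direct computation using only $\norm{x}=\norm{v}=1$ and bilinearity of the inner product; there is no real obstacle here, the result is essentially a bookkeeping exercise for the Pythagorean/polarization identities on the sphere.

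First I would handle the identity between the leftmost expression and the middle one $1 - \scp{x}{v}^2$. Since $vv^*$ is the rank-one operator $y \mapsto \scp{v}{y}v$, we have $(I_d - vv^*)x = x - \scp{v}{x}v$, which gives the first equality. Expanding the squared norm and using $\norm{v}=1$ yields
\begin{align*}
\norm{x - \scp{v}{x}v}^2 = \norm{x}^2 - 2\scp{v}{x}^2 + \scp{v}{x}^2 \norm{v}^2 = 1 - \scp{v}{x}^2,
\end{align*}
so the final equality of the chain is established at once.

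Next I would show $\tfrac{1}{4}\norm{x-v}^2\norm{x+v}^2 = 1 - \scp{x}{v}^2$. Since $\norm{x}=\norm{v}=1$, the polarization identities give
\begin{align*}
\norm{x-v}^2 = 2(1-\scp{x}{v}), \qquad \norm{x+v}^2 = 2(1+\scp{x}{v}),
\end{align*}
and multiplying these produces $4(1-\scp{x}{v}^2)$, as required.

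Finally, for the middle equality I would substitute $\norm{x-v}^2 = 2(1-\scp{x}{v})$ into $\norm{x-v}^2\bigl(1 - \tfrac{1}{4}\norm{x-v}^2\bigr)$ to obtain
\begin{align*}
2(1-\scp{x}{v})\bigl(1 - \tfrac{1}{2}(1-\scp{x}{v})\bigr) = (1-\scp{x}{v})(1+\scp{x}{v}) = 1-\scp{x}{v}^2,
\end{align*}
which closes the chain. All four quantities therefore equal $1-\scp{x}{v}^2$, establishing the lemma.
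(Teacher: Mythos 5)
Your proof is correct and fills in exactly the routine computations that the paper leaves to the reader with the remark that ``all identities follow from straightforward manipulations.'' The approach is the same as the paper intends, so no further comment is needed.
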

\begin{proof}
    All identities follow from straightforward manipulations.
\end{proof}

The next result shows that the angle between $u^{k}$ and $(A-V)v^{k}$ goes to zero at a sublinear rate.
\adj{For this, we chose under Assumption~\ref{ass:assump_2}
for some $1 > \delta \gg 0$ a constant $\kappa_0 > 0$ 
such that 
\begin{equation}
    \label{eq:kappa_0_delta}
    \PP(\abs{\scp{u^0}{(A - V) v^0}} > \kappa_0) > \delta.
\end{equation}
This is just an assumption on $A - V$,
i.e. on the location of the singular values.}

\begin{proposition}
    \label{prop:conv_rate_angle}
    \adj{Let $(u^k, v^k)$ the pair of random variables 
    generated by Algorithm~\ref{alg:OpNorm3}.
    Then, for any $\varepsilon > 0$ and any $n \in \NN$ holds 
    \begin{align*}
        \left.
        \begin{array}{c}
            \min_{0 \leq k \leq n}
            \PP(\norm{(I_m - u^k (u^k)^*)(A-V) v^k}^2 > \varepsilon) \\
            \min_{0 \leq k \leq n}
            \PP(\norm{(I_m - v^k (v^k)^*)(A-V)^{*} u^k}^2 > \varepsilon)
        \end{array}
        \right\}
        %\in \bigO((n \varepsilon)^{-1}),
        \leq 2\tfrac{\max\{m-1, d-1\}\norm{A - V}^2}{(n+1)\varepsilon},
    \end{align*}
    additionally, 
    for any $\varepsilon > 0$ under the choice from \eqref{eq:kappa_0_delta} 
    holds
    \[
        \min_{0 \leq k \leq n}
        \PP\Bigl( 1 - \scp{u^k}{\tfrac{(A-V) v^k}{(A-V) v^k}}^2
         + 1 - \scp{v^k}{\tfrac{(A-V)^* u^k}{(A-V)^* u^k}}^2 > \tfrac{\varepsilon}{\kappa_0} 
         \Bigl|\Bigr. \abs{a_0} > \kappa_0 \Bigr)
        \in \bigO(1/n)
    \]
    for and $\varepsilon > 0$ and $n \to \infty$.}
\end{proposition}
\begin{proof}
    See Appendix~\ref{proof:conv_rate_angle}.
\end{proof}

We can turn the above proposition 
into a convergence rate for the error in the eigenvector equation for $(A - V)^*(A - V)$.

\begin{theorem}
    \label{thm:conv_rate}
    \adj{Let $(u^k, v^k)$ the pair of random variables 
    generated by Algorithm~\ref{alg:OpNorm3}.
    Then, for any $\varepsilon > 0$ under the choice from \eqref{eq:kappa_0_delta} 
    holds for all $\varepsilon>0$
    \begin{align*}
         \min_{0 \leq k\leq n}
         \PP\Bigl(\norm{(A - V)^*(A - V) v^k - \lambda_k v^k}^2 > \tfrac{\varepsilon}{\kappa_0} 
         \Bigl|\Bigr. \abs{a_0} > \kappa_0\Bigr)
         \in \bigO(1/n)
         \quad \textnormal{as}\ n\to\infty,
         % \leq 2\sqrt{2} \norm{A - V}^2 \sqrt{1 - \sqrt{1 - \tfrac{c}{n}}}
    \end{align*}
    where $\lambda_k = \norm{(A - V)^* u^k} \cdot \norm{(A - V) v^k}$.}
\end{theorem}
\begin{proof}
    We use Lemma~\ref{lem:norm_proj_relation} to rewrite  
    \begin{align}
        \label{eq:diff_inner_prod_square}
        1 - \scp{v^k}{\tfrac{(A - V)^* u^k}{\norm{(A - V)^* u^k}}}^2 = \tfrac14\norm[\Big]{u^{k} - \tfrac{(A-V)v^{k}}{\norm{(A-V)v^{k}}}}^{2}\norm[\Big]{u^{k} + \tfrac{(A-V)v^{k}}{\norm{(A-V)v^{k}}}}^{2}.
    \end{align}
    By Theorem~\ref{thm:convergence_B}
    we already know that $\norm[\big]{u^{k} - \tfrac{(A-V)v^{k}}{\norm{(A-V)v^{k}}}}\to 0$ for $k \to \infty$, almost surely,
    which implies that $\norm[\big]{u^{k} + \tfrac{(A-V)v^{k}}{\norm{(A-V)v^{k}}}}\to 2$ almost surely.
    Similarly,
    we get that $\norm[\big]{v^{k} - \tfrac{(A-V)^{*}u^{k}}{\norm{(A-V)^{*}u^{k}}}}\to 0$ for $k \to \infty$ almost surely,
    which implies that $\norm[\big]{v^{k} + \tfrac{(A-V)^{*}u^{k}}{\norm{(A-V)^{*}u^{k}}}}\to 2$ almost surely.
    \adj{By construction with $a_k = \scp{u^k}{(A - V) v^k} \geq a_0 \geq 0$,
    we obtain the crucial bound 
    \[
        4 \geq \norm[\big]{u^{k} + \tfrac{(A-V)v^{k}}{\norm{(A-V)v^{k}}}}^2
        = 2\Bigl(1 + \scp{u^k}{\tfrac{(A-V)v^{k}}{\norm{(A-V)v^{k}}}}\Bigr)
        \geq 2\bigl(1 + \tfrac{a_0}{\norm{(A-V)v^{k}}}\bigr) 
        \geq 2
    \]
    and similar $2 \geq \norm[\big]{v^{k} + \tfrac{(A-V)^{*}u^{k}}{\norm{(A-V)^{*}u^{k}}}} \geq \sqrt{2}$.}
    Together with Proposition~\ref{prop:conv_rate_angle} this leads to 
    \begin{align}
        \label{eq:conv-rate-differences}
        \min_{0\leq k\leq n} 
        \PP\scalebox{0.9}{$\left(\norm[\Big]{u^{k} - \tfrac{(A-V)v^{k}}{\norm{(A-V)v^{k}}}}^{2} 
        \!\!\!+ \norm[\Big]{v^{k} - \tfrac{(A-V)^{*}u^{k}}{\norm{(A-V)^{*}u^{k}}}}^{2} \!\!\!> \tfrac{\varepsilon}{\kappa_0}
        \Bigl|\Bigr. \abs{a_0} > \kappa_0\right)$}
        \in \bigO(1/n)\quad\textnormal{for} n\to \infty.
    \end{align}
    Incorporating $\lambda_{k}$ as defined above,
    it holds that 
    \begin{align}
        \label{eq:inequality_to_prop}
        & \norm{(A-V)^{*}(A-V)v^{k} - \lambda_{k}v^{k}}^{2} \notag \\
        & \leq 2\Big(\norm[\big]{(A-V)^{*}(A-V)v^{k} - \norm{(A-V)v^{k}}(A-V)^{*}u^{k}}^{2} \notag \\
        & \qquad + \norm[\big]{\norm{(A-V)v^{k}}(A-V)^{*}u^{k} - \lambda_{k}v^{k}}^{2}\Big) \notag \\
        & \leq 2\Big(\norm{A-V}^{2}\norm{(A-V)v^{k}}^{2}\norm[\Big]{\tfrac{(A-V)v^{k}}{\norm{(A-V)v^{k}}} - u^{k}}^{2} \notag \\
        & \qquad  + \norm{(A-V)v^{k}}^{2}\norm{(A-V)^{*}u^{k}}^{2} \norm[\Big]{\tfrac{(A-V)^{*}u^{k}}{\norm{(A-V)^{*}u^{k}}} - v^{k}}^2\Big) \notag \\
        & \leq 2\norm{A-V}^{4} \left[\norm[\Big]{\tfrac{(A-V)v^{k}}{\norm{(A-V)v^{k}}} - u^{k}}^{2} + \norm[\Big]{\tfrac{(A-V)^{*}u^{k}}{\norm{(A-V)^{*}u^{k}}} - v^{k}}^{2} \right].
    \end{align}
    Utilizing~\eqref{eq:conv-rate-differences} in \eqref{eq:inequality_to_prop}
    shows the claim.
\end{proof}

The quantity $\lambda_{k}$ in the previous theorem is not computable in practice.
However,
the next results shows the same convergence rate
for an eigenvector equation,
where the eigenvalue is approximated by $a_k^2$ .

\begin{corollary}
    \label{cor:conv_rate_a_k}
    \adj{Let $(u^k, v^k)$ the pair of random variables 
    and $a_k$ generated by Algorithm~\ref{alg:OpNorm3}.
    Then, for any $\varepsilon > 0$ under the choice from \eqref{eq:kappa_0_delta} 
    holds for all $\varepsilon>0$
    \begin{align*}
        \min_{0 \leq k \leq n}
        \PP\bigl( \norm{(A - V)^*(A - V) v^k - a_k^2 v^k}^2 > \tfrac{\varepsilon}{\kappa_0}
        \;\bigl|\bigr.\; \abs{a_0} > \kappa_0\bigr)
        \in \bigO(1/n)
        \quad \textnormal{for}\ n\to\infty.
    \end{align*}}
  \end{corollary}
\begin{proof}
See Appendix~\ref{proof:conv_rate_a_k}.
\end{proof}

Since we have $\scp{u^k}{A v^k} \leq \norm{A v^k}$
which is the approximation of the maximal singular value in the $k$th iteration of \cite[Alg.~1]{BLSW24},
we also obtain the convergence rate of $\bigO(n^{-1/2})$ in \cite{BLSW24}, 
i.e. we have for all $\varepsilon>0$
\[
  \min_{0 \leq k \leq n}
  \adj{\PP(\norm{A^*A v^k - \norm{A v^k}^2 v^k} > \varepsilon)
  \in \bigO(n^{-1/2}),\quad\textnormal{for}\ n\to \infty,}
\]
due to the similar architectures of the methods
and $a_0 = \norm{A v^0}^2 > 0$ almost surely,
if $v^0 \in \sphere^{d-1}$ is uniformly sampled.

\section{Examples and numerical experiments}
\label{sec:numerical-experiments}

Before we state numerical experiments we give an illustrative example:
Consider the matrices    
\begin{align*}
    A = \left[\begin{smallmatrix}
        1 & 0 \\ 0 & 0
    \end{smallmatrix}\right]
\qquad \text{and}\qquad A = 
    \left[\begin{smallmatrix}
        1 & 0 \\ 0 & 1 \\ 0 & 0
    \end{smallmatrix}\right]
\end{align*}
and $V$ is set to be the zero operator of respective size.
% \adj{Furthermore, we design an other algorithm 
% which uses one global optimal step size. 
% It is summarized in Algorithm~\ref{alg:OpNorm2}. 
% We believe that similar proofs can be found showing 
% the almost sure convergence of the latter approach.}
For the first matrix,
Algorithm~\ref{alg:OpNorm3} arrives at the optimum in exactly one iteration.
Notably,
\cite[Alg.~1]{BLSW24} also converges in one iteration,
since $A^{*}A = A$.
For the second matrix,
Algorithm~\ref{alg:OpNorm3} converges in one iteration, again,
and \cite[Alg.~1]{BLSW24} would detect an orthogonal operator,
since $A^*A = I_2$.
Differently, in the case we would consider $A^* \in \RR^{2\times3}$ instead of $A$,
only the behavior of \cite[Alg.~1]{BLSW24} would change,
since we now have $AA^* = \textup{diag}(1,1,0)$.

\subsection{Performance analysis of the proposed algorithm}
    
For a first numerical experiment,
we \adj{study} the convergence speed of Algo\-rithm~\ref{alg:OpNorm3}.
To this end,
we apply Algo\-rithm~\ref{alg:OpNorm3} to 50 randomly generated Gaussian matrices 
of size $m \times d$ for different values $m$ and $d$
and plot the relative error, i.e. 
\begin{equation*}
    \adj{\tfrac{\norm{A - V} - a_k}{\norm{A - V}}
    = \tfrac{\norm{A - V} - \scp{u^k}{(A - V) v^k}}{\norm{A - V}}
    = \tfrac{\norm{A - V} - \scp{u^k}{A v^k} + \scp{V^* u^k}{v^k}}{\norm{A - V}}}
\end{equation*}
for all iterations, see Figure~\ref{fig:experiment1}.
For the higher dimensional cases, 
we observe that dimension of the output space has a siginificatn impact 
on the numbers of necessary iterations for an appropriate approximation result
for the adjoint mismatch.%
\footnote{The code for this and the other experiments can be found at \url{https://github.com/JJEWBresch/MatrixNormWithoutAdjoint}.}

\begin{figure}[t]
  \resizebox{\textwidth}{!}{
  \begin{threeparttable}
  \begin{tabular}{c c c c}
    & $10\times50$ & $50\times50$ & $100\times50$ \\
    \rotatebox{90}{\hspace{0.2cm}$\adj{\abs{b_k} + \abs{c_k}}$\tnote{I} \hspace{0.55cm} $\tfrac{\norm{A - V} - \abs{\scp{u^k}{(A - V) v^k}}}{\norm{A - V}}$}&
    \includegraphics[width=0.38\textwidth, clip=true, trim=10pt 40pt 30pt 60pt]{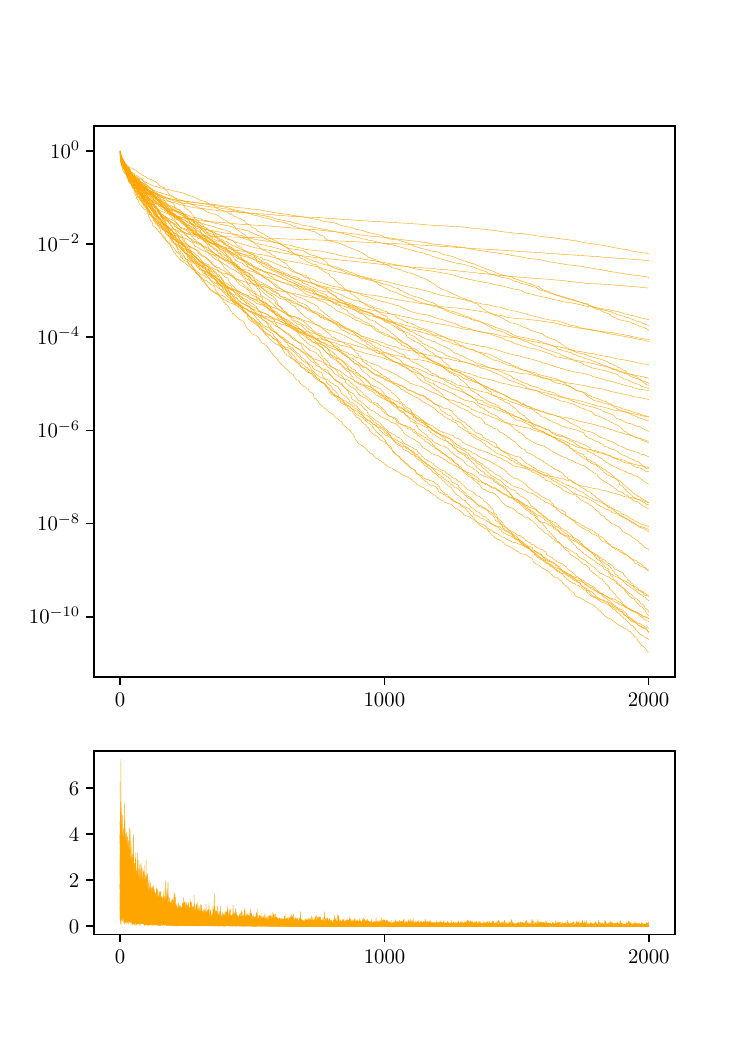}&
    \includegraphics[width=0.38\textwidth, clip=true, trim=10pt 40pt 30pt 60pt]{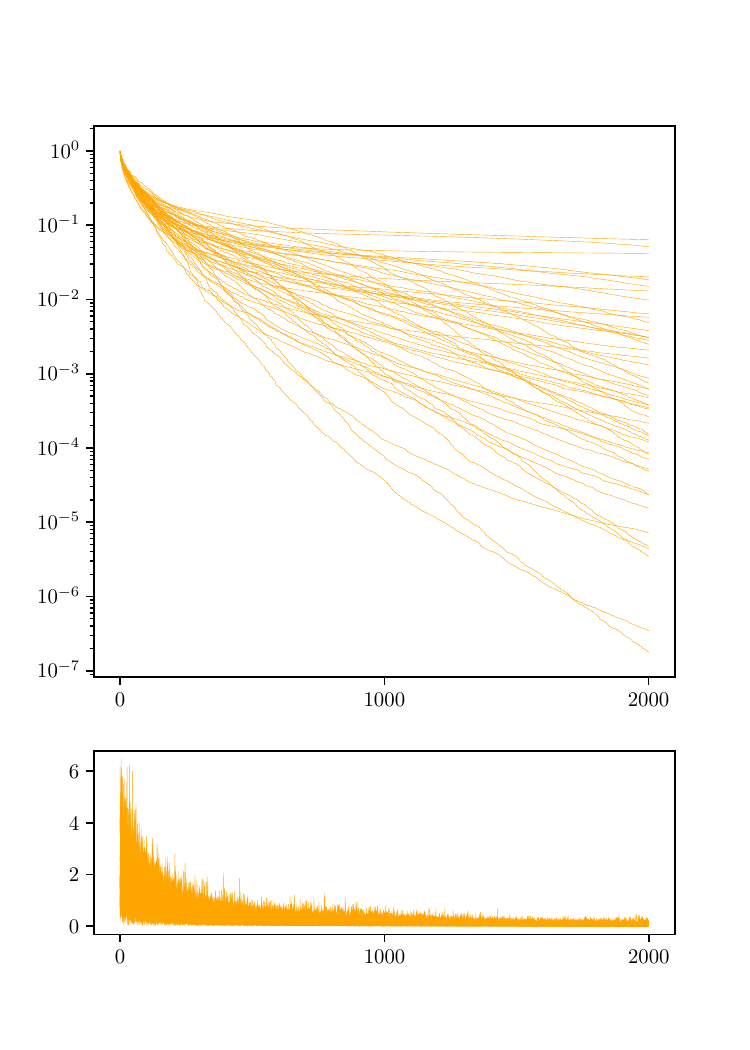}&
    \includegraphics[width=0.38\textwidth, clip=true, trim=10pt 40pt 30pt 60pt]{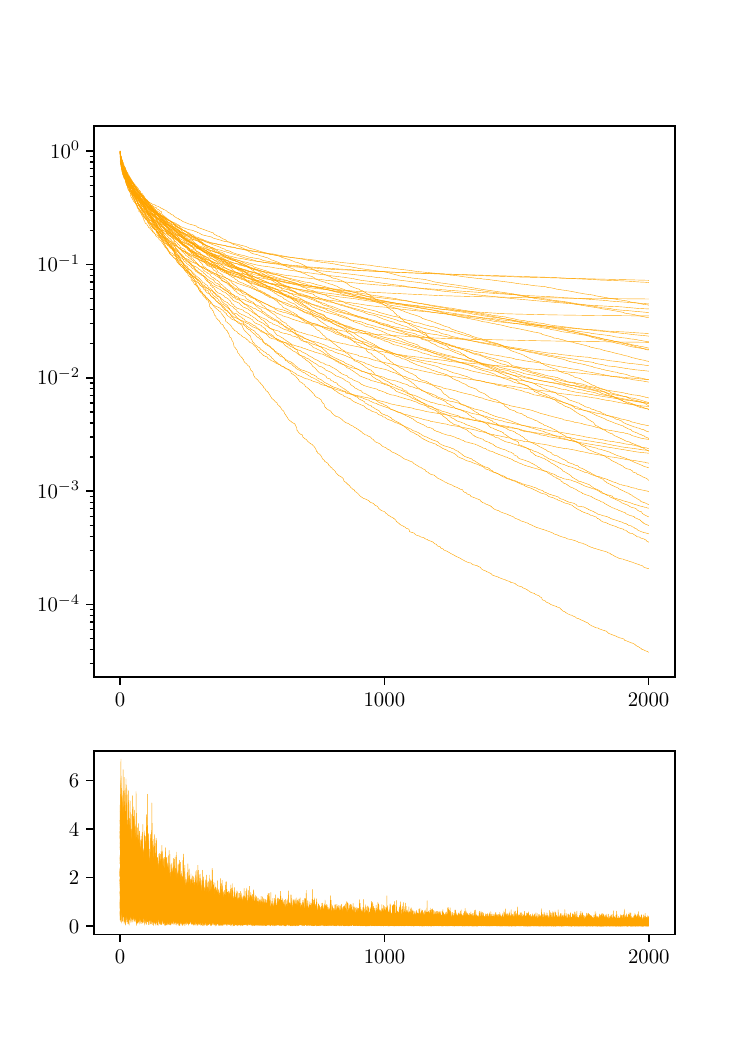}\\
    & $100\times500$ & $500\times500$ & $1000\times500$ \\
    \rotatebox{90}{\hspace{0.2cm}$\adj{\abs{b_k} + \abs{c_k}}$\tnote{I} \hspace{0.55cm} $\tfrac{\norm{A - V} - \abs{\scp{u^k}{(A - V) v^k}}}{\norm{A - V}}$}&
    \includegraphics[width=0.38\textwidth, clip=true, trim=10pt 40pt 30pt 60pt]{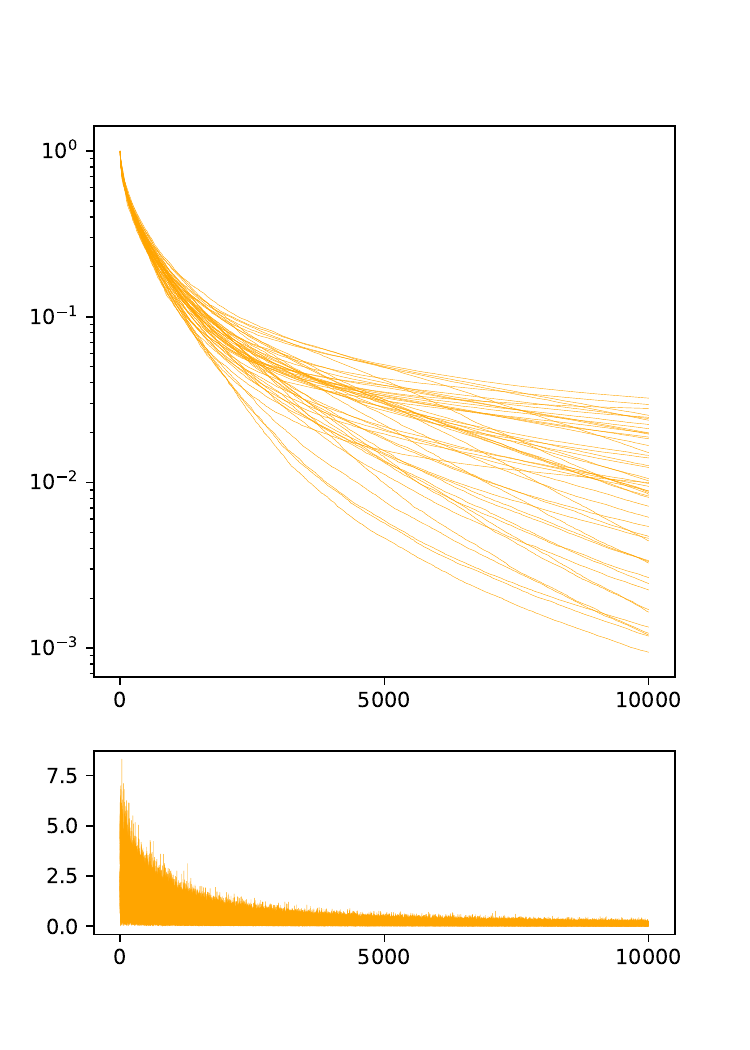}&
    \includegraphics[width=0.38\textwidth, clip=true, trim=10pt 40pt 30pt 60pt]{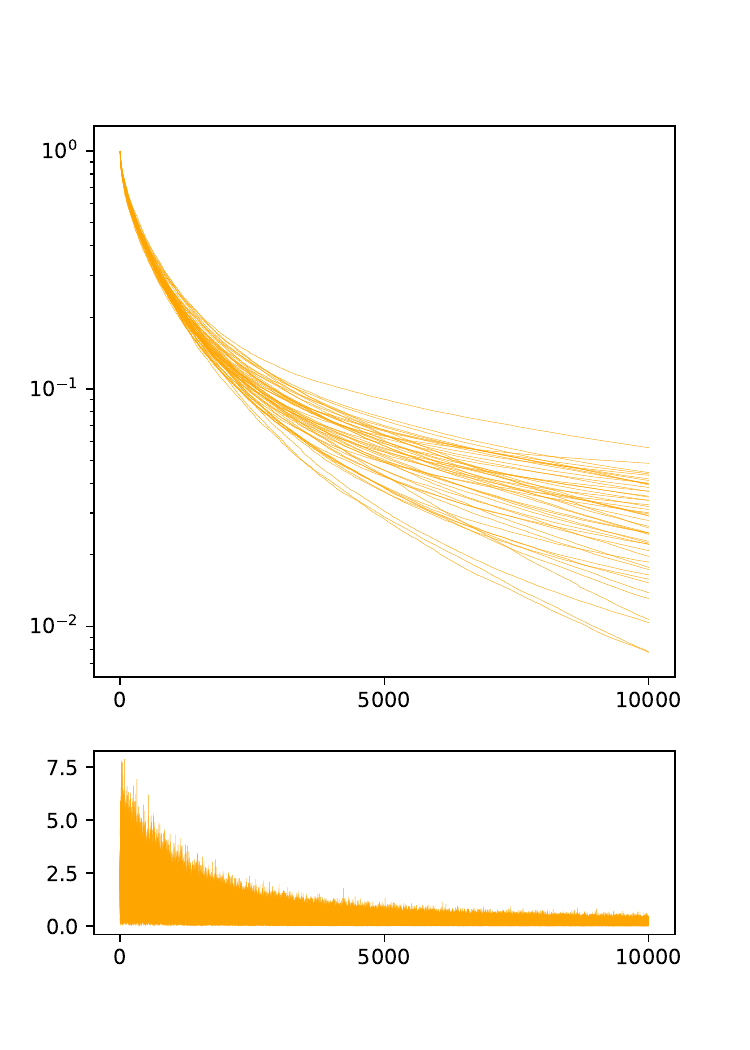}&
    \includegraphics[width=0.38\textwidth, clip=true, trim=10pt 40pt 30pt 60pt]{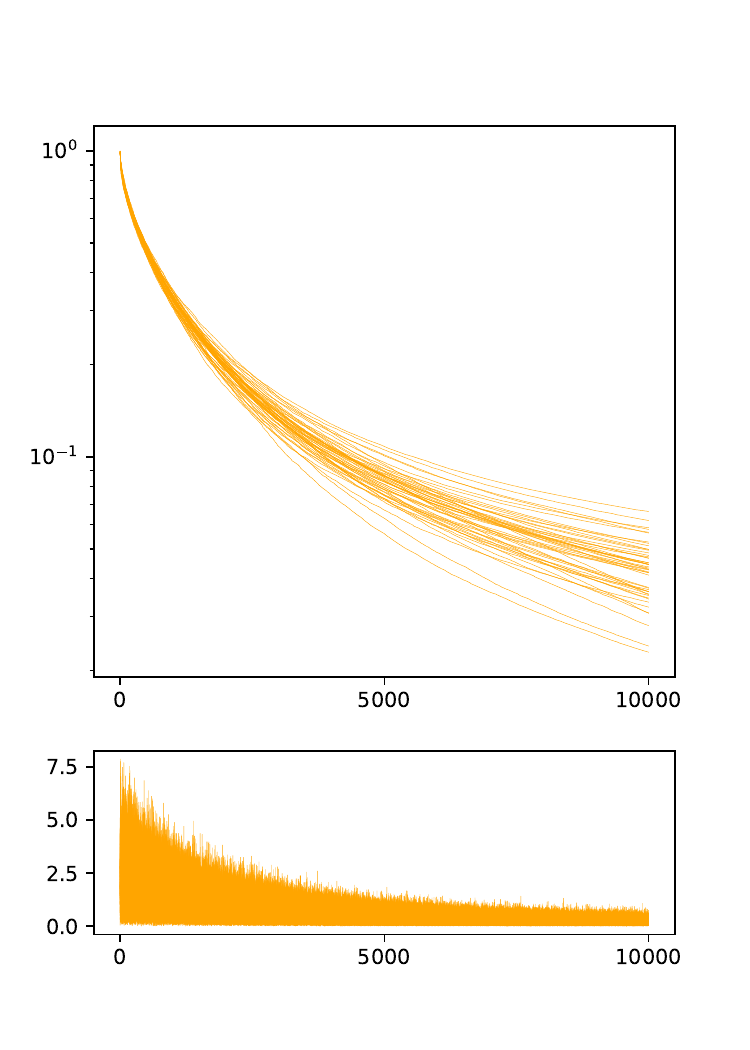}
  \end{tabular}
  \begin{tablenotes}
    \item[I] The absolute value of the sum of two of the four parameters \adj{from \eqref{eq:abcd_parameter}} generated from Algorithm~\ref{alg:OpNorm3}.
  \end{tablenotes}
  \end{threeparttable}
  }
  \caption{Results for 50 runs of Algorithm~\ref{alg:OpNorm3}
  for Gaussian matrices of different sizes $m\times d$.}
  \label{fig:experiment1}
\end{figure}

\subsection{Comparison with \cite{BLSW24} for computing $\norm{A}$}

As a second experiment,
we consider the algorithm from \cite[Alg.~1]{BLSW24} which has been proposed for computation of the norm $\|A\|$ without using the adjoint of $A$,
in comparison with Algorithm~\ref{alg:OpNorm3}.
Therefore,
we run both algorithms on 50 randomly generated Gaussian matrices 
$A \in \RR^{m\times d}$, 
and set $V^* = 0 \in \RR^{d \times m}$ for Algorithm~\ref{alg:OpNorm3}.
The relative errors are displayed in Figure~\ref{fig:experiment2}.
Additionally,
we give the value of $a_k$,
generated by \adj{\cite[Alg.~1]{BLSW24}},
and the absolute value of $b_k$ plus $c_k$,
generated by Algorithm~\ref{alg:OpNorm3}.
Remarkably, the convergence rate of \cite[Alg.~1]{BLSW24}
is faster than the one of Algorithm~\ref{alg:OpNorm3}.
We observe that the superiority of \cite[Alg.~1]{BLSW24} 
is more visible with increasing dimension of the output space.

\begin{figure*}[htp]
  \resizebox{\textwidth}{!}{
  \begin{threeparttable}
  \begin{tabular}{c c c c}
    & $10\times50$ & $50\times50$ & $100\times50$ \\
    \rotatebox{90}{\hspace{0.2cm}$\adj{\abs{b_k} + \abs{c_k}}$\tnote{II} \hspace{0.8cm} $a_k$\tnote{I} \hspace{1.25cm} $\tfrac{\norm{A - V} - \abs{\scp{u^k}{(A - V) v^k}}}{\norm{A - V}}$}&
    \includegraphics[width=0.38\textwidth, clip=true, trim=10pt 60pt 30pt 80pt]{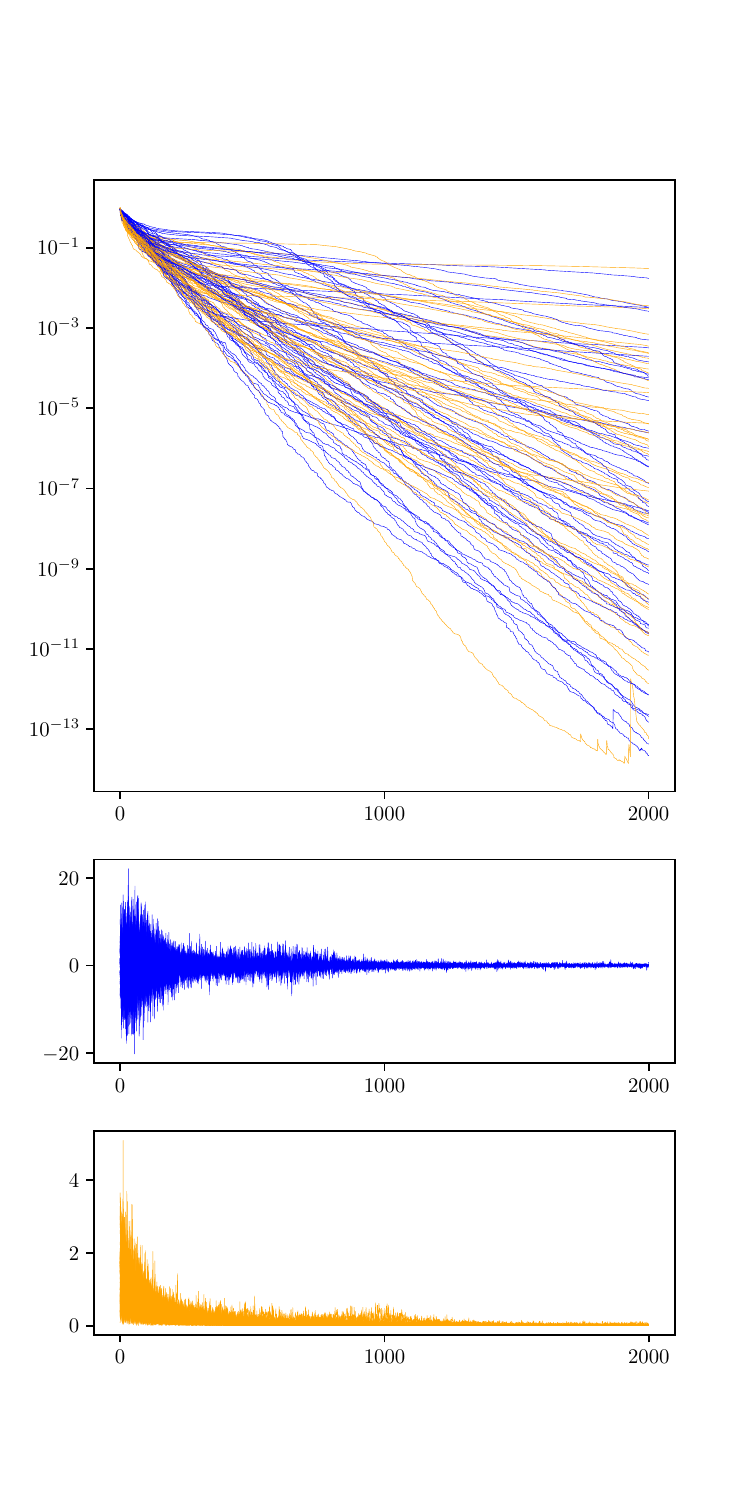}&
    \includegraphics[width=0.38\textwidth, clip=true, trim=10pt 60pt 30pt 80pt]{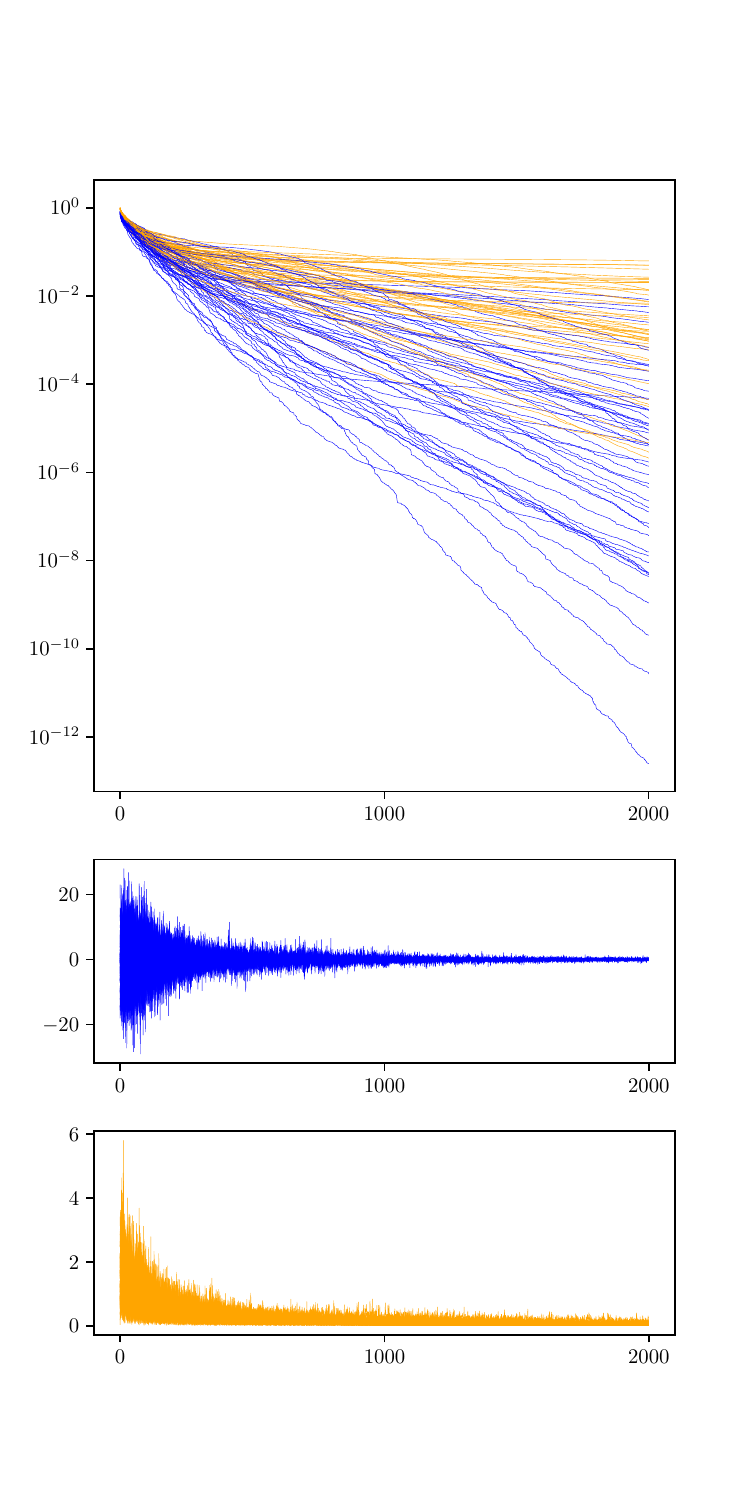}&
    \includegraphics[width=0.38\textwidth, clip=true, trim=10pt 60pt 30pt 80pt]{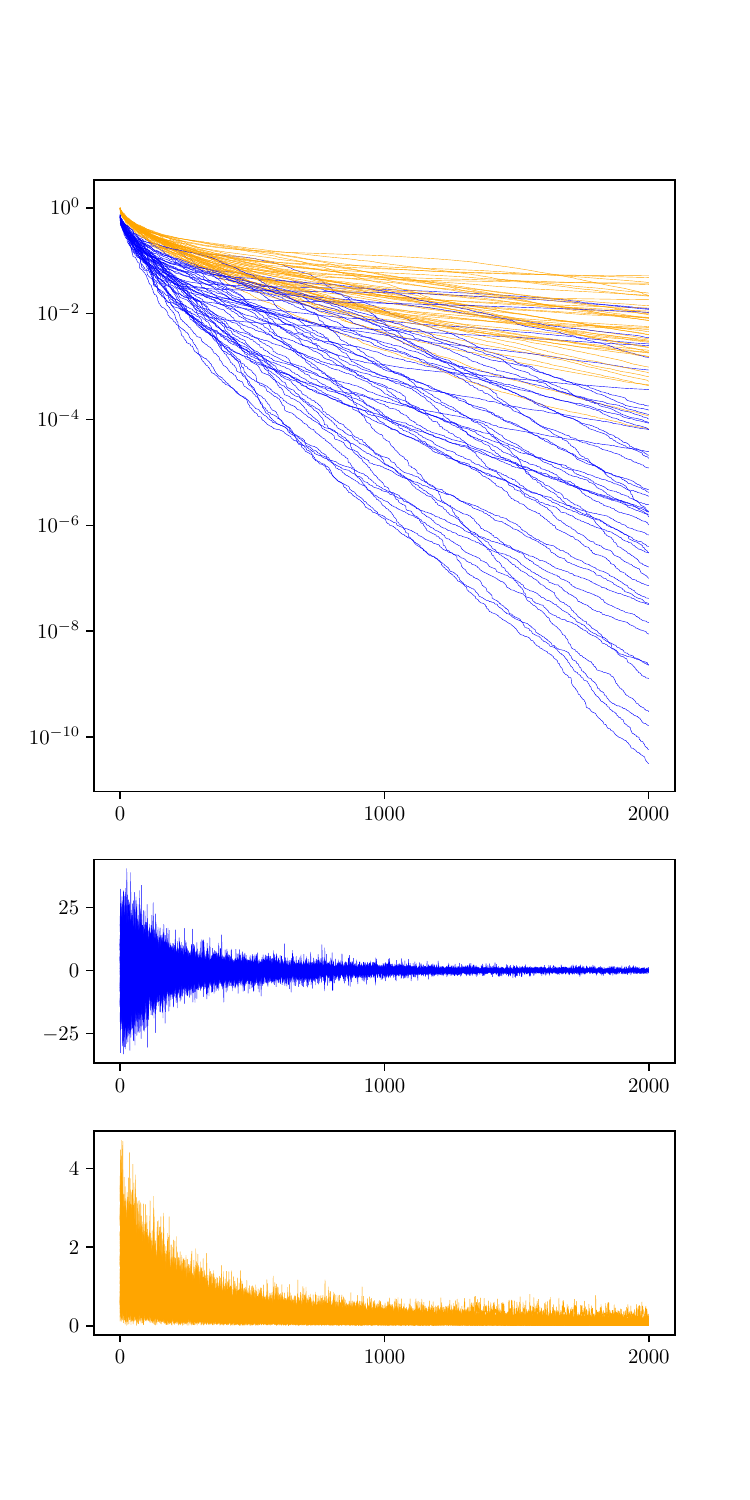}\\
    & $100\times500$ & $500\times500$ & $1000\times500$ \\
    \rotatebox{90}{\hspace{0.2cm}$\adj{\abs{b_k} + \abs{c_k}}$\tnote{II} \hspace{0.8cm} $a_k$\tnote{I} \hspace{1.25cm} $\tfrac{\norm{A - V} - \abs{\scp{u^k}{(A - V) v^k}}}{\norm{A - V}}$}&
    \includegraphics[width=0.38\textwidth, clip=true, trim=10pt 60pt 30pt 80pt]{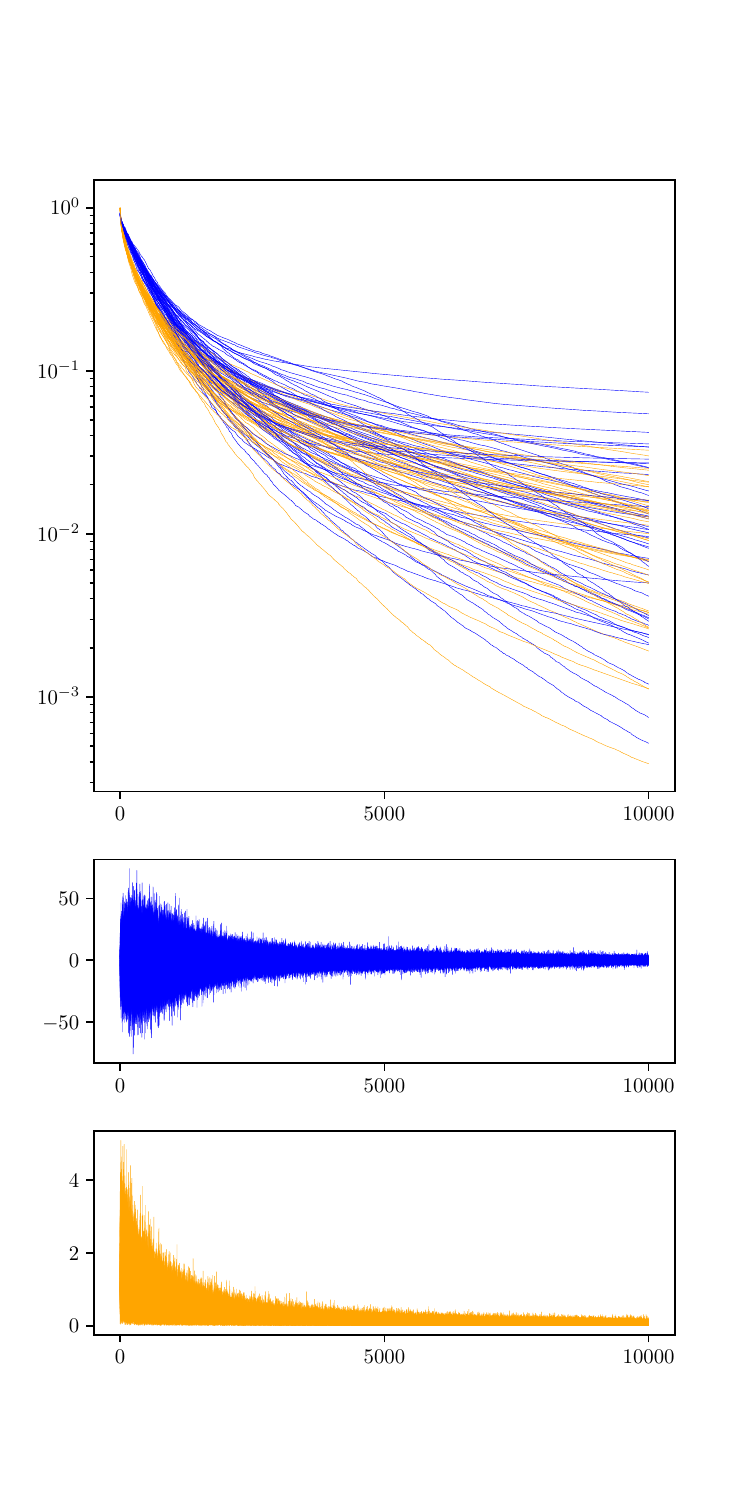}&
    \includegraphics[width=0.38\textwidth, clip=true, trim=10pt 60pt 30pt 80pt]{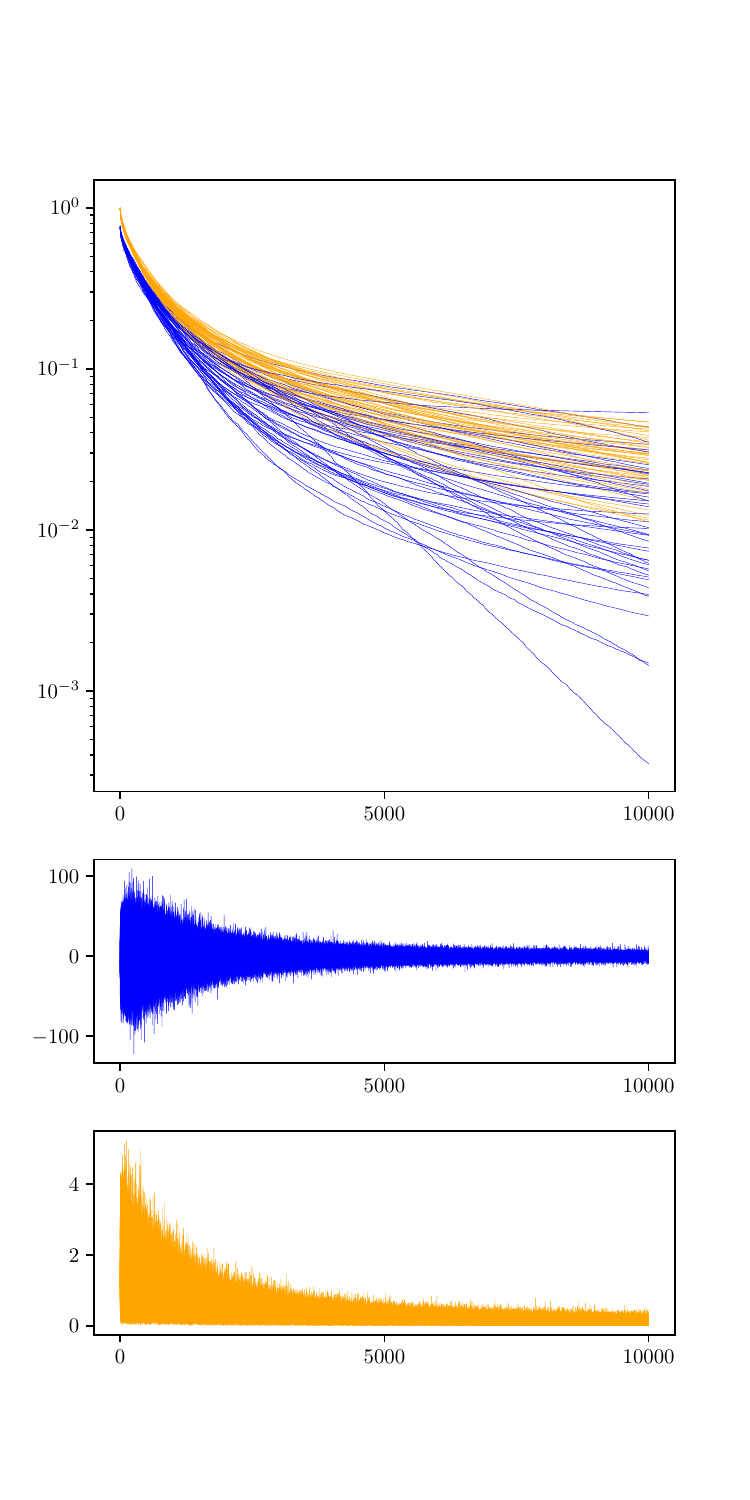}&
    \includegraphics[width=0.38\textwidth, clip=true, trim=10pt 60pt 30pt 80pt]{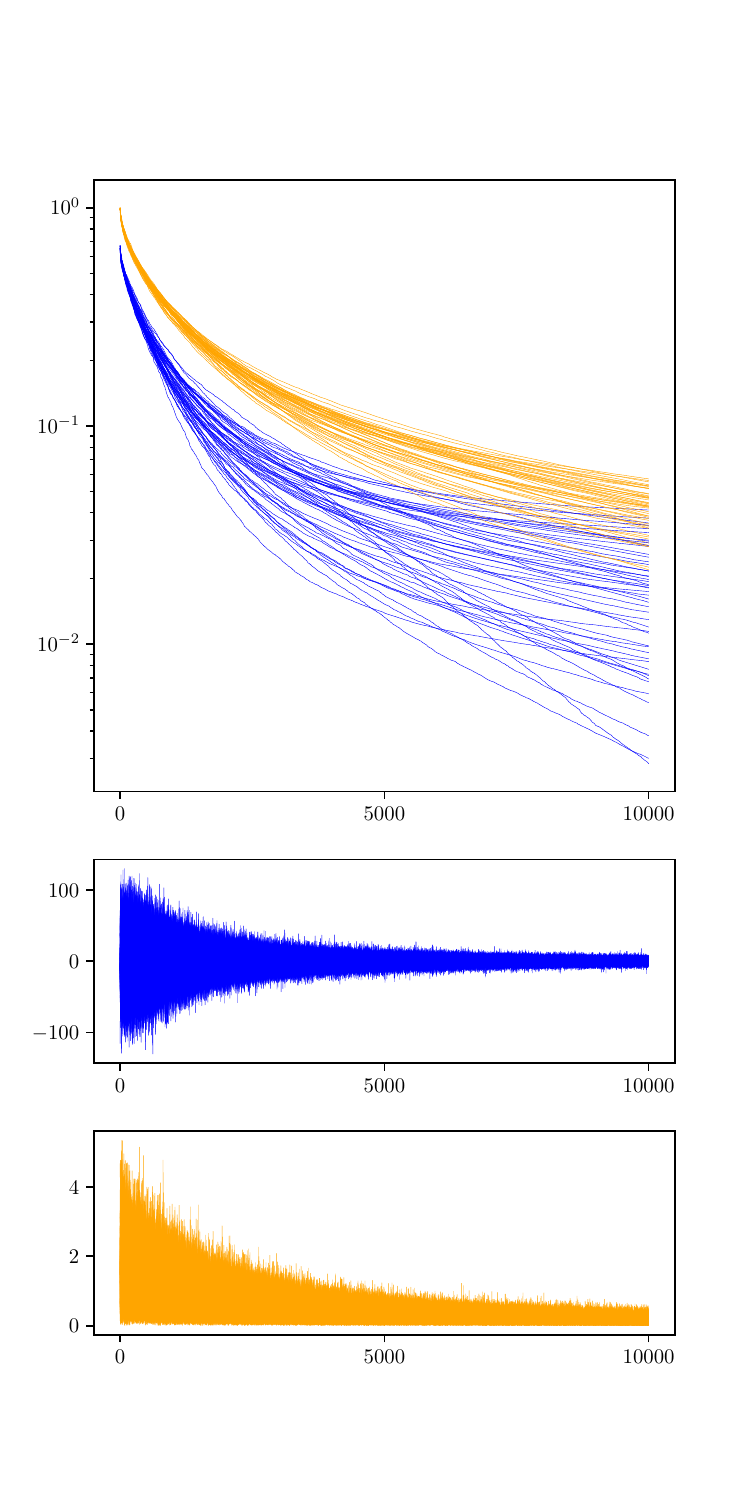}
  \end{tabular}
  \begin{tablenotes}
    \item[I] The parameter \adj{$a_{k}$ from \cite[Eq.~7]{BLSW24}} generated by \cite[Alg.~1]{BLSW24}.
    \item[II] The absolute value of the sum of two of the four parameters \adj{from \eqref{eq:abcd_parameter}} generated by Algorithm~\ref{alg:OpNorm3}.
  \end{tablenotes}
  \end{threeparttable}}
  \caption{Results for 50 runs of \cite[Alg.~1]{BLSW24} (blue)
  and Algorithm~\ref{alg:OpNorm3} (orange) 
  for Gaussian matrices $A$, where $V = 0$
  and $u^k = A v^k / \norm{A v^k}$ for \cite[Alg.~1]{BLSW24},
  of different sizes.}
  \label{fig:experiment2}
\end{figure*}

\subsection{Checking the adjoint mismatch of Radon implementation in Astra}

In a third experiment, 
we aim to demonstrate that the for\-ward and back\-pro\-jec\-tions implemented 
in the Astra toolbox~\cite{van2016fast} 
are indeed adjoint to each other. 
To this end, we consider different projections of discrete \(400 \times 400\) 
images with 40 uniformly distributed angles 
and 400 detector pixels per angle, 
resulting in mappings from \(\mathbb{R}^{400 \times 400}\) 
to \(\mathbb{R}^{40 \times 400}\). 
In our example, we restrict ourselves to parallel beam geometries. 
We denote by \(\mathcal{R}_1\) the projection given by the line model, 
by \(\mathcal{R}_2\) the projection using the ray model
with rays of the width of a detector pixel, 
and by \(\mathcal{R}_3\) the projection using the \emph{Joseph method}.
For \(i = 1, 2, 3\), we describe the corresponding backprojections, 
provided by the command \texttt{create\_backprojection}, 
as \(\mathcal{B}_i\). If \(\mathcal{B}_i^* = \mathcal{R}_i\), 
then Algorithm~\ref{alg:OpNorm3} should always yield a value of zero,
or a very small value due to numerical errors. 
Accordingly,
we apply the algorithm to the pairs \(A = \mathcal{R}_i\) 
and \(V^* = \mathcal{B}_i\) and terminate the iteration after 1,000 repetitions. 
The results are as follows:
\[
\|\mathcal{R}_1 - \mathcal{B}_1^*\| \approx 5.7064 \cdot 10^{-9}, \;
\|\mathcal{R}_2 - \mathcal{B}_2^*\| \approx 1.1471 \cdot 10^{-8}, \;
\|\mathcal{R}_3 - \mathcal{B}_3^*\| \approx 5.4536 \cdot 10^{-9}.
\]
This demonstrates that the forward and back projections, 
are truly adjoint to each other.
The Radon transform is implemented in MATLAB as \texttt{radon} and the back projection can be computed by \texttt{iradon} with the option \texttt{filter="none"}.
As already observed in~\cite{BLSW24}, these operators are not adjoint to each other, and with the method from this paper we can calculate their norm difference.
In the same geometry as for Astra (images of size $400\times 400$, $40$ angles and $400$ pixels on the detector) we get a relative norm difference 
between the true adjoint of \texttt{radon} and \texttt{iradon} with option \texttt{filter="none"}
of at least \(0.1\) (after just $1000$ iterations).

\subsection{Numerical evidence of the convergence rates}

In a fourth experiment,
we aim to verify the convergence speed for the singular vector and value equation 
from Corollary~\ref{cor:conv_rate_a_k}.
To this end,
we run Algorithm~\ref{alg:OpNorm3} 50 times 
for randomly sampled Gaussian matrices $A \in \RR^{50 \times 100}$
and $V^* \in \RR^{100 \times 50}$
and $A \in \RR^{100 \times 50}$
and $V^* \in \RR^{50 \times 100}$.
The results are given in Figure~\ref{fig:experiment4}.
Moreover,
we observe nearly exponential convergence.
We conclude that the non-exponential convergence rate 
from Corollary~\ref{cor:conv_rate_a_k} 
is quite conservative.

\begin{figure}
    \resizebox{\textwidth}{!}{
    \begin{threeparttable}
    \begin{tabular}{c c c}
        & $50\times100$ & $100\times50$ \\
    \rotatebox{90}{\hspace{6pt}$\displaystyle\min_{0 \leq k \leq n} \tfrac{\norm{(A - V)^*(A - V) v^k - \abs{a_k}^2 v^k}}{\norm{A - V}}$} 
    & \includegraphics[width=0.55\textwidth, clip=true, trim=30pt 20pt 40pt 40pt]{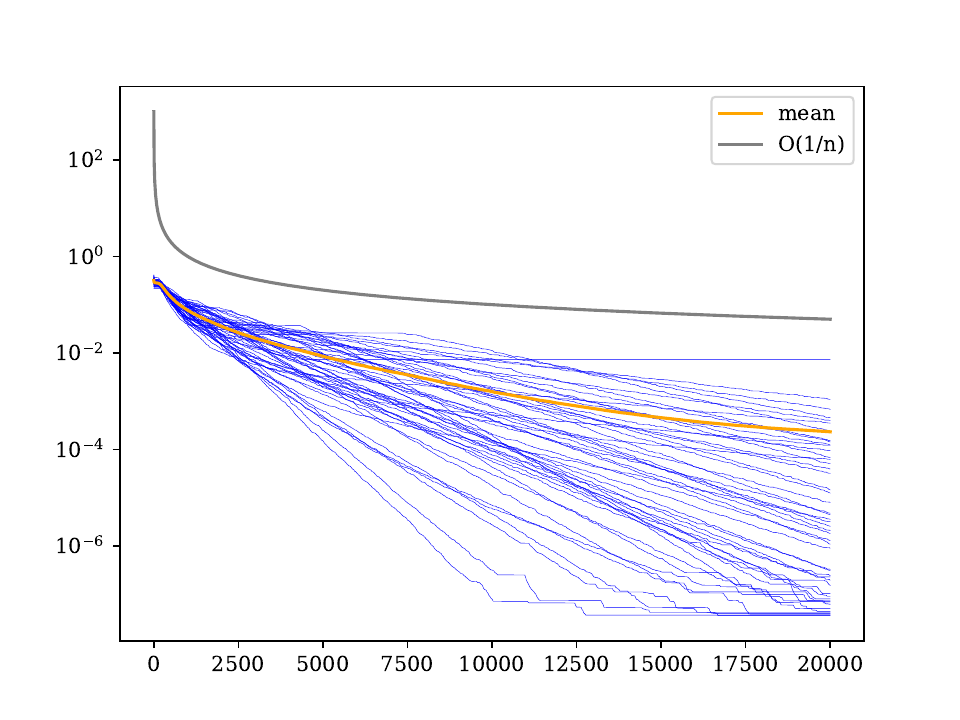}
    & \includegraphics[width=0.55\textwidth, clip=true, trim=30pt 20pt 40pt 40pt]{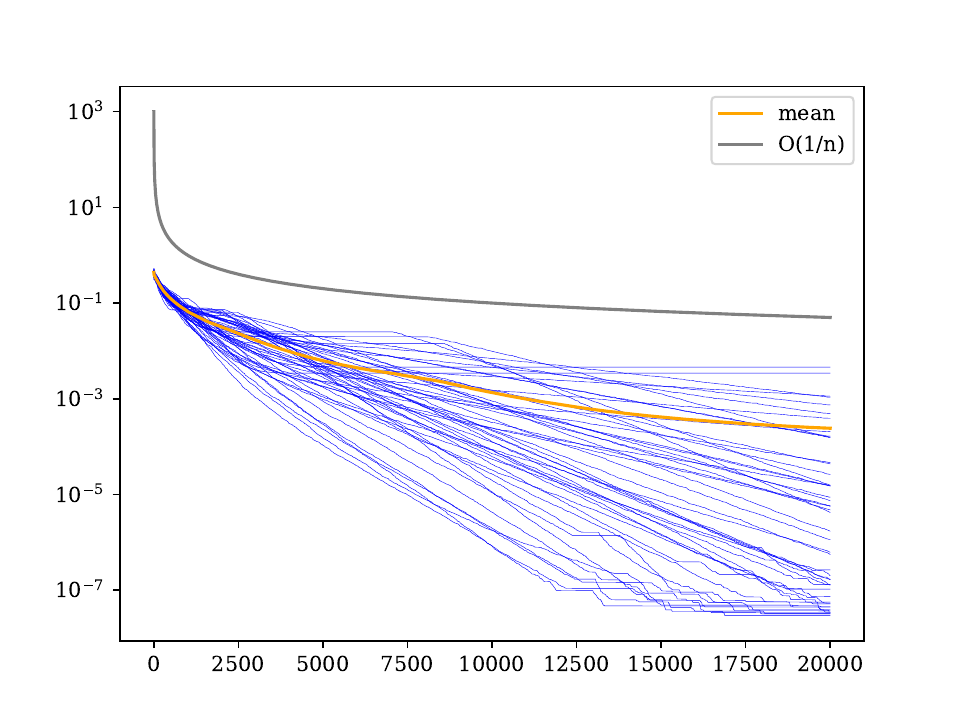}
    \end{tabular}
    \end{threeparttable}}
    \caption{Results for 50 runs of Algorithm~\ref{alg:OpNorm3}
    for Gaussian matrices of different sizes.
    The minimal error within the $n$-th iteration 
    for the corresponding eigenvalue equation 
    is given.}
    \label{fig:experiment4}
\end{figure}

\section{Conclusion and outlook}
\label{sec:conclusion}

The algorithms we developed in this paper allow to compute norm differences of operators based only on evaluations of one of the operators and evaluation of the adjoint of the other operator.
As such, the methods can be applied even if only black-box implementations are available and if the computing environment does not allow to assemble large parts of the matrix representations.
Our methods also produce pairs of right and left singular vectors (for the largest singular value) of the difference of the operators and can be extended to produce more singular vectors for the next few largest singular values. By slight changes our methods could also be modified to calculate the smallest singular value (and respective singular vectors) by focusing on minimization in the line-search instead of maximization. One particular application of our algorithms is to check the adjointness of projector and back projector pairs in computed tomography but also to compute the mismatch between them to make use of the reconstruction methods developed in~\cite{Chouzenout2021pgm-adjoint,Savanier2021ProximalGA,chouzenoux2023convergence,lorenz2023chambolle,naldi2025influence}.

\appendix

\section{Proofs}
\label{sec:appendix-A}

\subsection{Proof of Lemma~\ref{lem:normdifference-dimension-directions}}
\label{sec:proof-normdifference-dimension-directions}

% We rewrite \eqref{eqn:E_sigma_special}
% \[
%     \tilde E_\sigma 
%     = \left\{ \begin{bmatrix} u \\ v \end{bmatrix} \in \RR^{m+d} 
%     \;\middle|\; 
%     \AV \begin{bmatrix} u \\ v \end{bmatrix} = \sigma \begin{bmatrix} u \\ v \end{bmatrix} \right\}.
% \]
From \eqref{eqn:E_sigma_special}
and since \(\dim(\tilde E_\sigma) = r < \max\{d,m\}\), 
we find a matrix \( C \in \RR^{(d + m - r) \times (d + m)} \) 
\adj{with orthonormal basis vectors
such that  $\tilde E_\sigma$ can be expressed by \( \tilde E_\sigma = \{ p \mid C p = 0\}\).
Now,
if it was true that $(u,v)^* + (w,x)^* \in \tilde E_\sigma$,
then it holds 
\begin{align*}
    C (u + w, v + x)^* = 0
    \quad \Leftrightarrow \quad 
    C (w,x)^* = - C(u,v)^*.
\end{align*}
This system of equations in $(w,x)^*$
might have no solution,
such that the set of directions fulfilling $(u+w, v+x)^* \in \tilde E_\sigma$
is empty.
Otherwise,
if $(\tilde w, \tilde x)^*$ is a solution of the latter sytsem of equations,
then the set of all such directions is given by 
\[
    (\tilde w, \tilde x)^* + \ker\left(\left[\begin{smallmatrix}
        C \\
        (u^*,0^*) \\
        (0^*,v^*)
    \end{smallmatrix}\right]\right).
\]
The dimension 
is at most $r$
and of dimension $r-1$ if either $(u,0)^* \in \langle C^*\rangle$
or $(0,v)^* \in \langle C^*\rangle$
and of dimension $r-2$ if both of the latter conditions are not fulfilled and might additionally be empty.
This finishes the proof.}
\qed

% \begin{remark}
%     \adj{Form the latter proof,
%     we observe that the dimension 
%     of the affine space of directions
%     will be reduced 
%     if and only if either $v$ or $u$ is already in the span of the orthogonal complement of the considered singular vector space,
%     i.e. is a linear combination of all singular vectors 
%     except the ones corresponding to the considered singular value.}
% \end{remark}

\subsection{Proof of Corollary~\ref{cor:normdistance-2-no-sing-vectors-during-iteration}}
\label{proof:normdistance-no-sing-vectors-during-iteration}
Let \(\sigma\) be a singular value of \(A-V\) with multiplicity \(r\), 
\adj{and let \(\tilde E_\sigma\) from \eqref{eqn:E_sigma_special} be the corresponding singular vector space.}

If \((u^k + \adj{\xi} w^k, v^k + \adj{\tau} x^k)^*\) lies in \(\tilde E_\sigma\)
\adj{for some stepsizes $\xi ,\tau \in \RR$}, 
then, by Lemma~\ref{lem:normdifference-dimension-directions}, 
the vector \(\hat{p} = (\adj{\xi} w^k, \adj{\tau} x^k)^*\) belongs to an affine subspace \(L\) 
of dimension at most \(r\). 
Thus, 
the set of directions 
\[
    (w^k, x^k) \in \left(\{u^k\}^\perp \cap \sphere^{m-1}\right) \times \left(\{v^k\}^\perp \cap \sphere^{d-1}\right)
\]
that could result in \((u^{k+1}, v^{k+1})^* \in \tilde E_\sigma\) 
is a subset of 
\(\left\{\adj{\sqrt{2}} x/\norm{x} \mid x \in L \right\}\). 
Since \(v^k\) and \(x^k\) as well as \(u^k\) and \(w^k\) 
are not parallel, 
this subset has measure zero with respect to the Haar measure 
on \(\left(\{u^k\}^\perp \cap \sphere^{m-1}\right) \times \left(\{v^k\}^\perp \cap \sphere^{d-1}\right)\), 
according to which \((w^k, x^k)\) is randomly chosen, cf.~Remark~\ref{rem:distribution-x-w-const}, 
if $L$ has dimension less \(\max\{m, d\} - 1\). 
By Lemma~\ref{lem:normdifference-dimension-directions}, 
this is ensured by \(r < \max\{m, d\} - 1\), 
which holds by \adj{Assumption~\ref{ass:assump_2}}. 

Since this applies to each of the finitely many spaces of singular vectors 
to the different (finitely many) singular values, 
the claim follows.
\qed

\subsection{Proof of Lemma~\ref{lem:normdifference-B_Lemma}:}
\label{proof:normdifference-B_Lemma}
\begin{enumerate}[i)]
\item
    This holds because $\scp{u^k}{(A-V)v^k}$ is constructed to increase with $k \in \N$,
    almost surely.
\item
    \adj{According to Proposition~\ref{prop:acc_point_opt_pair_two_stepsizes}, 
    it holds that there exists a subsequence $(u^{k_j}, v^{k_j})$
    and a random variable $\sigma$ with values in the set if singular values of $A - V$
    such that $\textrm{dist}(\tilde E_\sigma, (u^{k_j}, v^{k_j})^*) \to 0$ for $j \to \infty$
    almost surely for some singular value $\sigma$ of $A - V$.
    By (i) and assumption, it is $\sigma = \sigma_1$ almost surely.
    Furthermore,
    the sequence $(\scp{u^k}{(A-V)v^k})_{k \in \NN}$ 
    converges almost surely to the corresponding singular value
    by the monotonicity of $\scp{u^k}{(A-V)v^k}$
    (cf.~Corollary~\ref{cor:monotonicity_abs_ak_in_OpNorm3})
    and assumption, i.e. 
    \begin{align*}
        \scp{u^{k}}{(A-V)v^{k}}
        &> 
        \scp{u^{k_0}}{(A-V)v^{k_0}}
        > \sigma_{2}(A-V)
        \quad \forall k \geq k_0
        \quad \textrm{a.s.}
    \end{align*}
    Therewith the convergence of the objective to the largest singular value follows, 
    almost surely.
    
    Moreover,
    we can decompose $(u^k, v^k)$ into components of the singular vector spaces,
    i.e. eigen spaces of $A - V$,
    with, say $s$, distinct singular values $\{\sigma_i\}_{i = 1}^s$,
    and corresponding orthonormal left and right singular vectors
    with coordinates $\alpha^k, \beta^k \in \sphere^{s-1}$.
    Hence,
    we would have 
    \[
        \sigma_2(A - V)
        < \scp{u^k}{(A - V) v^k}
        = \sum_{i = 1}^s \sigma_i \alpha_i^k \beta_i^k
        \quad \forall k \geq k_0
    \]
    almost surely.
    By the convergence of the enire sequence $\scp{u^k}{(A - V) v^k} \to \sigma_1$ 
    for $k \to \infty$ a.s.,
    we necessarily obtain $\alpha_j^k\beta_j^k \to 0$ for $k \to \infty$ 
    and $j \neq 1$ a.s.,
    such that $\alpha_1^k\beta_1^k \to 1$ for $k \to \infty$ a.s.
    This finishes the prove.}
    \qed
\end{enumerate}

\subsection{Proof of Proposition~\ref{prop:conv_rate_angle}:}
\label{proof:conv_rate_angle}
\adj{From Lemma~\ref{lem:asscent_lem_2} 
we have $c_k^2 + \tau_k(a_kb_k + c_kd_k) = \scp{u^{k+1}}{(A - V) v^{k+1}} - \scp{u^k}{(A - V) v^k}$,
and from Proposition~\ref{prop:one_reduced_step_size} it holds $\tau_k(a_kb_k + c_kd_k) > 0$
such that $c_k^2 \leq \scp{u^{k+1}}{(A - V) v^{k+1}} - \scp{u^k}{(A - V) v^k}$.
Summing up the inequality from $k = 0,...,n$ yields 
\begin{align*}
    \sum_{0 \leq k \leq n}
    b_k^2 
    \leq \scp{u^{n+1}}{(A - V) v^{n+1}} - \scp{u^0}{(A - V) v^0}
    \leq \norm{A - V}
\end{align*}
almost surely.
Analogously,
we have $\sum_{0 \leq k \leq n} c_k^2 \leq \norm{A - V}$.
Combining both inequalities 
and taking the expectation yields 
$(n+1) \min_{0 \leq k \leq n} \EE[b_k^2 + c_k^2] \leq \sum_{0 \leq k \leq n} \EE[b_k^2 + c_k^2] \leq 2\norm{A - V}^2$.}
By Lemma \ref{lem:E_c_sqr} we have
\begin{align}
    \EE[b_k^2 + c_k^2\mid u^k, v^k] 
        & = \tfrac1{m-1} \cdot \norm{(I_m - u^k (u^k)^*)(A-V)v^k}^2\nonumber\\
        & \qquad + \tfrac1{d-1} \cdot \norm{(I_m - v^k (v^k)^*)(A-V)^{*}u^k}^2 
        %& = \tfrac{\norm{(A-V)v^k}^2}{m-1} \cdot \norm{(I_m - u^k (u^k)^*)\tfrac{(A-V)v^k}{\norm{(A-V)v^k}}}^2\nonumber\\
        %& \qquad +  \tfrac{\norm{(A-V)^{*}u^k}^{2}}{d-1} \cdot \norm{(I_m - v^k (v^k)^*)\tfrac{(A-V)^{*}u^k}{\norm{(A-V)^{*}u^{k}}}}^2\nonumber\\
        %& = \tfrac{\norm{(A-V)v^k}^2}{m-1}\cdot\Big(1-\scp{u^k}{\tfrac{(A-V)v^k}{\norm{(A-V)v^k}}}^2\Big)\nonumber\\
        %& \qquad + \tfrac{\norm{(A-V)^{*}u^k}^2}{d-1}\cdot\Big(1-\scp{v^k}{\tfrac{(A-V)^{*}u^k}{\norm{(A-V)^{*}u^k}}}^2\Big).\nonumber
\end{align}
%Using Cauchy-Schwarz and Corollary~\ref{cor:monotonicity_abs_ak_in_OpNorm3} 
%we estimate that
%\begin{align*}
%    \norm{A - V}^2
%    \geq \norm{(A-V)v^k}^2 
%    \geq \scp{u^k}{(A-V)v^k}^2 
%    \geq \scp{u^0}{(A-V)v^0}^2 =: C_0 \adj{> 0},
%\end{align*}
%and similarly $C_0 \leq \norm{(A-V)^{*}u^{k}} ^2\leq \norm{A-V} ^2$.
%This shows for any $k \in \N$ almost surely that
%\begin{align}
%    & \EE[b_k^2 + c_{k}^{2}\mid u^k, v^k] \notag \\
%    & \hspace{0.33cm}
%    \geq C_{0}\Biggl(\tfrac1{m-1}\Bigl(1-\scp{u^k}{\tfrac{(A-V)v^k}{\norm{(A-V)v^k}}}^2\Bigr) 
%    + \tfrac1{d-1}\Bigl(1-\scp{v^k}{\tfrac{(A-V)^{*}u^k}{\norm{(A-V)^{*}u^k}}}^2\Bigr)\Biggr).\label{eq:Eb2c2}
%\end{align}
\adj{
Due to the law of total expectation,
i.e. $\EE[b_k^2 + c_k^2] = \EE[\EE[b_k^2 + c_k^2 \mid u^k, v^k]]$ for any $k \in \NN$
we obtain
\begin{align*}
    \min_{0 \leq k \leq n}
    &\EE[\norm{(I_m - u^k (u^k)^*)(A-V)v^k}^2 + \norm{(I_m - v^k (v^k)^*)(A-V)^{*}u^k}^2] \\
    &\leq 2\tfrac{\max\{m-1, d-1\}\norm{A - V}^2}{n+1}.
\end{align*}
Let $k^*(n)$ be the sequence realizing the minimum in the latter inequality,
we have by Markov’s inequality for any $\varepsilon> 0$ and $n \in \NN$ that 
\begin{align}
    &\PP(\norm{(I_m - u^{k^*(n)} (u^{k^*(n)})^*)(A-V)v^{k^*(n)}}^2 \notag\\
    &\qquad + \norm{(I_m - v^{k^*(n)} (v^{k^*(n)})^*)(A-V)^{*}u^{k^*(n)}}^2 > \varepsilon) \notag\\
    &\quad\leq \tfrac{\EE[\norm{(I_m - u^{k^*(n)} (u^{k^*(n)})^*)(A-V)v^{k^*(n)}}^2 + \norm{(I_m - v^{k^*(n)} (v^{k^*(n)})^*)(A-V)^{*}u^{k^*(n)}}^2]}{\varepsilon} \notag\\
    &\quad\leq 2\tfrac{\max\{m-1, d-1\}\norm{A - V}^2}{(n+1)\varepsilon},
    \label{eq:PE_project}
\end{align}
and hence 
\begin{align*}
    \left.
    \begin{array}{c}
        \min_{0 \leq k \leq n}
        \PP(\norm{(I_m - u^k (u^k)^*)(A-V) v^k}^2 > \varepsilon) \\
        \min_{0 \leq k \leq n}
        \PP(\norm{(I_m - v^k (v^k)^*)(A-V)^{*} u^k}^2 > \varepsilon)
    \end{array}
    \right\}
    \leq 2\tfrac{\max\{m-1, d-1\}\norm{A - V}^2}{(n+1)\varepsilon}.
\end{align*}
Manipulating each summand in \eqref{eq:PE_project} by $\norm{(A - V) v^{k^*(n)}}$,
respectively $\norm{(A - V)^* u^{k^*(n)}}$
and utilizing Lemma~\ref{lem:norm_proj_relation}
for unormalized projected vector,
i.e. for any $v \in \sphere^{d-1}$ and $z \in \RR^d$ 
holds $\norm{(I_d - vv^*) z} = \norm{z} (1 - \scp{v}{z/\norm{z}}^2)$,
we obtain the inequality in \eqref{eq:PE_project} holds equivalently 
for 
\begin{align*}
    &\norm{(A-V)v^{k^*(n)}} \Bigl(1 - \scp{u^{k^*(n)}}{\tfrac{(A-V)v^{k^*(n)}}{\norm{(A-V)v^{k^*(n)}}}}^2\Bigr) \\
    &\quad + \norm{(A-V)^{*}u^{k^*(n)}} \Bigl(1 - \scp{v^{k^*(n)}}{\tfrac{(A-V)^{*}u^{k^*(n)}}{\norm{(A-V)^{*}u^{k^*(n)}}}}^2\Bigr)
    > \varepsilon.
\end{align*}
Using 
\[
    \norm{A - V} \geq \norm{(A - V) v^k} \geq \scp{u^k}{(A - V) v^k} > \scp{u^0}{(A - V) v^0} = a_0 > \kappa_0
\]
with probability $\delta$ and analogously for $\norm{(A - V)^* u^k}$,
we obtain 
\begin{align*}
    &\PP\Bigl(1 \!-\! \scp{u^{k^*(n)}}{\tfrac{(A-V)v^{k^*(n)}}{\norm{(A-V)v^{k^*(n)}}}}^2
    \!\!+\! 1 \!-\! \scp{v^{k^*(n)}}{\tfrac{(A-V)^{*}u^{k^*(n)}}{\norm{(A-V)^{*}u^{k^*(n)}}}}^2 > \tfrac{\varepsilon}{\kappa_0} 
    \Bigl|\Bigr. \abs{a_0} > \kappa_0\Bigr) \\
    &\quad\leq 2 \tfrac{\max\{m-1, d-1\}\norm{A - V}^2}{(n+1)\varepsilon\delta},
\end{align*}
which yields the assertion.
}
\qed{}

\subsection{Proof of Corollary~\ref{cor:conv_rate_a_k}:}
\label{proof:conv_rate_a_k}

\begin{proof}
    By Assumption~\ref{ass:assump_2}
    we have $\norm{(A - V) v^k} > 0$ for any $k \in \N$ almost surely
    and obtain via factorizing
    \begin{align}
        \label{eq:bound_a_k_1}
        a_k
        = \scp{u^k}{(A - V) v^k}
        = \scp{u^k}{\tfrac{(A - V) v^k}{\norm{(A - V) v^k}}}
        \cdot\norm{(A - V) v^k}
    \end{align}
    and analogously (with $\norm{(A - V)^* u^k} > 0$ for any $k \in \N$) we have 
    \begin{align}
        \label{eq:bound_a_k_2}
        a_k
        = \scp{(A - V)^* u^k}{v^k}
        = \scp{v^k}{\tfrac{(A - V)^* u^k}{\norm{(A - V)^* u^k}}}
        \cdot\norm{(A - V)^* u^k}.
    \end{align}
    Combining \eqref{eq:bound_a_k_1}
    and \eqref{eq:bound_a_k_2} yields
    \begin{align*}
        \label{eq:bounds_gamma}
        a_k^2
        & = 
        \scp{u^k}{\tfrac{(A - V) v^k}{\norm{(A - V) v^k}}}
        \cdot \scp{v^k}{\tfrac{(A - V)^* u^k}{\norm{(A - V)^* u^k}}}
        \cdot \lambda_k \\
        & \geq \min\left(\scp{u^k}{\tfrac{(A - V) v^k}{\norm{(A - V) v^k}}},
        \scp{v^k}{\tfrac{(A - V)^* u^k}{\norm{(A - V)^* u^k}}}\right)^2 \lambda_k,
    \end{align*}
    where $\lambda_k$ is defined in Theorem~\ref{thm:conv_rate}.
    From Cauchy-Schwarz's inequality
    and by construction,
    we have $a_k^2 \leq \lambda_k \leq \norm{A - V}^2$.
    Now,
    we can use Lemma~\ref{lem:norm_proj_relation}
    to rewrite the squared minimum 
    and utilize the same argumentation as in \adj{Theorem}~\ref{thm:conv_rate}
    in \eqref{eq:diff_inner_prod_square}
    and \eqref{eq:conv-rate-differences}
    to obtain 
    \begin{align*}
    0 
    \leq \lambda_k - a_k^2 
    & \leq \lambda_k \left(1 - \min\left(\scp{u^k}{\tfrac{(A - V) v^k}{\norm{(A - V) v^k}}},
    \scp{v^k}{\tfrac{(A - V)^* u^k}{\norm{(A - V)^* u^k}}}\right)^2\right) \\
    & \leq \lambda_k \max\left(\norm[\Big]{u^{k} - \tfrac{(A-V)v^{k}}{\norm{(A-V)v^{k}}}}^{2},
    \norm[\Big]{v^{k} - \tfrac{(A-V)^{*}u^{k}}{\norm{(A-V)^{*}u^{k}}}}^{2}\right) \\
    & \leq \norm{A - V}^2 
    \left[\norm[\Big]{u^{k} - \tfrac{(A-V)v^{k}}{\norm{(A-V)v^{k}}}}^{2} + \norm[\Big]{v^{k} - \tfrac{(A-V)^{*}u^{k}}{\norm{(A-V)^{*}u^{k}}}}^{2}\right].
    \end{align*}
    The assertion follows by using the following inequality 
    and utilizing \eqref{eq:conv-rate-differences} and \eqref{eq:inequality_to_prop}:
    \begin{align*}
    \norm{(A - V)^*(A - V) v^k - a_k^2 v_k}
    & \leq \norm{(A - V)^*(A - V) v^k - \lambda_k v_k}
    + (\lambda_k - a_k^2).
    \qedhere
    \end{align*}
\end{proof}

\printbibliography

@misc{BMSS25,
      title={Stochastic Zeroth-Order Method for Computing Generalized Rayleigh Quotients}, 
      author={Jonas Bresch and Oleh Melnyk and Martin Schoen and Gabriele Steidl},
      year={2025},
      note={arXiv:2512.05520},
      archivePrefix={arXiv},
      primaryClass={math.OC},
      doi={10.48550/arXiv.2512.05520}, 
}

@misc{naldi2025influence,
  title={The Influence of an Adjoint Mismatch on the Primal-Dual Douglas-Rachford Method},
  author={Naldi, Emanuele and Schneppe, Felix},
  note={arXiv:2503.24141},
  year={2025},
  doi={10.48550/arXiv.2503.24141},
}

@Book{buzug2008ct,
  address = {Berlin},
  author = {Buzug, Thorsten M.},
  publisher = {Springer},
  title = {Computed tomography from photon statistics to modern cone-beam CT},
  year = 2008,
  doi={10.1007/978-3-540-39408-2}
}

@article{bredies2021convergence,
  title={Convergence analysis of pixel-driven Radon and fanbeam transforms},
  author={Bredies, Kristian and Huber, Richard},
  journal={SIAM J. Numer. Anal.},
  volume={59},
  number={3},
  pages={1399--1432},
  year={2021},
  doi={10.1137/20M1326635}
}

@article{brezinski1998transpose,
  title={Transpose-free {L}anczos-type algorithms for nonsymmetric linear systems},
  author={Brezinski, Claude and Redivo-Zaglia, Michela},
  journal={Numer. Algorithms},
  volume={17},
  number={1},
  pages={67--103},
  year={1998},
  doi={10.1023/A:1012085428800}
}

@article{lorenz2023chambolle,
  title={Chambolle--Pock’s Primal-Dual Me\-thod with Mismatched Adjoint},
  author={Lorenz, Dirk A and Schneppe, Felix},
  journal={Appl. Math. Optim.},
  volume={87},
  number={2},
  pages={22},
  year={2023},
  doi={10.1007/s00245-022-09933-5}
}

@article{lorenz2018randomized,
  title={The randomized {K}aczmarz method with mismatched adjoint},
  author={Lorenz, Dirk A and Rose, Sean and Sch{\"o}pfer, Frank},
  journal={BIT Numer. Math.},
  volume={58},
  number={4},
  pages={1079--1098},
  year={2018},
  doi={10.1007/s10543-018-0717-x}
}

@article{mises1929,
author = {Mises, R. V. and Pollaczek-Geiringer, H.},
title = {Praktische {V}erfahren der {G}lei\-chungsauflösung.},
journal = {ZAMM},
volume = {9},
number = {2},
pages = {152-164},
doi = {10.1002/zamm.19290090206},
year = {1929}
}

@article{lanczos1950,
    author = "Lanczos, Cornelius",
    title = "{An iteration method for the solution of the eigenvalue problem of linear differential and integral operators}",
    doi = "10.6028/jres.045.026",
    journal = "J. Res. Natl. Bur. Stand. B",
    volume = "45",
    pages = "255--282",
    year = "1950"
}

@article{krylov1921,
    author = "Krylov, Aleksey Nikolaevich",
    title = "{On the numerical solution of equations
whose solution determine the frequency of small vibrations of material systems (in russian)}",
    url = "http://mi.mathnet.ru//eng/im5215",
    journal = "Izv. Akad. Nauk.
SSSR Otd Mat. Estest.",
    volume = "4",
    pages = "491--539",
    year = "1931"
}

@misc{boulle2024operator,
  title={Operator learning without the adjoint},
  author={Boull{\'e}, Nicolas and Halikias, Diana and Otto, Samuel E and Townsend, Alex},
  note={arXiv:2401.17739},
  year={2024},
  primaryClass={math.NA},
  archivePrefix={arXiv},
  doi={10.48550/arxiv.2401.17739}
}

@misc{BLSW24,
  title={Matrix-free stochastic calculation of operator norms without using adjoints}, 
  author={Jonas Bresch and Dirk A. Lorenz and Felix Schneppe and Maximilian Winkler},
  year={2024},
  note={arXiv:2410.08297},
  archivePrefix={arXiv},
  primaryClass={math.NA},
  doi={10.48550/arXiv.2410.08297}, 
}

@inproceedings{Savanier2021ProximalGA,
  author={Savanier, Marion and Chouzenoux, Emilie and Pesquet, Jean-Christophe and Riddell, Cyril and Trousset, Yves},
  booktitle={2020 28th European Signal Processing Conference (EUSIPCO)}, 
  title={Proximal Gradient Algorithm in the Presence of Adjoint Mismatch}, 
  year={2021},
  pages={2140-2144},
  keywords={Signal processing algorithms;Tools;Approximation algorithms;Minimization;Image reconstruction;X-ray imaging;Convergence;Proximal gradient algorithm;adjoint mismatch;convergence analysis;fixed point methods;image reconstruction;computed tomography},
  doi={10.23919/Eusipco47968.2020.9287430}
}

@article{Chouzenout2021pgm-adjoint,
    AUTHOR = {Chouzenoux, Emilie and Pesquet, Jean-Christophe and Riddell,
              Cyril and Savanier, Marion and Trousset, Yves},
     TITLE = {Convergence of proximal gradient algorithm in the presence of
              adjoint mismatch},
   JOURNAL = {Inverse Problems},
    VOLUME = {37},
      YEAR = {2021},
    NUMBER = {6},
     PAGES = {Paper No. 065009, 29},
       DOI = {10.1088/1361-6420/abd85c},
}

@article{chouzenoux2023convergence,
  title={Convergence results for primal-dual algorithms in the presence of adjoint mismatch},
  author={Chouzenoux, Emilie and Contreras, Andr{\'e}s and Pesquet, Jean-Christophe and Savanier, Marion},
  journal={SIAM J. Imaging Sci.},
  volume={16},
  number={1},
  pages={1--34},
  year={2023},
  publisher={SIAM},
  doi={10.1137/22M1490223}
}

@article{zeng2000unmatched,
  title={Unmatched projector/back\-pro\-jector pairs in an iterative reconstruction algorithm},
  author={Zeng, Gengsheng L and Gullberg, Grant T},
  journal={	IEEE Trans. Med. Imaging},
  volume={19},
  number={5},
  pages={548--555},
  year={2000},
  publisher={IEEE},
  doi={10.1109/42.870265}
}

@article {sonneveld1989cgs,
    AUTHOR = {Sonneveld, Peter},
     TITLE = {C{GS}, a fast {L}anczos-type solver for nonsymmetric linear
              systems},
   JOURNAL = {SIAM J. Sci. Statist. Comput.},
    VOLUME = {10},
      YEAR = {1989},
    NUMBER = {1},
     PAGES = {36--52},
       DOI = {10.1137/0910004},
}

@article {lorenz2023randomdescent,
    AUTHOR = {Lorenz, Dirk A. and Schneppe, Felix and Tondji, Lionel},
     TITLE = {Linearly convergent adjoint free solution of least squares
              problems by random descent},
   JOURNAL = {Inverse Probl.},
    VOLUME = {39},
      YEAR = {2023},
    NUMBER = {12},
     PAGES = {Paper No. 125019, 25},
  MRNUMBER = {4670680},
       DOI = {10.1088/1361-6420/ad08ed},
}

@article {chan1998transpose,
    AUTHOR = {Chan, Tony F. and de Pillis, Lisette and van der Vorst, Henk},
     TITLE = {Transpose-free formulations of {L}anczos-type methods for
              nonsymmetric linear systems},
   JOURNAL = {Numer. Algorithms},
  FJOURNAL = {Numerical Algorithms},
    VOLUME = {17},
      YEAR = {1998},
    NUMBER = {1-2},
     PAGES = {51--66},
      ISSN = {1017-1398},
   MRCLASS = {65F10},
  MRNUMBER = {1628478},
       DOI = {10.1023/A:1011637511962},
}

@article {freund1993tfqmr,
    AUTHOR = {Freund, Roland W.},
     TITLE = {A transpose-free quasi-minimal residual algorithm for
              non-{H}ermitian linear systems},
   JOURNAL = {SIAM J. Sci. Stat. Comput.},
    VOLUME = {14},
      YEAR = {1993},
    NUMBER = {2},
     PAGES = {470--482},
       DOI = {10.1137/0914029},
}

@misc{magdon2011note,
  title={A Note On Estimating the Spectral Norm of A Matrix Efficiently},
  author={Magdon-Ismail, Malik},
  note={arXiv:1104.2076},
  year={2011},
  archivePrefix={arXiv},
  primaryClass={cs.DS},
  doi={10.48550/arxiv.1104.2076}, 
}

@article{kuczynski1992estimating,
  title={Estimating the largest eigenvalue by the power and {L}anczos algorithms with a random start},
  author={Kuczy{\'n}ski, Jacek and Wo{\'z}niakowski, Henryk},
  journal={SIAM J. Matrix Anal. Appl.},
  volume={13},
  number={4},
  pages={1094--1122},
  year={1992},
  publisher={SIAM},
  doi={doi.org/10.1137/0613066}
}

@article{martinsson2002randomized,
  title={Randomized numerical linear algebra: Foundations and algorithms},
  volume={29}, 
  DOI={10.1017/S0962492920000021},
  journal={Acta Numer.}, 
  author={Martinsson, Per-Gunnar and Tropp, Joel A.},
  year={2020},
  pages={403–572}
}

@article{dudley2022,
AUTHOR = {Dudley, Ethan and Saibaba, Arvind K. and Alexanderian, Alen},
     TITLE = {Monte {C}arlo estimators for the {S}chatten {$p$}-norm of
              symmetric positive semidefinite matrices},
   JOURNAL = {Electron. Trans. Numer. Anal.},
    VOLUME = {55},
      YEAR = {2022},
     PAGES = {213--241},
         DOI = {10.1553/etna\_vol55s213}
}

@article{kenney1998,
AUTHOR = {Kenney, C. S. and Laub, A. J. and Reese, M. S.},
     TITLE = {Statistical condition estimation for linear systems},
   JOURNAL = {SIAM J. Sci. Comput.},
    VOLUME = {19},
      YEAR = {1998},
    NUMBER = {2},
     PAGES = {566--583},
      doi={10.1137/S1064827595282519}
}

@article{van2016fast,
  author = {Wim van Aarle and Willem Jan Palenstijn and Jeroen Cant and Eline Janssens and Folkert Bleichrodt and Andrei Dabravolski and Jan De Beenhouwer and K. Joost Batenburg and Jan Sijbers},
  journal = {Opt. Express},
  number = {22},
  pages = {25129--25147},
  publisher = {Optica Publishing Group},
  title = {Fast and flexible X-ray tomography using the ASTRA toolbox},
  volume = {24},
  year = {2016},
  doi = {10.1364/OE.24.025129}
}

\end{document}